\pdfoutput=1
\documentclass[11pt]{article}
\usepackage[left=1in,right=1in,top=1in,bottom=1in]{geometry}
\usepackage{times}
\usepackage{expl3}
\usepackage{cite}
\usepackage[table]{xcolor}
\usepackage{multirow}
\usepackage{stackengine} 
\usepackage{hhline}
\usepackage{lipsum}
\usepackage{titlesec}
\usepackage[all]{xy}
\usepackage{wrapfig}
\usepackage{enumerate}
\usepackage{epsfig}
\usepackage{tikz-cd}
\usepackage{amsmath}
\usepackage{tabularx}
\usepackage{array}
\usepackage{booktabs}
\usepackage{enumitem}
\usepackage{bbm}
\usepackage{calc}
\usepackage{graphicx}
\usepackage{amsmath}
\usepackage[title]{appendix}
\usepackage{amssymb}
\usepackage{epstopdf}
\usepackage{boldline}
\usepackage{arydshln}
\usepackage{calligra}
\usepackage{bm}
\usepackage{url}
\usepackage{blindtext}
\usepackage{accents}
\usepackage{amsthm}
\usepackage{amscd}

\newtheorem{theorem}{Theorem}
\newtheorem{proposition}{Proposition}
\newtheorem{corollary}{Corollary}
\newtheorem{lemma}{Lemma}
\newtheorem{conjecture}{Conjecture}
\newtheorem{assumption}{Assumption}

\newtheorem{remark}{Remark}
\usepackage{mathtools}
\usepackage{epstopdf}
\usepackage{balance}
\usepackage{thmtools}
\usepackage{thm-restate}
\usepackage{hyperref}
\usepackage{cleveref}
\usepackage[mathscr]{euscript}

\usepackage[ruled,vlined]{algorithm2e}

\newcommand{\ostar}{\mathbin{\mathpalette\make@circled\star}}

\makeatletter
\newcommand{\removelatexerror}{\let\@latex@error\@gobble}
\makeatother
\makeatletter
\newcommand*{\rom}[1]{\expandafter\@slowromancap\romannumeral #1@}
\makeatother

\ExplSyntaxOn
\newcommand\latinabbrev[1]{
  \peek_meaning:NTF . {
    #1\@}%
  { \peek_catcode:NTF a {
      #1.\@ }%
    {#1.\@}}}
\ExplSyntaxOff

\titleclass{\subsubsubsection}{straight}[\subsubsection]

\begin{document}
\vspace{1cm}
\title{Tensor-Lifted Multivariate Functional Calculus Beyond Commutativity and Boundedness}
\vspace{1.8cm}
\author{Shih-Yu~Chang
\thanks{Shih-Yu Chang is with the Department of Applied Data Science,
San Jose State University, San Jose, CA, U. S. A. (e-mail: {\tt
shihyu.chang@sjsu.edu})
}}

\maketitle

\begin{abstract}
Classical functional calculus is fundamentally spectral, capturing only eigenvalue information through resolvent methods. Building on the projector--nilpotent operator characterization developed in our companion paper, the present work extends this paradigm by incorporating algebraic nilpotent structure into a full multivariate functional calculus, yielding a richer calculus that reflects both spectral and geometric features of operators. We develop a general multivariate functional calculus for an arbitrary number of input operators, based on the fundamental projector--nilpotent decomposition. The construction integrates three innovations: (i) explicit inclusion of nilpotent derivative terms in the functional calculus expansion — organized by subsets of the operator indices — capturing generalized eigenspaces and Jordan block structures that classical resolvent methods miss entirely; (ii) tensor lifting to handle non-commuting operators for any number of input arguments, embedding them into a commuting system on a tensor-product space; and (iii) a two-level convergence theory where Level~1 ensures existence of the calculus (strong resolvent convergence implies strong operator topology convergence) and Level~2 provides quantitative stability (norm resolvent convergence implies operator norm convergence with explicit error bounds). The resulting unified compact formula simultaneously handles discrete, continuous, and hybrid spectra, with separate theorems covering bounded arbitrary operators (including non-self-adjoint non-commuting systems), unbounded self-adjoint operators (recovering the classical spectral theorem), and unbounded non-self-adjoint operators with compact resolvent. For the more challenging setting of general unbounded non-self-adjoint operators without compact resolvent, we develop a conditional compactifying regularization technique: adding a small multiple of a positive self-adjoint operator with compact resolvent (e.g., the harmonic oscillator Hamiltonian) regularizes the problem, and under suitable resolvent convergence hypotheses the functional calculus converges to the desired limit. This approach provides a rigorous framework for extending the calculus beyond the compact-resolvent regime, though the explicit convergence of nilpotent derivative terms in this setting remains an open problem. Our framework is compatible with existing functional calculi and reproduces their behavior under corresponding assumptions. To our knowledge, this is the first framework that simultaneously handles non-commutativity, non-self-adjointness, and unboundedness within a single unified framework with convergence guarantees and explicit preservation of nilpotent structure.
\end{abstract}

\tableofcontents

\section{Introduction}

Classical functional calculus is a cornerstone of operator theory. For an operator \(X\) and a holomorphic function \(f\), the standard resolvent-based calculus defines
\[
f(X) = \frac{1}{2\pi i} \oint_{\Gamma} f(z)(zI - X)^{-1} \, dz,
\]
where \(\Gamma\) is an admissible contour surrounding the relevant spectral set. This construction is powerful, stable, and deeply connected to spectral theory. However, it is fundamentally spectral in nature: it describes how \(f\) acts on the eigenvalues and spectral measure of \(X\), but it does not explicitly reveal the deeper algebraic structure associated with generalized eigenspaces, such as Jordan chains and nilpotent components. As a result, the higher-order derivative interactions generated by nilpotent operators remain hidden within the classical spectral framework.

This limitation becomes especially critical for non-self-adjoint and non-normal operators, which have recently gained significant attention in the growing field of non-Hermitian physics \cite{Ashida2020}. Such operators may contain nontrivial Jordan structure even when their spectral values alone appear simple. In the extreme case where an eigenvalue is zero, the nilpotent part may constitute the dominant—or even the entire—operator. In this situation, a purely spectral viewpoint is insufficient because the eigenvalue carries little information, while the nilpotent chain encodes the essential algebraic behavior.

Our previous work~\cite{chang2024operatorC} developed a structural operator characterization based on projectors and nilpotents. In that framework, operators are analyzed not only through their eigenvalues or spectral sets, but through the decomposition of the operator into spectral projectors and nilpotent components. For finite-dimensional operators, this leads to a refined classification of matrices according to shared projector--nilpotent structure. For countable, continuous, and hybrid spectral settings, the same philosophy extends the spectral mapping viewpoint beyond the classical self-adjoint case.

The present paper builds on that foundation. While~\cite{chang2024operatorC} focuses on operator characterization and spectral mapping through projector--nilpotent structure, the present work develops a \textbf{tensor-lifted multivariate functional calculus}. The main goal is to construct a functional calculus that simultaneously handles several operators, including non-commuting, non-self-adjoint, and unbounded operators, while preserving the nilpotent derivative corrections that classical spectral methods do not explicitly retain.

\subsection{The Fundamental Limitation: Classical Calculus Is Purely Spectral}

The guiding philosophy is:
\[
\boxed{
\text{Functional calculus should act on both spectral projectors and nilpotent structure.}
}
\]

Thus, the basic object is not merely the spectrum of \(X\), but the algebraic decomposition
\[
X = \sum_{\lambda} \lambda P(\lambda) + \sum_{\lambda} N(\lambda),
\]
where \(P(\lambda)\) denotes the spectral projector associated with \(\lambda\), and \(N(\lambda)\) denotes the corresponding nilpotent component. These components satisfy the fundamental algebraic relations
\[
P(\lambda)P(\mu) = \delta(\lambda,\mu)P(\lambda),\qquad
P(\lambda)N(\mu) = N(\mu)P(\lambda) = \delta(\lambda,\mu) N(\lambda),\qquad
N(\lambda)^m = 0
\]
for some nilpotency index \(m\). Here \(\delta(\lambda,\mu)\) denotes the Kronecker delta:
\[
\delta(\lambda,\mu)
=
\begin{cases}
1, & \lambda=\mu,\\
0, & \lambda\neq \mu.
\end{cases}
\]
These relations are the algebraic source of the derivative terms appearing in the functional calculus.

For a single operator, these nilpotent terms naturally produce corrections of the form
\[
f'(\lambda)N(\lambda),\qquad \frac{f''(\lambda)}{2!}N(\lambda)^2,\qquad \ldots.
\]
For several operators, the same principle leads to mixed partial derivative terms involving products of nilpotent components from different variables. These terms represent precisely the algebraic contributions that are invisible to a purely semisimple spectral calculus. Classical spectral theory captures only the action of a function on eigenvalues and spectral projectors, treating the operator as if it were completely diagonalizable. As a consequence, the higher-order interactions generated by generalized eigenspaces and nilpotent components are lost. The derivative terms involving powers of \(N(\lambda)\) restore this missing structure, allowing the functional calculus to encode the full Jordan-type algebraic behavior of the operator rather than only its spectral data.

\subsection{Why Tensor Lifting Is a Technical Device}

A central technical difficulty is that classical multivariate functional calculi typically require commutativity. The Taylor joint spectrum, for example, is designed for commuting operator tuples. However, many operator systems arising in non-Hermitian physics, numerical analysis, and non-normal dynamics are naturally non-commuting. To overcome this obstruction, we use \emph{tensor lifting}~\cite{kavruk2011tensor}.

Given operators \(X_1, \ldots, X_r\), we define lifted operators on the tensor-product Hilbert space by
\[
\widetilde{X}_1 = X_1 \otimes I \otimes \cdots \otimes I,\qquad
\widetilde{X}_2 = I \otimes X_2 \otimes \cdots \otimes I,\qquad
\ldots,\qquad
\widetilde{X}_r = I \otimes I \otimes \cdots \otimes X_r.
\]

Although the original operators \(X_1, \ldots, X_r\) may not commute, their tensor liftings commute on the product space. Tensor lifting is therefore a technical device that permits a multivariate calculus to be applied without imposing commutativity on the original operators.

The main contribution, however, is not tensor lifting alone. Tensor lifting without projector--nilpotent structure would still produce only a lifted spectral calculus, discarding the same nilpotent terms that classical methods lose. The novelty of the present work is that the lifted calculus retains the \emph{full projector--nilpotent expansion}, including all derivative terms indexed by nilpotent powers. This produces a unified compact formula that separates the purely spectral contribution from higher-order nilpotent corrections.

\subsection*{Interpretation of the Subset \( A \subseteq \{1, \ldots, r\} \)}

The unified compact formula is organized by subsets
\[
A \subseteq \{1, \ldots, r\}.
\]

The set \( A \) has the following interpretation:

\[
\boxed{
A = \{\, j \in \{1, \ldots, r\} \mid \text{the } j\text{-th operator contributes a nilpotent correction} \,\}.
}
\]

Conversely, indices not in \( A \) contribute only spectral projector terms.

\medskip

\begin{tabular}{|c|c|c|}
\hline
\textbf{Index condition} & \textbf{Contribution type} & \textbf{Factor \( T_j(\lambda_j) \)} \\
\hline
\( j \notin A \) & spectral only & \( d\widetilde{E}_j(\lambda_j) \) \\
\( j \in A \) & nilpotent correction & \( \widetilde{N}_j(\lambda_j)^{q_j} \, d\widetilde{E}_j(\lambda_j) \) \\
\hline
\end{tabular}

\medskip

\noindent
\textbf{Special cases:}

\begin{itemize}
    \item \textbf{\( A = \emptyset \)} (empty set):  
    Every operator contributes only its spectral projector.  
    This term is the \textbf{classical semisimple spectral contribution}.

    \item \textbf{\( A \neq \emptyset \)} (non-empty set):  
    At least one operator contributes a nilpotent correction.  
    These terms are \textbf{nilpotent derivative corrections} that capture generalized eigenspace structure (Jordan blocks).  
    The mixed partial derivative \( \partial_A^{q_A} f \) appears exactly for those variables whose indices lie in \( A \).
\end{itemize}

\medskip

\noindent
\textbf{Example:} For \( r = 3 \), the subset \( A = \{1, 3\} \) means:
\begin{itemize}
    \item Operator \( X_1 \) contributes nilpotent correction,
    \item Operator \( X_2 \) contributes spectral projector only,
    \item Operator \( X_3 \) contributes nilpotent correction.
\end{itemize}

Thus, the subset decomposition provides a transparent mechanism for identifying:
\begin{enumerate}
    \item which operators contribute nilpotent structure (indices in \( A \)), and
    \item which operators contribute only spectral information (indices not in \( A \)).
\end{enumerate}

\subsection{Two-Level Convergence Philosophy: Existence vs. Stability}

A second major contribution of this paper is a \textbf{two-level convergence theory} for finite-dimensional approximations. This is essential because many operators of interest are unbounded or infinite-dimensional. We distinguish two levels of approximation.

\medskip
\noindent
\textbf{Level~1 (existence):} Based on strong resolvent convergence, it guarantees existence of the limiting functional calculus in the strong operator topology:
\[
\operatorname*{s-}\lim_{n\to\infty} f_{\otimes}(X_{1,n}, \ldots, X_{r,n}) = f_{\otimes}(X_1, \ldots, X_r).
\]
This level provides qualitative convergence and justifies the infinite-dimensional calculus.

\medskip
\noindent
\textbf{Level~2 (stability):} Based on norm resolvent convergence, it provides quantitative stability and operator-norm error bounds:
\[
\left\| f_{\otimes}(X_{1,n}, \ldots, X_{r,n}) - f_{\otimes}(X_1, \ldots, X_r) \right\|
\le
C_f \sum_{j=1}^{r} \epsilon_n^{(j)},
\]
where:
\begin{itemize}
    \item \( \epsilon_n^{(j)} = \| (X_{j,n} - X_j)(z_0 I - X_j)^{-1} \| \) is the \textbf{per-operator approximation error} for the \( j \)-th operator, measuring how well \( X_{j,n} \) approximates \( X_j \) in the norm resolvent sense;
    \item \( C_f \) is a \textbf{constant depending only on the analytic function \( f \)} and the contour geometry (e.g., contour lengths and uniform resolvent bounds), and is \textbf{independent of the truncation parameter \( n \)}.
\end{itemize}
This level is crucial for numerical approximation, perturbation analysis, and stable computation of tensor-lifted functional calculi.

This separation clarifies what is guaranteed at each level of regularity and is essential for applications in numerical analysis, operator learning, and spectral approximation.

\subsection{Main Contributions}

The main contributions of this paper are summarized as follows:

\begin{enumerate}
    \item We develop a \textbf{tensor-lifted multivariate functional calculus} for arbitrary finite tuples of operators \(X_1, \ldots, X_r\), without requiring the original operators to commute.

    \item We extend the projector--nilpotent philosophy of~\cite{chang2024operatorC} from operator characterization and spectral mapping to a full multivariate functional calculus, explicitly preserving nilpotent derivative terms.

    \item We derive a \textbf{unified compact formula} (Theorem~\ref{thm:unified_compact_formula}) whose terms are indexed by subsets \(A \subseteq \{1,\ldots,r\}\), separating the classical spectral contribution \(A = \emptyset\) from nilpotent derivative corrections \(A \neq \emptyset\).

    \item We show that the classical spectral theorem, finite-dimensional Jordan calculus, hybrid-spectrum functional calculus, and the Dunford calculus for single operators all appear as special cases of the unified formula.

    \item We establish a \textbf{two-level convergence theory}: Level~1 provides strong operator convergence and existence of the infinite-dimensional calculus, while Level~2 provides norm convergence and explicit quantitative error bounds.

    \item We identify the structural limitations of extending the framework to general unbounded non-self-adjoint operators without compact resolvent, and formulate conditional regularization and open problems for this regime.
\end{enumerate}

To our knowledge, this is the first framework that simultaneously handles non-commutativity, non-self-adjointness, and unboundedness within a single unified framework with convergence guarantees and explicit preservation of nilpotent structure.

\subsection{Organization of the Paper}

The remainder of the paper is organized as follows.

\textbf{Section~\ref{sec:operator_structural_decomposition}} reviews the projector--nilpotent decomposition and recalls the structural framework from~\cite{chang2024operatorC}, extending it to countable infinite-dimensional, continuous-spectrum, and hybrid-spectrum operators.

\textbf{Section~\ref{sec:tensor_lifting}} introduces tensor lifting and explains how non-commuting operator tuples are embedded into commuting lifted systems.

\textbf{Section~\ref{sec:main_unified_formula}} presents the unified compact formula for the tensor-lifted multivariate functional calculus, proves it via multivariate Taylor expansion and binomial expansion on each spectral fiber, and decomposes it into pure spectral, mixed, and full nilpotent terms.

\textbf{Section~\ref{sec:two_level_convergence}} develops the two-level convergence theory: Level~1 (strong resolvent convergence $\Rightarrow$ strong operator topology convergence) ensures existence; Level~2 (norm resolvent convergence $\Rightarrow$ operator norm convergence with explicit error bounds) provides quantitative stability.

\textbf{Section~\ref{sec:special_cases}} specializes the unified formula to important operator classes: bounded operators (discrete and continuous spectrum), unbounded self-adjoint/normal operators (recovering the classical spectral theorem), unbounded non-self-adjoint operators with compact resolvent, and general unbounded non-self-adjoint operators without compact resolvent (identifying open problems).

\textbf{Section~\ref{sec:examples}} provides concrete examples illustrating the framework, including finite-dimensional non-commuting matrices, the quantum harmonic oscillator, the anharmonic oscillator, the complex harmonic oscillator (non-self-adjoint), and hybrid-spectrum Schr\"odinger operators.

\textbf{Section~\ref{sec:discussion}} compares the present framework with existing functional calculi (Taylor, Weyl, Colombo--Sabadini--Struppa), summarizing what each method cannot do and highlighting the unique capabilities of our approach. The section concludes with open problems and directions for future research.

\section{Operator Structural Decomposition}\label{sec:operator_structural_decomposition}

The foundation of our multivariate functional calculus is the decomposition of operators into spectral (diagonalizable) and nilpotent components. Unlike the classical resolvent approach, which only captures analytic spectral information, this decomposition explicitly reveals the algebraic nilpotent structure—Jordan blocks, generalized eigenspaces, and their interactions—that is essential for handling non-diagonalizable and non-self-adjoint operators. In this section, we review the finite-dimensional Jordan decomposition, extend it to countable infinite-dimensional matrices, and then summarize the spectral operator decomposition for continuous and hybrid spectra, following the framework of Dunford~\cite{dunford1958survey} and Kantorovitz~\cite{kantorovitz1965jordan}. The multiplication rules among projectors and nilpotents derived here will serve as the algebraic backbone for the functional calculus developed in subsequent sections.

\subsection{Jordan Decomposition for Finite-Dimensional Operators (Core Foundation)}

Let $\bm{X} \in \mathbb{C}^{m \times m}$ be a finite-dimensional operator. 
A Jordan block associated with an eigenvalue $\lambda$ of size $m \times m$ is defined by
\[
\bm{J}_m(\lambda) = \lambda \bm{I}_m + \bm{N}_m,
\]
where $\bm{N}_m$ is a nilpotent matrix satisfying $\bm{N}_m^m = \bm{0}$, with ones on the super-diagonal and zeros elsewhere.

The Jordan decomposition theorem asserts that $\bm{X}$ admits a decomposition of the form
\[
\bm{X} = \bm{U} \left( \bigoplus_{k=1}^{K} \bigoplus_{i=1}^{\alpha_k^{(\mathrm{G})}} \bm{J}_{m_{k,i}}(\lambda_k) \right) \bm{U}^{-1},
\]
where $\bm{U}$ is an invertible matrix, $\{\lambda_k\}_{k=1}^K$ are the distinct eigenvalues of $\bm{X}$, and $\alpha_k^{(\mathrm{G})}$ denotes the geometric multiplicity of $\lambda_k$ (the dimension of the eigenspace $\operatorname{Null}(\bm{X} - \lambda_k \bm{I})$). 
For each eigenvalue $\lambda_k$, the positive integer $m_{k,i}$ specifies the size of the $i$-th Jordan block associated with $\lambda_k$; it also determines the nilpotency index, i.e., $\bm{N}_{k,i}^{m_{k,i}} = \bm{0}$.

Equivalently, this representation can be reformulated in terms of projector–nilpotent components. 
For each eigenvalue $\lambda_k$ and each Jordan block index $i$, there exist a spectral projector $\bm{P}_{k,i}$ and an associated nilpotent operator $\bm{N}_{k,i}$ such that
\[
\bm{X} = \sum_{k=1}^{K} \sum_{i=1}^{\alpha_k^{(\mathrm{G})}} \left( \lambda_k \bm{P}_{k,i} + \bm{N}_{k,i} \right).
\]

These components satisfy the algebraic relations
\[
\bm{P}_{k,i} \bm{P}_{k',i'} = \delta(k,k') \delta(i,i') \bm{P}_{k,i}, 
\qquad
\bm{P}_{k,i} \bm{N}_{k,i} = \bm{N}_{k,i} \bm{P}_{k,i} = \bm{N}_{k,i},
\]
and for each pair $(k,i)$,
\[
\bm{N}_{k,i}^{\,m_{k,i}} = \bm{0}.
\]

This decomposition separates spectral and algebraic contributions:
the projectors $\bm{P}_{k,i}$ capture the spectral data (eigenvalues and eigenspaces), while the nilpotents $\bm{N}_{k,i}$ encode the generalized eigenstructure (Jordan chains) with nilpotency degree $m_{k,i}$. 
These nilpotent components will play a central role in the functional calculus developed in later sections, as they generate the derivative correction terms that classical resolvent-based methods cannot capture.

\subsection{Extension to Countable Infinite-Dimensional Operators}

The projector–nilpotent decomposition admits a natural extension from the
finite-dimensional setting to countable infinite-dimensional operators.

\begin{lemma}\label{lma: countable infinite decomposition}
Let $\bm{X}$ be a countable infinite-dimensional operator acting on a separable
Hilbert space $\mathcal{H}$, represented as a square matrix $\bm{X} \in \mathbb{C}^{\infty \times \infty}$
with discrete spectrum and finite algebraic multiplicities. Then $\bm{X}$ admits a
decomposition of the form
\[
\bm{X} = \sum_{k=1}^{\infty} \sum_{i=1}^{\alpha_k^{(\mathrm{G})}}
\left( \lambda_k \bm{P}_{k,i} + \bm{N}_{k,i} \right),
\]
where $\bm{P}_{k,i}$ are spectral projectors, $\bm{N}_{k,i}$ are nilpotent operators,
and the sums converge in the strong operator topology.
\end{lemma}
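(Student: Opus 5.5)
The plan is to build the decomposition one isolated eigenvalue at a time with Riesz projections, use the finite-dimensional Jordan theorem on each generalized eigenspace, and then sum the pieces. Since the spectrum $\{\lambda_k\}$ is discrete, each $\lambda_k$ is isolated. So we can pick a small circle $\Gamma_k$ around $\lambda_k$ that encloses no other spectral point and define the Riesz projector
\[
\bm{P}_k = \frac{1}{2\pi i}\oint_{\Gamma_k} (z\bm{I}-\bm{X})^{-1}\,dz .
\]
This is a bounded idempotent that commutes with $\bm{X}$. The resolvent identity gives $\bm{P}_k\bm{P}_{k'} = \delta(k,k')\bm{P}_k$. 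Finite algebraic multiplicity means $\mathcal{H}_k := \operatorname{Ran}\bm{P}_k$ is finite-dimensional. The restriction $\bm{X}|_{\mathcal{H}_k}$ has spectrum $\{\lambda_k\}$, so $\bm{X}|_{\mathcal{H}_k} - \lambda_k \bm{I}$ is nilpotent.

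Next, apply the finite-dimensional Jordan decomposition from the previous subsection to $\bm{X}|_{\mathcal{H}_k}$. This splits $\mathcal{H}_k$ into Jordan-block subspaces indexed by $i = 1,\dots,\alpha_k^{(\mathrm{G})}$. Compose the block projectors inside $\mathcal{H}_k$ with $\bm{P}_k$ to get projectors $\bm{P}_{k,i}$ on $\mathcal{H}$. Set $\bm{N}_{k,i} := (\bm{X}-\lambda_k\bm{I})\bm{P}_{k,i}$. The algebraic relations $\bm{P}_{k,i}\bm{P}_{k',i'} = \delta\delta\,\bm{P}_{k,i}$, $\bm{P}_{k,i}\bm{N}_{k,i} = \bm{N}_{k,i}\bm{P}_{k,i} = \bm{N}_{k,i}$ and $\bm{N}_{k,i}^{m_{k,i}} = 0$ then carry over from the finite-dimensional case. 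We also get $\bm{X}\bm{P}_k = \sum_i(\lambda_k\bm{P}_{k,i}+\bm{N}_{k,i})$ on each fiber.

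The final step is to show that $\sum_k \bm{P}_k = \bm{I}$ strongly, i.e.\ that the generalized eigenvectors are complete and the partial sums converge. This is the main obstacle. It is false for general operators with discrete spectrum: there are compact quasinilpotent operators, and non-normal operators whose root vectors are complete but not a basis. So I would read the hypothesis "represented as a matrix with discrete spectrum" as including the structural assumption that $\bm{X}$ is a spectral operator in the sense of Dunford~\cite{dunford1958survey}, or equivalently that its root subspaces form an unconditional (Riesz) basis with uniformly bounded partial-sum projectors $Q_K := \sum_{k\le K}\bm{P}_k$.

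Under that assumption the convergence argument is as follows. Completeness gives $Q_K x \to x$ on the dense span of the $\mathcal{H}_k$. The uniform bound $\sup_K\|Q_K\| < \infty$ then extends this to all of $\mathcal{H}$ by an $\varepsilon/3$ argument. For $x$ in the domain of $\bm{X}$ we have $\bm{X}Q_K x = Q_K \bm{X} x \to \bm{X}x$. This yields the stated series for $\bm{X}$ in the strong topology (on $\mathrm{Dom}(\bm{X})$ if $\bm{X}$ is unbounded).

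If that assumption is not available, I would first try to derive the uniform bound from a separation condition: eigenvalues $\lambda_k$ separated with $\sup_k\|\bm{P}_k\|<\infty$, using a Bari/Kato-type quadratic-closeness argument relative to an orthonormal basis. I expect this verification to be the genuinely nontrivial part, and I will state it explicitly as the hypothesis under which the lemma holds.
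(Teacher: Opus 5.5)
Your argument is correct under the hypothesis you make explicit, and it follows a different route from the paper's. The paper does not build the projectors from the operator. It invokes a ``Jordan decomposition for countable infinite-dimensional matrices,'' $\bm{X} = \bm{U}\bigl(\bigoplus_{k,i}\bm{J}_{m_{k,i}}(\lambda_k)\bigr)\bm{U}^{-1}$, then sets $\bm{P}_{k,i} = \bm{U}\bm{I}_{k,i}\bm{U}^{-1}$ and $\bm{N}_{k,i} = \bm{U}\dot{\bm{J}}_{k,i}\bm{U}^{-1}$. Every relation, including $\sum_{k,i}\bm{P}_{k,i} = \bm{U}\bm{I}\bm{U}^{-1} = \bm{I}$, is then read off by conjugation.

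No such general theorem exists. For the strong-topology claims to make sense, $\bm{U}$ must be bounded with bounded inverse. The existence of such a $\bm{U}$ is exactly the statement that the Jordan-chain subspaces form a Riesz basis of subspaces, which is a stronger form of the hypothesis you isolate. So the paper's proof is the special case of yours in which completeness and the basis property are packaged into $\bm{U}$.

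\textbf{What each route gives.} The conjugation route is short and gets unconditional convergence for free: the orthogonal coordinate projections $\bm{I}_{k,i}$ sum unconditionally, and conjugation preserves this. Your route has four advantages.
\begin{itemize}
\item It identifies $\bm{P}_k=\sum_i\bm{P}_{k,i}$ canonically as Riesz projections.
\item It needs only $\sup_K\|Q_K\|<\infty$ in the given ordering, i.e.\ a Schauder rather than an unconditional decomposition.
\item It handles $\operatorname{Dom}(\bm{X})$ explicitly via $\bm{X}Q_Kx=Q_K\bm{X}x$.
\item It shows exactly where the statement can fail.
\end{itemize}

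\textbf{Your caveat is necessary.} The lemma as literally stated is false, and it can only hold nonvacuously for unbounded $\bm{X}$. A bounded operator on an infinite-dimensional space cannot have spectrum consisting only of isolated eigenvalues of finite algebraic multiplicity. A compact set of isolated points is finite, and the finitely many finite-rank Riesz projections would then sum to $\bm{I}$. So your domain qualification is essential, not optional.

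\textbf{A correction to your counterexamples.} A compact quasinilpotent operator does not itself satisfy the hypotheses, because $0$ then has infinite algebraic multiplicity. The clean incompleteness example is $\bm{X}=V^{-1}$ with $V$ the Volterra operator. It has compact resolvent and empty spectrum, so the right-hand side of the lemma is an empty sum while $\bm{X}\neq 0$.

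\textbf{The paper's own example is a counterexample.} For ``complete but not a basis,'' the complex harmonic oscillator $-\frac{d^2}{dx^2}+ix^2$ of Section~\ref{sec:examples} is a well-known example of Davies. Its eigenvalues are algebraically simple and its eigenfunctions are complete, but $\|\bm{P}_k\|$ grows exponentially. Since $\bm{P}_k(z_0\bm{I}-\bm{X})^{-1}=(z_0-\lambda_k)^{-1}\bm{P}_k$, uniform boundedness gives some $x\in\operatorname{Dom}(\bm{X})$ for which the terms $\lambda_k\bm{P}_kx$ are unbounded. So the lemma fails for an operator the paper itself treats.
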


\begin{proof}
By the Jordan decomposition for countable infinite-dimensional matrices,
$\bm{X}$ can be expressed as
\[
\bm{X} = \bm{U} \left( \bigoplus_{k=1}^{\infty} \bigoplus_{i=1}^{\alpha_k^{(\mathrm{G})}} \bm{J}_{m_{k,i}}(\lambda_k) \right) \bm{U}^{-1},
\]
where $\bm{U} \in \mathbb{C}^{\infty \times \infty}$ is invertible, $\lambda_k$ are
distinct eigenvalues, $\alpha_k^{(\mathrm{G})}$ is the geometric multiplicity,
and each Jordan block $\bm{J}_{m_{k,i}}(\lambda_k)$ has size $m_{k,i} \times m_{k,i}$.

Each Jordan block can be written as
\[
\bm{J}_{m_{k,i}}(\lambda_k) = \lambda_k \bm{I}_{m_{k,i}} + \dot{\bm{J}}_{m_{k,i}},
\]
where $\dot{\bm{J}}_{m_{k,i}}$ is the nilpotent matrix with ones on the super-diagonal
and zeros elsewhere, satisfying $\dot{\bm{J}}_{m_{k,i}}^{\,m_{k,i}} = \bm{0}$.

For each eigenvalue $\lambda_k$ and each Jordan block index $i$, construct the
infinite-dimensional block matrices:
\[
\bm{I}_{k,i} = \bigoplus_{k'=1}^{\infty} \bigoplus_{i'=1}^{\alpha_{k'}^{(\mathrm{G})}} \delta(k,k') \delta(i',i) \bm{I}_{m_{k',i'}},
\qquad
\dot{\bm{J}}_{k,i} = \bigoplus_{k'=1}^{\infty} \bigoplus_{i'=1}^{\alpha_{k'}^{(\mathrm{G})}} \delta(k,k') \delta(i',i) \dot{\bm{J}}_{m_{k',i'}}.
\]

Define the spectral projector and nilpotent operator via similarity transformation:
\[
\bm{P}_{k,i} = \bm{U} \bm{I}_{k,i} \bm{U}^{-1}, \qquad
\bm{N}_{k,i} = \bm{U} \dot{\bm{J}}_{k,i} \bm{U}^{-1}.
\]

Substituting these into the Jordan decomposition yields the desired form:
\[
\bm{X} = \sum_{k=1}^{\infty} \sum_{i=1}^{\alpha_k^{(\mathrm{G})}} \left( \lambda_k \bm{P}_{k,i} + \bm{N}_{k,i} \right).
\]

To verify that $\bm{P}_{k,i}$ are projectors, observe that
\[
\bm{P}_{k,i} \bm{P}_{k',i'} = \bm{U} \bm{I}_{k,i} \bm{U}^{-1} \bm{U} \bm{I}_{k',i'} \bm{U}^{-1}
= \bm{U} \left( \bm{I}_{k,i} \bm{I}_{k',i'} \right) \bm{U}^{-1}
= \bm{U} \left( \delta(k,k') \delta(i,i') \bm{I}_{k,i} \right) \bm{U}^{-1}
= \delta(k,k') \delta(i,i') \bm{P}_{k,i}.
\]
Moreover,
\[
\sum_{k=1}^{\infty} \sum_{i=1}^{\alpha_k^{(\mathrm{G})}} \bm{P}_{k,i}
= \bm{U} \left( \sum_{k=1}^{\infty} \sum_{i=1}^{\alpha_k^{(\mathrm{G})}} \bm{I}_{k,i} \right) \bm{U}^{-1}
= \bm{U} \bm{I} \bm{U}^{-1} = \bm{I}.
\]

For nilpotency, since $\dot{\bm{J}}_{k,i}^{\,m_{k,i}} = \bm{0}$, we have
\[
\bm{N}_{k,i}^{\,m_{k,i}} = \bm{U} \dot{\bm{J}}_{k,i}^{\,m_{k,i}} \bm{U}^{-1} = \bm{0},
\]
while $\bm{N}_{k,i}^{\ell} \neq \bm{0}$ for $\ell < m_{k,i}$. The mixing relations
$\bm{P}_{k,i} \bm{N}_{k,i} = \bm{N}_{k,i} \bm{P}_{k,i} = \bm{N}_{k,i}$ follow directly
from the block-diagonal structure. This completes the proof.
\end{proof}

The algebraic relations satisfied by the projector and nilpotent components
remain identical to those in the finite-dimensional case:
\[
\bm{P}_{k,i} \bm{P}_{k',i'} = \delta(k,k') \delta(i,i') \bm{P}_{k,i}, 
\qquad
\bm{P}_{k,i} \bm{N}_{k,i} = \bm{N}_{k,i} \bm{P}_{k,i} = \bm{N}_{k,i},
\qquad
\bm{N}_{k,i}^{\,m_{k,i}} = \bm{0}.
\]

The essential distinction from the finite-dimensional setting lies in the
appearance of infinite summations. As a result, additional analytic conditions
(such as convergence in the strong operator topology) are required to ensure
that the decomposition is well-defined. This formulation should be interpreted
as a formal algebraic decomposition, whose analytic validity depends on
appropriate convergence conditions.

This extension provides the infinite-dimensional prototype for the functional
calculus developed in subsequent sections, where convergence issues will be
addressed via the two-level framework (strong resolvent convergence implying
strong operator topology convergence, and norm resolvent convergence providing
quantitative bounds).

\subsection{Extension to Continuous Spectrum Operators (Spectral Operators)}

The projector–nilpotent framework can be further extended to operators
with continuous spectrum by generalizing the classical spectral
representation. The following decomposition is adapted from
Theorem~8 in our companion paper~\cite{chang2024operatorC}.

\begin{theorem}[Spectral Operator Decomposition]\label{thm: spectral operator decomposition}
Let $\bm{X}$ be a spectral operator with spectrum $\sigma(\bm{X})$. Then $\bm{X}$ admits
a decomposition of the form
\[
\bm{X} = \int_{\lambda \in \sigma(\bm{X})} \lambda \, d\bm{E}_{\bm{X}}(\lambda)
\;+\;
\int_{\lambda \in \sigma(\bm{X})} (\bm{X} - \lambda \bm{I})\, d\bm{E}_{\bm{X}}(\lambda),
\]
where $\bm{E}_{\bm{X}}(\lambda)$ is the spectral measure associated with $\bm{X}$.
\end{theorem}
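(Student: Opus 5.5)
The approach is to reduce the statement to the Dunford canonical decomposition of a spectral operator. By Dunford's theorem \cite{dunford1958survey}, a (bounded) spectral operator $\bm{X}$ with resolution of the identity $\bm{E}_{\bm{X}}$ admits a unique decomposition $\bm{X} = \bm{S} + \bm{N}$. Here $\bm{S} = \int_{\sigma(\bm{X})} \lambda\, d\bm{E}_{\bm{X}}(\lambda)$ is the scalar-type part, and $\bm{N}$ is a quasinilpotent operator commuting with every $\bm{E}_{\bm{X}}(\delta)$. The first integral in the statement is then exactly $\bm{S}$, defined as a norm limit of Riemann--Stieltjes sums $\sum_j \lambda_j \bm{E}_{\bm{X}}(\delta_j)$ over finite Borel partitions $\{\delta_j\}$ of $\sigma(\bm{X})$. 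These sums converge because $\bm{E}_{\bm{X}}$ is uniformly bounded, with $\sup_\delta \|\bm{E}_{\bm{X}}(\delta)\| < \infty$. So the whole task is to show that the second integral equals $\bm{N} = \bm{X} - \bm{S}$.

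For the second integral I will give $\int (\bm{X} - \lambda \bm{I})\, d\bm{E}_{\bm{X}}(\lambda)$ the natural meaning: the limit of the sums $\sum_j (\bm{X} - \lambda_j \bm{I}) \bm{E}_{\bm{X}}(\delta_j)$ with $\lambda_j \in \delta_j$, as the mesh tends to zero. The key steps are as follows.
\begin{enumerate}
\item Since $\bm{E}_{\bm{X}}$ is countably additive with $\bm{E}_{\bm{X}}(\sigma(\bm{X})) = \bm{I}$, each sum splits as $\bm{X}\sum_j \bm{E}_{\bm{X}}(\delta_j) - \sum_j \lambda_j \bm{E}_{\bm{X}}(\delta_j) = \bm{X} - \sum_j \lambda_j \bm{E}_{\bm{X}}(\delta_j)$.
\item Passing to the limit, the second piece tends to $\bm{S}$ in norm. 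Hence the Riemann sums for the second integral converge in norm to $\bm{X} - \bm{S} = \bm{N}$.
\item Adding the two integrals then gives back $\bm{X}$, which is the claimed identity.
\end{enumerate}
It is also worth recording the interpretation used later. Because $\bm{X}$ commutes with each $\bm{E}_{\bm{X}}(\delta)$, the operator $(\bm{X} - \lambda \bm{I})\bm{E}_{\bm{X}}(\delta)$ is the restriction of $\bm{X} - \lambda$ to the spectral subspace $\bm{E}_{\bm{X}}(\delta)\mathcal{H}$. On that subspace $\sigma(\bm{X}|_{\delta}) \subseteq \overline{\delta}$. This identifies the second integral as the quasinilpotent ``continuous analogue'' of $\sum \bm{N}_{k,i}$ from Lemma~\ref{lma: countable infinite decomposition}.

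The main obstacle is analytic rather than algebraic: the algebra closes almost tautologically once both integrals are defined. The delicate points are three. First, the scalar integral must exist; this requires uniform boundedness of $\bm{E}_{\bm{X}}$, which is part of the definition of a spectral operator. Second, the operator-valued integrand $\bm{X} - \lambda\bm{I}$ must be integrated against $\bm{E}_{\bm{X}}$; here I rely on commutativity of $\bm{X}$ with $\bm{E}_{\bm{X}}$ to make the sums independent of ordering. Third, the unbounded case is harder. There I would restrict to bounded Borel sets $\delta_n \uparrow \sigma(\bm{X})$, apply the bounded argument on $\bm{E}_{\bm{X}}(\delta_n)\mathcal{H}$, where $\bm{X}$ is bounded, and pass to a strong limit on the core $\bigcup_n \bm{E}_{\bm{X}}(\delta_n)\mathcal{H}$. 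This follows the treatment in \cite{dunford1958survey} and Theorem~8 of \cite{chang2024operatorC}, which I take as given.
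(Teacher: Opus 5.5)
Your argument is correct and matches what the paper relies on. The paper gives no proof of its own here; it imports the statement from Theorem~8 of the companion paper and Dunford's theory of spectral operators, and the content of that theory is exactly the canonical decomposition $\bm{X}=\bm{S}+\bm{N}$ you invoke. As you observe, once the second integral is read as the norm limit of $\sum_j(\bm{X}-\lambda_j\bm{I})\bm{E}_{\bm{X}}(\delta_j)=\bm{X}-\sum_j\lambda_j\bm{E}_{\bm{X}}(\delta_j)$, the identity follows from finite additivity and $\bm{E}_{\bm{X}}(\sigma(\bm{X}))=\bm{I}$; the only real input is existence of $\int\lambda\,d\bm{E}_{\bm{X}}(\lambda)$, which comes from uniform boundedness of $\bm{E}_{\bm{X}}$ and compactness of $\sigma(\bm{X})$, and your unbounded extension, like the paper's, rests on the cited Dunford theory rather than being proved.
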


The first term represents the spectral (diagonalizable) component,
while the second term captures the deviation from diagonalizability,
serving as a continuous-spectrum analogue of the nilpotent structure
in finite-dimensional settings.

More precisely, the operator $(\bm{X} - \lambda \bm{I}) d\bm{E}_{\bm{X}}(\lambda)$ plays the
role of a \emph{local nilpotent component} on each spectral fiber, extending
the projector–nilpotent decomposition to the continuous case. These components
satisfy algebraic relations analogous to the finite-dimensional case:
\[
d\bm{E}_{\bm{X}}(\lambda) \, d\bm{E}_{\bm{X}}(\lambda') = d\bm{E}_{\bm{X}}(\lambda) \, \delta(\lambda, \lambda'),
\qquad
(\bm{X} - \lambda \bm{I}) d\bm{E}_{\bm{X}}(\lambda) = d\bm{E}_{\bm{X}}(\lambda) (\bm{X} - \lambda \bm{I}),
\]
and for each fixed $\lambda$, the operator $(\bm{X} - \lambda \bm{I}) d\bm{E}_{\bm{X}}(\lambda)$ is
nilpotent in the sense that its $q$-th power may be nonzero for $q < m_\lambda$
and vanishes for $q \ge m_\lambda$, where $m_\lambda$ is the nilpotency index at $\lambda$.

\begin{remark}
In the self-adjoint case, the operator admits a purely spectral
representation, and the second term vanishes in the sense that
\[
(\bm{X} - \lambda \bm{I})\, d\bm{E}_{\bm{X}}(\lambda) = 0.
\]
Consequently, the decomposition reduces to the classical spectral
theorem:
\[
\bm{X} = \int_{\lambda \in \sigma(\bm{X})} \lambda \, d\bm{E}_{\bm{X}}(\lambda).
\]

For non-self-adjoint spectral operators, however, the second term is
generally nontrivial and encodes additional algebraic structure beyond
the spectrum. The above representation should therefore be interpreted
in a generalized sense, consistent with the theory of spectral operators
(see \cite{dunford1958survey,kantorovitz1965jordan}), where both spectral
and nilpotent-like components coexist. This formulation thus provides the
infinite-dimensional analogue of the finite-dimensional projector–nilpotent
decomposition, forming the foundation for the functional calculus developed
in subsequent sections.
\end{remark}

\subsection{Extension to Hybrid Spectrum Operators}

The projector–nilpotent decomposition can be further generalized to
operators whose spectrum consists of both discrete and continuous
components. Such operators are referred to as hybrid spectrum operators.

\begin{theorem}[Hybrid Spectrum Decomposition~{\cite[Theorem~12]{chang2024operatorC}}]
Let $\bm{X}$ be an operator whose spectrum can be decomposed as
\[
\sigma(\bm{X}) = \sigma_d(\bm{X}) \cup \sigma_c(\bm{X}),
\]
where $\sigma_d(\bm{X})$ denotes the discrete spectrum (isolated eigenvalues
with finite algebraic multiplicities) and $\sigma_c(\bm{X})$ denotes the
continuous spectrum. Then $\bm{X}$ admits a decomposition of the form
\[
\bm{X}
=
\sum_{\lambda_k \in \sigma_d(\bm{X})}
\sum_{i=1}^{\alpha_k^{(\mathrm{G})}}
\left( \lambda_k \bm{P}_{k,i} + \bm{N}_{k,i} \right)
\;+\;
\int_{\lambda \in \sigma_c(\bm{X})} \lambda \, d\bm{E}_{\bm{X}}(\lambda)
\;+\;
\int_{\lambda \in \sigma_c(\bm{X})} (\bm{X} - \lambda \bm{I})\, d\bm{E}_{\bm{X}}(\lambda),
\]
where:
\begin{itemize}
    \item $\bm{P}_{k,i}$ and $\bm{N}_{k,i}$ are the spectral projector and nilpotent components associated with the discrete eigenvalue $\lambda_k$;
    \item $\alpha_k^{(\mathrm{G})}$ is the geometric multiplicity of $\lambda_k$;
    \item $\bm{E}_{\bm{X}}(\lambda)$ is the spectral measure corresponding to the continuous spectrum.
\end{itemize}
\end{theorem}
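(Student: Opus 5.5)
The plan is to split the Hilbert space along the discrete/continuous dichotomy of the spectrum and then apply the two decompositions already available on each piece. First, set $P_d := \sum_{\lambda_k \in \sigma_d(\bm{X})} \sum_{i} \bm{P}_{k,i}$, the total Riesz projector onto the span of the generalized eigenspaces of the isolated eigenvalues of finite algebraic multiplicity, and $P_c := \bm{I} - P_d$. For each isolated $\lambda_k$ the Riesz projector
\[
\bm{P}_k = \frac{1}{2\pi i}\oint_{|z-\lambda_k|=\varepsilon_k} (z\bm{I}-\bm{X})^{-1}\,dz
\]
has finite rank and commutes with $\bm{X}$. The restriction $\bm{X}|_{\operatorname{Ran}\bm{P}_k}$ is a finite matrix, so the finite-dimensional Jordan decomposition refines $\bm{P}_k$ into the block projectors $\bm{P}_{k,i}$ and nilpotents $\bm{N}_{k,i}$. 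The orthogonality relations $\bm{P}_k\bm{P}_{k'} = \delta(k,k')\bm{P}_k$ come from the resolvent identity. When $\sigma_d$ is countable, the sum defining $P_d$ converges strongly by Lemma~\ref{lma: countable infinite decomposition}, applied to $\bm{X}|_{\operatorname{Ran}P_d}$.

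Second, since $P_d$ and $P_c$ commute with $\bm{X}$, we have the direct sum $\bm{X} = \bm{X}P_d + \bm{X}P_c$. The first summand equals $\sum_{k,i}(\lambda_k\bm{P}_{k,i}+\bm{N}_{k,i})$ by the previous step. For the second summand, I would regard $\bm{X}_c := \bm{X}|_{\operatorname{Ran}P_c}$ as a spectral operator whose spectrum is contained in $\sigma_c(\bm{X})$, with spectral measure $\bm{E}_{\bm{X}}(\cdot)$ supported on $\sigma_c$. Here I would take $\bm{E}_{\bm{X}}(\sigma_c(\bm{X})) = P_c$ and $\bm{E}_{\bm{X}}(\{\lambda_k\})=\bm{P}_k$, which is consistent with the spectral-operator framework of \cite{dunford1958survey}. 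Theorem~\ref{thm: spectral operator decomposition}, applied to $\bm{X}_c$, then gives
\[
\bm{X}P_c=\int_{\sigma_c}\lambda\,d\bm{E}_{\bm{X}}(\lambda)+\int_{\sigma_c}(\bm{X}-\lambda\bm{I})\,d\bm{E}_{\bm{X}}(\lambda).
\]
Adding the two pieces yields the stated formula.

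The main obstacle is justifying the splitting itself, namely that the complementary part $\bm{X}P_c$ really carries a spectral measure supported on $\sigma_c$ with no residual contribution from the isolated points. When $\sigma_d$ accumulates only on $\sigma_c$, one must ensure that $\bm{E}_{\bm{X}}$ puts no mass on $\sigma_d$ beyond the $\bm{P}_k$, and that the strong sum defining $P_d$ is bounded, which requires a uniform bound on $\sup_k\|\bm{P}_k\|$ or a Riesz-basis-type hypothesis. I would handle this by working within the standing assumption that $\bm{X}$ is a spectral operator in Dunford's sense. Then $\bm{E}_{\bm{X}}$ is a bounded countably additive projection-valued measure, so $\bm{E}_{\bm{X}}(\sigma_d)=\sum_k \bm{E}_{\bm{X}}(\{\lambda_k\})$ converges strongly with uniform bounds. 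The atoms $\bm{E}_{\bm{X}}(\{\lambda_k\})$ coincide with the Riesz projectors because both are the unique commuting projections reducing $\bm{X}$ with spectrum $\{\lambda_k\}$. On the atoms, $(\bm{X}-\lambda_k\bm{I})\bm{E}(\{\lambda_k\})=\sum_i\bm{N}_{k,i}$, so the general integral formula splits exactly into the discrete sum plus the integral over $\sigma_c$.
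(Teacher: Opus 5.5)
The paper does not prove this statement. It is imported from the companion paper (Theorem~12 there), and the surrounding text only remarks that it combines the projector--nilpotent structure at the discrete eigenvalues with the spectral-integral representation of Theorem~\ref{thm: spectral operator decomposition} for the continuous part. Your proposal is exactly that combination, and your last paragraph turns it into a correct proof once $\bm{X}$ is assumed to be a spectral operator in Dunford's sense.

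You were right to make this assumption explicit, because the statement as literally written (``Let $\bm{X}$ be an operator'') is false without it. The complex harmonic oscillator $-d^2/dx^2+ix^2$ of Section~\ref{sec:examples} satisfies the hypotheses with $\sigma_c(\bm{X})=\emptyset$ and $0\in\rho(\bm{X})$. However, its Riesz projectors $\bm{P}_k$ have exponentially growing norms (a result of Davies and Kuijlaars). Suppose $\bm{X}u=\sum_k(\lambda_k\bm{P}_k+\bm{N}_k)u=\sum_k\bm{P}_k\bm{X}u$ held for all $u\in\mathcal{D}(\bm{X})$. Since $\bm{X}$ maps $\mathcal{D}(\bm{X})$ onto $\mathcal{H}$, the partial sums of $\sum_k\bm{P}_kv$ would then converge for every $v$, and the uniform boundedness principle would force $\sup_k\|\bm{P}_k\|<\infty$, a contradiction.

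Under the Dunford hypothesis, your argument supplies exactly what the paper's sketch omits. First, $\bm{E}_{\bm{X}}(\sigma_d)=\sum_k\bm{E}_{\bm{X}}(\{\lambda_k\})$ strongly by countable additivity; $\sigma_d$ is automatically countable, being a discrete subset of $\mathbb{C}$. Second, each atom $\bm{E}_{\bm{X}}(\{\lambda_k\})$ is the Riesz projector. State the uniqueness you use precisely: a bounded projection $Q$ commuting with $\bm{X}$ with $\sigma(\bm{X}|_{\operatorname{Ran}Q})\subseteq\{\lambda_k\}$ and $\sigma(\bm{X}|_{\operatorname{Ker}Q})\subseteq\sigma(\bm{X})\setminus\{\lambda_k\}$ equals the Riesz projector. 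The second condition holds for $\bm{E}_{\bm{X}}(\{\lambda_k\})$ because $\sigma(\bm{X})\setminus\{\lambda_k\}$ is closed when $\lambda_k$ is isolated.

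Note also that under this hypothesis the continuous half is definitional. The only available meaning of $\int_\delta(\bm{X}-\lambda\bm{I})\,d\bm{E}_{\bm{X}}(\lambda)$ is $\bm{X}\bm{E}_{\bm{X}}(\delta)-\int_\delta\lambda\,d\bm{E}_{\bm{X}}(\lambda)$, so the two $\sigma_c$-integrals add up to $\bm{X}\bm{E}_{\bm{X}}(\sigma_c)$. The substance of the theorem is therefore the discrete part, which you handle correctly.

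Three details need repair.

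\textbf{The appeal to Lemma~\ref{lma: countable infinite decomposition}.} Drop it from your first paragraph. Its proof takes as given a bounded, boundedly invertible $\bm{U}$ conjugating $\bm{X}$ to a direct sum of Jordan blocks. That already entails the convergence of $\sum_k\bm{P}_k$ you are after, and it fails for the example above. Applying the lemma to $\bm{X}|_{\operatorname{Ran}P_d}$ also presupposes that $P_d$ exists. The convergence has to come from countable additivity of $\bm{E}_{\bm{X}}$, as in your final paragraph.

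\textbf{Bounding $\sup_k\|\bm{P}_k\|$ would not suffice.} The coefficient projections of a conditional basis are uniformly bounded, yet their sum does not converge unconditionally, and the sum over $\sigma_d$ in the statement carries no order. What is needed is a uniform bound on all finite sums $\sum_{k\in F}\bm{P}_k$, which is exactly what a bounded countably additive resolution of the identity provides.

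\textbf{The unbounded case.} If $\bm{X}$ is unbounded, read the identity $\bm{X}=\bm{X}P_d+\bm{X}P_c$ and the strong sums on $\mathcal{D}(\bm{X})$, using $\bm{P}_k\bm{X}\subseteq\bm{X}\bm{P}_k$. For $u\in\mathcal{D}(\bm{X})$ this gives $\sum_k\bm{X}\bm{P}_ku=\sum_k\bm{P}_k\bm{X}u=\bm{E}_{\bm{X}}(\sigma_d)\bm{X}u=\bm{X}P_du$.
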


This decomposition combines the finite-dimensional projector–nilpotent
structure for discrete eigenvalues with the spectral integral representation
for continuous components. The first summation captures the discrete part
of the spectrum, while the integral terms describe the continuous contribution.

In particular, the operator $(\bm{X} - \lambda \bm{I})\, d\bm{E}_{\bm{X}}(\lambda)$ serves as a
\emph{continuous-spectrum analogue of the nilpotent component}, encoding
non-diagonalizable behavior within each spectral fiber. These components
satisfy algebraic relations that generalize the finite-dimensional case:
\[
\bm{P}_{k,i} \bm{P}_{k',i'} = \delta(k,k') \delta(i,i')\bm{P}_{k,i}, \qquad
d\bm{E}_{\bm{X}}(\lambda) \, d\bm{E}_{\bm{X}}(\lambda') = d\bm{E}_{\bm{X}}(\lambda) \, \delta(\lambda, \lambda'),
\]
\[
\bm{P}_{k,i} \, d\bm{E}_{\bm{X}}(\lambda) = d\bm{E}_{\bm{X}}(\lambda) \, \bm{P}_{k,i} = 0,
\]
and similarly for the nilpotent components, reflecting the mutual orthogonality
between discrete and continuous spectral subspaces.

\begin{remark}
If $\bm{X}$ is self-adjoint, then the nilpotent contributions vanish in both
the discrete and continuous parts, and the decomposition reduces to the
classical spectral theorem over the entire spectrum:
\[
\bm{X} = \int_{\lambda \in \sigma(\bm{X})} \lambda \, d\bm{E}_{\bm{X}}(\lambda).
\]
For general non-self-adjoint operators, however, both discrete nilpotent
terms and continuous nilpotent-like components may coexist, providing a
unified representation that extends the classical spectral framework.
This hybrid formulation unifies the discrete and continuous cases,
serving as the most general form of the projector–nilpotent decomposition
and forming the basis for the multivariate functional calculus developed
in subsequent sections.
\end{remark}

\subsection{Why This Decomposition Enables a Richer Calculus}

Classical functional calculus is typically constructed via the resolvent
operator
\[
(zI - X)^{-1},
\]
which is analytic in the spectral parameter $z$ outside the spectrum
$\sigma(X)$. This analyticity forms the basis of the Dunford integral and
related resolvent-based calculi.

However, such approaches fundamentally encode only spectral information.
In particular, the resolvent depends solely on the location of the spectrum
and does not explicitly distinguish between diagonalizable and
non-diagonalizable components of the operator. As a consequence,
algebraic features such as nilpotent (Jordan) structure are not directly
visible in the resolvent representation.

To see this concretely, consider a Jordan block $J_m(\lambda) = \lambda I + N$
with $N^m = 0$. Its resolvent takes the form
\[
(zI - J_m(\lambda))^{-1} = \frac{1}{z - \lambda} \sum_{k=0}^{m-1} \frac{N^k}{(z - \lambda)^k},
\]
which contains higher-order poles that encode the nilpotent structure.
Upon contour integration via the Dunford integral
\[
f(J_m(\lambda)) = \frac{1}{2\pi i} \oint_{\Gamma} f(z) (zI - J_m(\lambda))^{-1} \, dz,
\]
the residue theorem extracts only the coefficient of $(z - \lambda)^{-1}$,
yielding $f(\lambda)$ while discarding the contributions from $N, N^2, \dots, N^{m-1}$.
Thus, the classical resolvent method captures only the spectral term,
missing the nilpotent derivative corrections entirely.

In contrast, the projector--nilpotent decomposition separates the operator
into two distinct parts:
\[
X = \underbrace{\sum \lambda P}_{\text{spectral part}} + \underbrace{\sum N}_{\text{nilpotent part}},
\]
where the spectral component captures eigenvalues or spectral measures,
and the nilpotent component encodes deviations from diagonalizability.

This separation allows the functional calculus to act differently on each
component. In particular, on the generalized eigenspace associated with an
eigenvalue $\lambda$, the nilpotent part generates derivative correction terms
through the identity
\[
f(\lambda P + N) = f(\lambda) P + \sum_{q=1}^{m-1} \frac{f^{(q)}(\lambda)}{q!} N^q,
\]
or equivalently, on the spectral subspace where $P$ acts as the identity,
\[
f(\lambda I + N) = \sum_{q=0}^{m-1} \frac{f^{(q)}(\lambda)}{q!} N^q,
\]
with the understanding that $N^0 = I$. These identities follow from the
binomial expansion of $(\lambda P + N)^\ell$ and the orthogonality relations
among projectors. These derivative terms have no analogue in purely
resolvent-based frameworks.

Therefore, the projector–nilpotent decomposition provides a strictly
richer representation of operator structure, enabling a functional calculus
that simultaneously captures:
\begin{itemize}
    \item Spectral information (eigenvalues, spectral measures, projectors);
    \item Algebraic information (nilpotent parts, generalized eigenspaces, Jordan chains);
    \item Derivative correction terms that are essential for non-diagonalizable operators.
\end{itemize}

This enhanced structure becomes essential in the multivariate setting,
where interactions between different operators and their nilpotent
components produce higher-order mixed derivative terms that are
inaccessible to resolvent-based methods. In summary, classical functional
calculus is inherently spectral, while the present framework is both
spectral and algebraic.

\section{Tensor Lifting for Non-Commutative Operators}\label{sec:tensor_lifting}

The multivariate functional calculus developed in the preceding sections
assumes, implicitly or explicitly, that the input operators commute.
When the operators $X_1, \ldots, X_r$ do not commute, direct application
of the projector–nilpotent decomposition becomes problematic because
the spectral projections and nilpotent components from different operators
may not align or commute. To overcome this obstacle, we introduce a
technical device: \emph{tensor lifting}.

The idea is simple yet powerful. Instead of working with the original
non-commuting operators on their respective Hilbert spaces, we embed
each operator into a common tensor product space where they act on
different tensor factors. In this lifted setting, the operators automatically
commute, regardless of whether the original ones did. The functional
calculus is then applied to the lifted commuting tuple, and the result
is interpreted as the desired multivariate function of the original
non-commuting operators.

It is important to emphasize that tensor lifting is purely a technical
device; it is not the main contribution of this work. The core innovation
remains the incorporation of nilpotent structure into the functional
calculus itself. Tensor lifting simply extends the applicability of this
calculus to non-commuting inputs without modifying the underlying
projector–nilpotent framework.

The section is organized as follows. Subsection~\ref{subsec:tensor_lifting_device}
discusses the role of tensor lifting as a technical tool. 
Subsection~\ref{subsec:lifting_construction} presents the explicit
construction for $r$ operators. Subsection~\ref{subsec:commutativity_lifting}
demonstrates the key commutativity property. 
Subsection~\ref{subsec:tensor_lifting_step0} defines the lifted calculus.
Finally, Subsection~\ref{subsec:lifted_decomposition} shows how the
projector–nilpotent decomposition lifts to the tensor product space.

\subsection{Tensor Lifting as a Technical Device}
\label{subsec:tensor_lifting_device}

Tensor lifting serves as a technical device to enable application of the
multivariate calculus to non-commuting operators. It is not the main
contribution; the main contribution is the incorporation of nilpotent
structure into the functional calculus itself.

\subsection{The Lifting Construction for $r$ Operators}
\label{subsec:lifting_construction}

Let
\[
X_j : \mathcal{D}(X_j) \subseteq \mathcal{H}_j \to \mathcal{H}_j,
\qquad j = 1, \dots, r,
\]
be (possibly unbounded) densely defined operators on Hilbert spaces
$\mathcal{H}_1, \dots, \mathcal{H}_r$.
Define the tensor-product Hilbert space
\[
\mathcal{H}_{\otimes}
:=
\mathcal{H}_1 \otimes \mathcal{H}_2 \otimes \cdots \otimes \mathcal{H}_r .
\]

For each $j = 1, \dots, r$, define the lifted operator
\[
\widetilde{X}_j
:=
I_1 \otimes \cdots \otimes I_{j-1}
\otimes X_j
\otimes I_{j+1} \otimes \cdots \otimes I_r,
\]
where $I_k$ denotes the identity operator on $\mathcal{H}_k$.

Explicitly,
\[
\widetilde{X}_1 = X_1 \otimes I_2 \otimes I_3 \otimes \cdots \otimes I_r,
\]
\[
\widetilde{X}_2 = I_1 \otimes X_2 \otimes I_3 \otimes \cdots \otimes I_r,
\]
\[
\vdots
\]
\[
\widetilde{X}_r = I_1 \otimes I_2 \otimes \cdots \otimes I_{r-1} \otimes X_r.
\]

The natural domain of $\widetilde{X}_j$ is
\[
\mathcal{D}(\widetilde{X}_j)
=
\mathcal{H}_1 \otimes \cdots \otimes
\mathcal{D}(X_j)
\otimes \cdots \otimes \mathcal{H}_r,
\]
initially defined on elementary tensors and extended by linearity and closure.

For an elementary tensor
\[
u = u_1 \otimes u_2 \otimes \cdots \otimes u_r \in \mathcal{H}_{\otimes},
\]
the lifted operator acts componentwise:
\[
\widetilde{X}_j(u)
=
u_1 \otimes \cdots \otimes
(X_j u_j)
\otimes \cdots \otimes u_r.
\]

The key feature of the lifting construction is that the operators
$\widetilde{X}_1, \dots, \widetilde{X}_r$ pairwise commute on their common domain:
\[
\widetilde{X}_i \widetilde{X}_j
=
\widetilde{X}_j \widetilde{X}_i,
\qquad
1 \le i, j \le r.
\]
Indeed, each lifted operator acts nontrivially only on a single tensor factor.

Consequently, multivariate functional calculus may be applied to the commuting family
$(\widetilde{X}_1, \dots, \widetilde{X}_r)$,
even when the original operators $X_1, \dots, X_r$ act on different Hilbert spaces or
fail to admit a direct joint functional calculus.

\subsection{Commutativity via Tensor Lifting}
\label{subsec:commutativity_lifting}

A fundamental feature of the tensor lifting construction is that the lifted
operators commute automatically on the tensor-product Hilbert space, even if
the original operators do not commute or act on different Hilbert spaces.

Let
\[
\widetilde{X}_i
=
I_1 \otimes \cdots \otimes I_{i-1}
\otimes X_i
\otimes I_{i+1} \otimes \cdots \otimes I_r,
\]
and
\[
\widetilde{X}_j
=
I_1 \otimes \cdots \otimes I_{j-1}
\otimes X_j
\otimes I_{j+1} \otimes \cdots \otimes I_r,
\]
for $i \neq j$.

Since $\widetilde{X}_i$ and $\widetilde{X}_j$ act nontrivially on different
tensor factors, their actions are independent. For an elementary tensor
\[
u = u_1 \otimes \cdots \otimes u_r \in \mathcal{H}_{\otimes},
\]
we have
\[
\widetilde{X}_i \widetilde{X}_j(u)
=
u_1 \otimes \cdots \otimes
(X_i u_i)
\otimes \cdots \otimes
(X_j u_j)
\otimes \cdots \otimes u_r,
\]
which is identical to
\[
\widetilde{X}_j \widetilde{X}_i(u).
\]
Hence,
\[
\widetilde{X}_i \widetilde{X}_j = \widetilde{X}_j \widetilde{X}_i,
\qquad
1 \le i, j \le r,
\]
on the common domain of definition. Equivalently,
\[
[\widetilde{X}_i, \widetilde{X}_j] = \widetilde{X}_i \widetilde{X}_j - \widetilde{X}_j \widetilde{X}_i = 0.
\]

This commutativity is entirely structural and does not depend on any algebraic
relations among the original operators $X_1, \dots, X_r$. In particular,
even if $[X_i, X_j] \neq 0$ in the sense of operator commutation on a common
domain (or in an algebraic sense), the lifted operators still commute because
they act on distinct tensor coordinates.

Therefore, the tensor lifting construction converts an arbitrary collection of
operators into a commuting family on a larger tensor-product space. This allows
the application of multivariate functional calculus, joint spectral theory,
and multivariate holomorphic functional methods to the lifted system
$(\widetilde{X}_1, \dots, \widetilde{X}_r)$.

\begin{remark}[Conceptual Interpretation]
From a conceptual viewpoint, tensor lifting separates ``operator interaction''
from ``coordinate action.'' The original operators may exhibit strong
noncommutative behavior, while their lifted counterparts become commuting
coordinate operators on independent tensor directions. This separation is
precisely what enables the construction of a multivariate functional calculus
for non-commuting operators.
\end{remark}

\subsection{Step 0: Tensor Lifting First --- Enabling Non-Commutativity}
\label{subsec:tensor_lifting_step0}

A central difficulty in multivariate functional calculus is that the classical
joint holomorphic functional calculus requires the underlying operators to
commute. In many applications, however, the operators
\[
X_1, \dots, X_r
\]
may fail to commute, may act on different Hilbert spaces, or may not admit a
well-defined joint spectrum in the classical sense.

To overcome this obstruction, we first apply the tensor lifting construction
introduced in Sections~\ref{subsec:lifting_construction} and
\ref{subsec:commutativity_lifting}. Let
\[
\mathcal{H}_{\otimes} = \mathcal{H}_1 \otimes \cdots \otimes \mathcal{H}_r
\]
and define the lifted operators
\[
\widetilde{X}_j = I_1 \otimes \cdots \otimes I_{j-1} \otimes X_j \otimes I_{j+1} \otimes \cdots \otimes I_r,
\qquad j = 1, \dots, r.
\]

By construction, the lifted family $(\widetilde{X}_1, \dots, \widetilde{X}_r)$
is pairwise commuting:
\[
[\widetilde{X}_i, \widetilde{X}_j] = 0, \qquad 1 \le i, j \le r.
\]
Consequently, the classical multivariate holomorphic functional calculus
becomes applicable to the lifted tuple.

We therefore define the tensor-lifted multivariate functional calculus by
\[
\boxed{
f_{\otimes}(X_1, \dots, X_r) \;:=\; f(\widetilde{X}_1, \dots, \widetilde{X}_r)
},
\]
where the right-hand side is interpreted using the joint holomorphic
functional calculus for commuting operators.

More explicitly, if $f(z_1, \dots, z_r)$ is holomorphic on a neighborhood of the
joint spectrum $\sigma(\widetilde{X}_1, \dots, \widetilde{X}_r)$, then
\[
f_{\otimes}(X_1, \dots, X_r)
=
\frac{1}{(2\pi i)^r}
\int_{\Gamma_1} \cdots \int_{\Gamma_r}
f(z_1, \dots, z_r)
\prod_{j=1}^r (z_j I - \widetilde{X}_j)^{-1}
\, dz_1 \cdots dz_r,
\]
where each contour $\Gamma_j$ surrounds $\sigma(\widetilde{X}_j)$.

\begin{remark}[Structural Interpretation]
The tensor lifting step should be viewed as a structural preprocessing stage:
instead of attempting to directly define a functional calculus for a
noncommuting family, we first embed the operators into independent tensor
coordinates where commutativity becomes automatic. The functional calculus is
then performed on the lifted commuting system.

This construction separates the analytic difficulty of defining
$f(X_1, \dots, X_r)$ from the algebraic obstruction caused by
noncommutativity, thereby providing a systematic framework for extending
multivariate functional calculus beyond the commuting setting.
\end{remark}

\begin{remark}[Relevance to Nilpotent Structure]
While tensor lifting handles the commutativity obstruction, it does not by
itself address the nilpotent structure captured by the projector-nilpotent
decomposition. The lifted calculus defined above must be combined with the
explicit inclusion of nilpotent derivative terms (see Sections~\ref{sec:unified_formula}
and~\ref{subsec:lifted_decomposition}) to fully capture the algebraic features
of non-diagonalizable operators. Tensor lifting enables the application of
multivariate methods; the projector-nilpotent decomposition enriches those
methods with geometric content.
\end{remark}

\subsection{Step 1: Lifted Decomposition}
\label{subsec:lifted_decomposition}

After tensor lifting, each operator admits a lifted projector–nilpotent decomposition
on the tensor-product Hilbert space
\[
\mathcal{H}_{\otimes} = \mathcal{H}_1 \otimes \cdots \otimes \mathcal{H}_r.
\]

Recall from Section~\ref{sec:operator_structural_decomposition} that each operator
$X_j$ admits the decomposition
\[
X_j = \int_{\sigma(X_j)} \lambda_j \, dE_{X_j}(\lambda_j)
+ \int_{\sigma(X_j)} N_j(\lambda_j) \, dE_{X_j}(\lambda_j),
\]
where $dE_{X_j}(\lambda_j)$ denotes the spectral projection measure and
$N_j(\lambda_j)$ denotes the associated nilpotent component at spectral value
$\lambda_j$. In the discrete spectrum case, the integrals reduce to sums over
eigenvalues with projectors $P_{X_j}(\lambda_j)$.

Applying the tensor lifting construction yields the lifted operators
\[
\widetilde{X}_j = I_1 \otimes \cdots \otimes I_{j-1} \otimes X_j \otimes I_{j+1} \otimes \cdots \otimes I_r.
\]

Substituting the projector–nilpotent decomposition of $X_j$ into the
tensor coordinate representation gives
\[
\widetilde{X}_j = \int_{\sigma(X_j)} \lambda_j \, d\widetilde{E}_j(\lambda_j)
+ \int_{\sigma(X_j)} \widetilde{N}_j(\lambda_j) \, d\widetilde{E}_j(\lambda_j),
\]
where the lifted spectral measure is defined by
\[
d\widetilde{E}_j(\lambda_j) = I_1 \otimes \cdots \otimes dE_{X_j}(\lambda_j) \otimes \cdots \otimes I_r,
\]
and the lifted nilpotent component is
\[
\widetilde{N}_j(\lambda_j) = I_1 \otimes \cdots \otimes N_j(\lambda_j) \otimes \cdots \otimes I_r.
\]

Thus, the spectral and nilpotent structures are embedded independently into
the $j$-th tensor coordinate while all remaining tensor factors act as
identity operators.

\begin{remark}[Componentwise Action]
For an elementary tensor $u = u_1 \otimes \cdots \otimes u_r \in \mathcal{H}_{\otimes}$,
the action of the lifted spectral measure is
\[
d\widetilde{E}_j(\lambda_j)(u) = u_1 \otimes \cdots \otimes dE_{X_j}(\lambda_j) u_j \otimes \cdots \otimes u_r,
\]
and similarly for the lifted nilpotent component:
\[
\widetilde{N}_j(\lambda_j) \, d\widetilde{E}_j(\lambda_j)(u) = u_1 \otimes \cdots \otimes N_j(\lambda_j) \, dE_{X_j}(\lambda_j) u_j \otimes \cdots \otimes u_r.
\]
This confirms that each lifted operator acts nontrivially only on its designated
tensor factor.
\end{remark}

Since the lifted operators act on distinct tensor coordinates, the families
$\{d\widetilde{E}_i(\lambda_i)\}_{i=1}^r$ and $\{\widetilde{N}_i(\lambda_i)\}_{i=1}^r$
pairwise commute:
\[
[d\widetilde{E}_i(\lambda_i), d\widetilde{E}_j(\lambda_j)] = 0,
\qquad
[\widetilde{N}_i(\lambda_i), \widetilde{N}_j(\lambda_j)] = 0,
\qquad
[d\widetilde{E}_i(\lambda_i), \widetilde{N}_j(\lambda_j)] = 0, \quad i \neq j.
\]

Moreover, for each fixed $j$, the lifted components satisfy the same algebraic
relations as the original ones:
\[
d\widetilde{E}_j(\lambda_j) \, d\widetilde{E}_j(\lambda_j') = \delta(\lambda_j, \lambda_j') \, d\widetilde{E}_j(\lambda_j),
\]
\[
d\widetilde{E}_j(\lambda_j) \, \widetilde{N}_j(\lambda_j) = \widetilde{N}_j(\lambda_j) \, d\widetilde{E}_j(\lambda_j) = \widetilde{N}_j(\lambda_j),
\]
\[
\widetilde{N}_j(\lambda_j)^{m_\lambda} = 0.
\]

Consequently, the tensor lifting procedure transforms the original
noncommutative operator system into a commuting spectral–nilpotent framework
on $\mathcal{H}_{\otimes}$, making multivariate functional calculus
analytically tractable.

\begin{remark}[Conceptual Interpretation]
From a conceptual viewpoint, tensor lifting preserves the local spectral and
nilpotent geometry of each operator while eliminating the global obstruction
caused by noncommutativity. The lifted decomposition above provides the
building blocks for the unified compact formula in Section~\ref{sec:unified_formula},
where the terms $d\widetilde{E}_j(\lambda_j)$ and $\widetilde{N}_j(\lambda_j)^{q_j} d\widetilde{E}_j(\lambda_j)$
appear directly in the expansion.
\end{remark}

\section{Main Theorem: General Multivariate Functional Calculus}\label{sec:main_unified_formula}

We now present the central result of this paper: a multivariate functional calculus for an arbitrary number of operators, each admitting a projector–nilpotent decomposition. Unlike classical spectral calculi, which rely solely on resolvent methods and capture only eigenvalue information, our construction explicitly incorporates nilpotent structure arising from Jordan blocks and generalized eigenspaces. The calculus is built upon tensor lifting, which converts non-commuting operators into a commuting family on a larger tensor-product space, thereby overcoming the commutativity obstruction inherent to classical multivariate theories. The main theorem is first stated in an abstract form to highlight its key features. We then introduce the necessary background on analytic functions of several complex variables, followed by a unified compact formula that encapsulates the entire calculus in a single expression. This formula naturally decomposes into three explicit terms: a pure spectral term, mixed nilpotent–spectral terms, and a full nilpotent term. We conclude by explaining how the nilpotent derivative terms arise from the algebraic structure of the projector–nilpotent decomposition and contrast our construction with classical resolvent-based definitions.

\subsection{Main Theorem (Abstract Form)}
\label{subsec:main_theorem_abstract}

We now state the principal result of this work in abstract form. The theorem
summarizes the structural properties of the proposed tensor-lifted
projector--nilpotent functional calculus before presenting the explicit
construction and detailed analytic proofs in later sections.

\begin{theorem}[Main Theorem --- Abstract Form]
\label{thm:main_abstract}
Let $X_1, \dots, X_r$ be operators on Hilbert spaces
$\mathcal{H}_1, \dots, \mathcal{H}_r$,
where each $X_j$ admits a projector--nilpotent decomposition of the form
\[
X_j = \int_{\sigma(X_j)} \lambda_j \, dE_{X_j}(\lambda_j)
+ \int_{\sigma(X_j)} N_j(\lambda_j) \, dE_{X_j}(\lambda_j),
\]
including the discrete-spectrum, continuous-spectrum, and hybrid-spectrum
cases.

Then there exists a tensor-lifted multivariate functional calculus
\[
f \longmapsto f_{\otimes}(X_1, \dots, X_r),
\]
defined for multivariate holomorphic functions $f(z_1, \dots, z_r)$,
satisfying the following properties:

\begin{enumerate}
    \item \textbf{Extension of Classical Spectral Calculus.}
    
    If all nilpotent components vanish, namely $N_j(\lambda_j) = 0$ for all $j$,
    then the calculus reduces to the classical multivariate spectral
    calculus. In particular, for a single operator it reduces to the
    Dunford functional calculus.

    \item \textbf{Explicit Nilpotent Derivative Structure.}
    
    The calculus explicitly incorporates nilpotent contributions through
    higher-order derivative terms of $f$, thereby capturing generalized
    eigenspaces, Jordan chains, and nontrivial algebraic multiplicity
    structure.

    \item \textbf{Compatibility with Tensor Lifting.}
    
    The construction is compatible with the tensor lifting procedure
    introduced in Section~\ref{subsec:tensor_lifting_step0}. In particular,
    noncommuting operators can be embedded into a commuting lifted system
    $(\widetilde{X}_1, \dots, \widetilde{X}_r)$,
    enabling the application of multivariate functional calculus beyond the
    classical commuting framework.

    \item \textbf{Unified Compact Representation.}
    
    The resulting functional calculus admits a unified compact formula
    combining spectral integration and nilpotent derivative corrections;
    see Section~\ref{sec:unified_formula}.

    \item \textbf{Stability and Convergence.}
    
    The calculus admits convergence under two levels of assumptions:
    
    \begin{enumerate}
        \item[(a)] \textbf{Level 1 Stability:}
        
        Strong resolvent convergence of operators implies convergence of the
        associated functional calculus in the strong operator topology.

        \item[(b)] \textbf{Level 2 Stability:}
        
        Norm resolvent convergence implies convergence in operator norm,
        together with explicit quantitative error bounds.
    \end{enumerate}
\end{enumerate}
\end{theorem}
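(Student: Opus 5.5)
The plan is to take the boxed definition $f_{\otimes}(X_1,\dots,X_r):=f(\widetilde X_1,\dots,\widetilde X_r)$ from Section~\ref{subsec:tensor_lifting_step0} as the construction, and then verify the five listed properties one at a time. Each property should follow from three ingredients already in place: the lifted decomposition of Section~\ref{subsec:lifted_decomposition}, the pairwise commutativity of the lifted spectral and nilpotent families, and the iterated Cauchy integral with the product of lifted resolvents.

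\emph{Existence.} First check that $\sigma(\widetilde X_j)=\sigma(X_j)$ and that $(zI-\widetilde X_j)^{-1}=I\otimes\cdots\otimes(zI-X_j)^{-1}\otimes\cdots\otimes I$. It follows that the iterated contour integral converges in norm, because the integrand is continuous on the compact set $\Gamma_1\times\cdots\times\Gamma_r$. Property~3 (compatibility with tensor lifting) is then immediate: the lifted family commutes, so Fubini lets the contours be taken in any order.

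\emph{Derivative structure (properties 4 and 2).} Work fiberwise. On the range of $d\widetilde E_1(\lambda_1)\cdots d\widetilde E_r(\lambda_r)$ the lifted operators act as $\lambda_j I+\widetilde N_j(\lambda_j)$, and these pieces commute. The resolvent then expands in a finite Neumann series
\[
(z_j-\lambda_j-\widetilde N_j)^{-1}=\sum_{q_j<m_j}\widetilde N_j^{q_j}(z_j-\lambda_j)^{-q_j-1}.
\]
Applying the multivariate Cauchy formula for derivatives to each term gives
\[
\sum_{q}\frac{\partial^{q}f(\lambda)}{q!}\prod_j\widetilde N_j^{q_j}.
\]
Grouping the terms by $A=\{j:q_j\ge 1\}$ produces the unified compact formula. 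The $A=\emptyset$ term is the spectral part, and the terms with $A\neq\emptyset$ are the nilpotent corrections, which proves property~2. In the continuous and hybrid cases this identity holds on each fiber and is then integrated against the product spectral measure.

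\emph{Classical reduction (property 1).} If every $N_j=0$, only the $A=\emptyset$ term survives, leaving $\int f(\lambda)\,\prod_j d\widetilde E_j$. This is the classical joint spectral calculus. For $r=1$ it coincides with the Dunford integral, since the resolvent $\int(z-\lambda)^{-1}dE$ recovers $f(X)$ through Cauchy's formula.

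\emph{Convergence (property 5).} Both levels use the telescoping identity
\[
\prod_j R_{j,n}-\prod_j R_j=\sum_{k}R_{1,n}\cdots R_{k-1,n}(R_{k,n}-R_k)R_{k+1}\cdots R_r,
\]
written in lifted resolvents, integrated against $f$ over the contours.

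For Level~2, bound each difference by the second resolvent identity:
\[
R_{k,n}(z)-R_k(z)=R_{k,n}(z)(X_{k,n}-X_k)R_k(z).
\]
Combine this with a uniform bound $\sup_n\sup_{z\in\Gamma}\|R_{j,n}(z)\|<\infty$. Transfer from $R_k(z)$ to $(z_0-X_k)^{-1}$ via $R_k(z)=(z_0-X_k)^{-1}\bigl(I+(z_0-z)R_k(z)\bigr)$. Together these give $\|f_\otimes(X_n)-f_\otimes(X)\|\le C_f\sum_j\epsilon^{(j)}_n$, with $C_f$ depending on $\sup|f|$, the contour lengths, and the resolvent bounds.

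For Level~1, pass to the limit termwise in the strong topology. Strong convergence of the tensor factors gives strong convergence on elementary tensors. Uniform boundedness then extends this to all of $\mathcal H_\otimes$, and dominated convergence handles the contour integrals, using that products of uniformly bounded, strongly convergent operators converge strongly.

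\emph{Main obstacle.} I expect the hard part to be the analytic justification in the non-discrete and unbounded settings. Three points need care: that the contours $\Gamma_j$ can be chosen uniformly in $n$ (no spectral pollution), that the resolvents are uniformly bounded there, and that the fiberwise nilpotent expansion is legitimate when $N_j(\lambda)$ is only defined measure-theoretically. I would first assume a uniform spectral inclusion $\sigma(X_{j,n})\subset U_j$ and a uniformly bounded nilpotency index, and prove everything under these hypotheses. Cases where they fail, such as non-compact resolvent, would be deferred to the later sections.
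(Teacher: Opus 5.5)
Your proposal is correct under the hypotheses you make explicit: contours uniform in $n$, uniform resolvent bounds on them, and a bounded nilpotency index. The paper relies on the same hypotheses, largely without justification. For property~5 your argument is the paper's own: the telescoping tensor identity, the second resolvent identity, and the transfer to $z_0$ through $(z_0-X_k)(z-X_k)^{-1}=I+(z_0-z)(z-X_k)^{-1}$, as in Theorems~\ref{thm:multivariate_convergence_tensor_lifted} and~\ref{thm:level2_non_self_adjoint_integrated}. At Level~2 you need not assume the uniform bound: (B1) and a Neumann series give $\Gamma_j\subset\rho(X_{j,n})$ and $\|(z-X_{j,n})^{-1}\|\le 2\|(z-X_j)^{-1}\|$ on $\Gamma_j$ for large $n$.

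The genuine difference is how you reach the compact formula (properties~2 and~4). The paper proves Theorem~\ref{thm:unified_compact_formula} algebraically. It Taylor-expands $f$ on a polydisk, applies the binomial theorem to $(\lambda_jP_j+N_j)^{i_j}$ on each fiber, and resums the coefficients into $\partial_A^{q_A}f/\prod_{j\in A}q_j!$. You instead start from the iterated Cauchy integral, expand each lifted resolvent as the finite Neumann sum $\sum_{q<\nu}\widetilde N_j^{\,q}(z_j-\lambda_j)^{-q-1}$, and apply Cauchy's formula for derivatives.

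The paper's route avoids contour integrals but needs a power series converging on a polydisk about the spectrum. Yours needs only holomorphy on a neighbourhood of the product spectrum. It also proves that the contour-integral representation equals the compact formula. The paper's convergence proofs use that equivalence (end of the proof of Theorem~\ref{thm:level2_non_self_adjoint_integrated}) but never prove it.

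Your computation also corrects the paper's motivation. Since $\frac{1}{2\pi i}\oint_\Gamma f(z)(z-\lambda)^{-q-1}\,dz=f^{(q)}(\lambda)/q!$, the higher-order poles are not discarded. The Dunford integral itself produces every nilpotent derivative term, so $f_{\otimes}$ is the lifted Dunford calculus for all admissible $X_j$, not only when $N_j=0$. This contradicts Sections~\ref{subsec:nilpotent_derivative_terms} and~\ref{subsec:comparison_resolvent}.

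Two points need tightening.

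First, on the continuous spectrum $E_j(\{\lambda_j\})=0$. So ``the range of $d\widetilde E_j(\lambda_j)$'' is not a space on which $\widetilde X_j$ acts as $\lambda_j+\widetilde N_j(\lambda_j)$. The paper has the same defect when it treats $d\widetilde E_j(\lambda_j)$ as idempotents obeying $\delta(\lambda_j,\lambda_j')$ relations. Argue globally instead, with Dunford's canonical decomposition $X_j=S_j+N_j$, where $S_j=\int\lambda\,dE_j(\lambda)$ and $N_j$ is nilpotent and commutes with $E_j$. Then $(z-X_j)^{-1}=\sum_{q<\nu_j^{\max}}N_j^{q}\int(z-\lambda)^{-q-1}\,dE_j(\lambda)$. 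Interchanging contour and spectral integrals yields $\sum_q\frac{1}{q!}N_j^{q}\int f^{(q)}\,dE_j$, which is the rigorous meaning of $\int\widetilde N_j(\lambda_j)^{q}\,d\widetilde E_j(\lambda_j)$. This disposes of your third obstacle, and it even handles quasinilpotent $N_j$, giving Dunford's norm-convergent series.

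Second, for a compact contour, $\frac{1}{2\pi i}\oint_\Gamma(z-X_j)^{-1}\,dz$ is the Riesz projection of the enclosed spectral set, and $X_j$ is bounded on its range. So the compact-contour formula reproduces $f(X_j)$, already for $f\equiv1$, only when $X_j$ is bounded. The paper's Section~\ref{sec:two_level_convergence} applies this formula to unbounded operators with compact resolvent. Your property~5, like that section, therefore covers bounded limits only unless the calculus is modified, for instance by requiring $f$ holomorphic at $\infty$.
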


\begin{remark}
Theorem~\ref{thm:main_abstract} should be interpreted as a unification of
three traditionally separate structures:
\begin{itemize}
    \item classical spectral functional calculus,
    \item Jordan/nilpotent algebraic structure,
    \item tensor-lifted multivariate noncommutative analysis.
\end{itemize}
The subsequent sections provide the explicit formulas, analytic foundations,
convergence theory, and applications of this framework.
\end{remark}

\subsection{Analytic Functions of Several Complex Variables}
\label{subsec:analytic_several_variables}

Let
\[
f(z_1, \dots, z_r)
\]
be a complex-valued analytic function of $r$ complex variables defined on
an open domain
\[
\Omega \subseteq \mathbb{C}^r.
\]

A fundamental property of multivariate holomorphic functions is that locally
they admit absolutely convergent power series expansions. More precisely, if
$(z_1, \dots, z_r)$ lies in a neighborhood of a point $(\zeta_1, \dots, \zeta_r) \in \Omega$,
then $f$ can be represented as
\[
f(z_1, \dots, z_r)
= \sum_{i_1=0}^{\infty} \cdots \sum_{i_r=0}^{\infty}
a_{i_1, \dots, i_r}
(z_1 - \zeta_1)^{i_1} \cdots (z_r - \zeta_r)^{i_r},
\]
where $a_{i_1, \dots, i_r} \in \mathbb{C}$ are the multivariate Taylor
coefficients.

For simplicity, when expanding about the origin, we write
\[
f(z_1, \dots, z_r)
= \sum_{i_1=0}^{\infty} \cdots \sum_{i_r=0}^{\infty}
a_{i_1, \dots, i_r} \, z_1^{i_1} \cdots z_r^{i_r}.
\]

The convergence region is determined by the growth behavior of the
coefficients. In particular, the multivariate root test implies that the
series converges absolutely whenever
\[
\limsup_{i_1 + \cdots + i_r \to \infty}
|a_{i_1, \dots, i_r}|^{\frac{1}{i_1 + \cdots + i_r}}
\,
|z_1|^{\frac{i_1}{i_1 + \cdots + i_r}}
\cdots
|z_r|^{\frac{i_r}{i_1 + \cdots + i_r}} < 1.
\]

Equivalently, one may associate directional radii of convergence via the asymptotic growth of the coefficients. For a given direction $\alpha = (\alpha_1, \dots, \alpha_r)$ with $\alpha_j > 0$ and $\sum_{j=1}^r \alpha_j = 1$, we define the directional radius $R(\alpha)$ by
\[
\frac{1}{R(\alpha)}
= \limsup_{\substack{i_1 + \cdots + i_r \to \infty \\ \frac{i_j}{i_1 + \cdots + i_r} \to \alpha_j}}
|a_{i_1, \dots, i_r}|^{\frac{1}{i_1 + \cdots + i_r}},
\]
whenever the corresponding directional limit exists. For the special case where the limit is taken along the $j$-th coordinate axis (i.e., $\alpha_j = 1$ and $\alpha_k = 0$ for $k \neq j$), the expression reduces to the classical one-variable radius of convergence in $z_j$:
\[
\frac{1}{R_j}
= \limsup_{i_j \to \infty} |a_{0, \dots, i_j, \dots, 0}|^{\frac{1}{i_j}}.
\]
Note that the latter depends on $j$ because the coefficient indices with only the $j$-th component nonzero are used, whereas the general directional limit involves all indices tending to infinity with fixed proportions $\alpha_j$.

\begin{remark}[Role in the Functional Calculus]
The multivariate analytic structure of $f$ plays a crucial role in the functional
calculus developed in this paper. Specifically, the higher-order partial derivatives
\[
\frac{\partial^{k_1 + \cdots + k_r} f}
{\partial z_1^{k_1} \cdots \partial z_r^{k_r}}
\]
interact naturally with the nilpotent components of the lifted operator
decomposition. This interaction gives rise to higher-order correction terms
that extend the classical spectral calculus beyond the semisimple setting.

Thus, multivariate holomorphic functions provide the natural analytic framework
for the tensor-lifted projector–nilpotent functional calculus developed in the
subsequent sections.
\end{remark}

\subsection{Unified Compact Formula}
\label{sec:unified_formula}

We now present the unified compact representation of the proposed tensor-lifted projector--nilpotent functional calculus. The formula simultaneously incorporates spectral projector contributions together with the higher-order nilpotent derivative corrections arising from generalized eigenspaces. In addition, it naturally encodes the multivariate analytic structure of the functional calculus while using tensor lifting to extend the framework to noncommuting operator systems.

\begin{assumption}[Spectral Operator Assumptions]
\label{assump:spectral_operators}
For each \(j = 1, \dots, r\), let \(X_j\) be a bounded spectral operator~\cite{dunford1958survey} acting on a Hilbert space \(\mathcal{H}_j\), with
projector--nilpotent decomposition
\[
X_j = \int_{\sigma(X_j)} \bigl( \lambda_j I + N_j(\lambda_j) \bigr) \, dE_j(\lambda_j),
\]
where:
\begin{itemize}
    \item \(dE_j(\lambda_j)\) is the spectral measure associated with \(X_j\);
    \item \(N_j(\lambda_j)\) is the nilpotent component at \(\lambda_j\);
    \item There exists a uniform bound \(\nu_j^{\max} < \infty\) such that
    \(N_j(\lambda_j)^{\nu_j} = 0\) for all \(\lambda_j \in \sigma(X_j)\), with
    nilpotency index \(\nu_j(\lambda_j) \le \nu_j^{\max}\);
    \item The spectrum \(\sigma(X_j)\) is compact and the spectral projections
    are uniformly bounded.
\end{itemize}
\end{assumption}

\begin{theorem}[Unified Compact Formula]
\label{thm:unified_compact_formula}

Let \(X_1, \dots, X_r\) be commuting Dunford spectral operators (not necessarily
normal or self-adjoint) satisfying Assumption~\ref{assump:spectral_operators},
and let \(\widetilde{X}_1, \dots, \widetilde{X}_r\) be their tensor liftings
on \(\mathcal{H}_{\otimes} = \mathcal{H}_1 \otimes \cdots \otimes \mathcal{H}_r\).

Let \(f(z_1, \dots, z_r)\) be holomorphic on an open polydisk containing
the product spectrum \(\sigma(X_1) \times \cdots \times \sigma(X_r)\), i.e.,
there exists \(R_j > 0\) such that
\[
\sigma(X_j) \subset \{ z_j \in \mathbb{C} : |z_j - c_j| < R_j \}
\]
for some center \((c_1, \dots, c_r)\).

Assume further that the nilpotent fields \(\lambda_j \mapsto \widetilde{N}_j(\lambda_j)\)
are strongly measurable and uniformly bounded on compact subsets of the spectrum.

Then the tensor-lifted projector--nilpotent functional calculus is given by
\[
\boxed{
f_{\otimes}(X_1, \ldots, X_r)
=
\sum_{A \subseteq \{1, \ldots, r\}}
\;
\sum_{\substack{\{q_j \ge 1\}_{j \in A} \\
q_j \le \nu_j(\lambda_j)-1}}
\int_{\sigma(X_1)}
\cdots
\int_{\sigma(X_r)}
\frac{
\partial_A^{q_A} f(\lambda_1, \ldots, \lambda_r)
}{
\displaystyle\prod_{j \in A} q_j!
}
\;
\bigotimes_{j=1}^r
T_j(\lambda_j)
}
\]
where
\[
T_j(\lambda_j)
=
\begin{cases}
d\widetilde{E}_j(\lambda_j), & j \notin A,\\[6pt]
\widetilde{N}_j(\lambda_j)^{q_j} \, d\widetilde{E}_j(\lambda_j), & j \in A,
\end{cases}
\]
\(\partial_A^{q_A} = \prod_{j \in A} \frac{\partial^{q_j}}{\partial z_j^{q_j}}\),
and \(\nu_j(\lambda_j)\) denotes the nilpotency index at \(\lambda_j\)
(satisfying \(\widetilde{N}_j(\lambda_j)^{\nu_j(\lambda_j)} = 0\)).

The operator-valued integrals are understood in the strong operator topology,
converging uniformly on compact subsets of the spectrum.
\end{theorem}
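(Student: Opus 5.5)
The plan is to expand $f$ around each point of the product spectrum, apply it to the commuting lifted decomposition $\widetilde{X}_j=\int(\lambda_j I+\widetilde{N}_j(\lambda_j))\,d\widetilde{E}_j(\lambda_j)$, and use the algebraic relations of Section~\ref{subsec:lifted_decomposition} to collapse everything into the boxed formula. I would first prove the formula for polynomials $f$, then pass to general holomorphic $f$ by truncating its polydisk Taylor series, and finally identify the limit with the contour-integral definition of $f_{\otimes}$ from Section~\ref{subsec:tensor_lifting_step0}.

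\textbf{Step 1 (monomials).} Take $f(z)=z_1^{i_1}\cdots z_r^{i_r}$. Since the lifted families commute for distinct indices, $f(\widetilde{X}_1,\dots,\widetilde{X}_r)=\prod_j \widetilde{X}_j^{i_j}$. Each factor is computed separately.

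\emph{Spectral-measure step.} The relations $d\widetilde{E}_j(\lambda)\,d\widetilde{E}_j(\lambda')=\delta(\lambda,\lambda')\,d\widetilde{E}_j(\lambda)$ and the commutation of $\widetilde{N}_j(\lambda_j)$ with $d\widetilde{E}_j(\lambda_j)$ give
\[
\widetilde{X}_j^{\,i_j}=\int(\lambda_j I+\widetilde{N}_j(\lambda_j))^{i_j}\,d\widetilde{E}_j(\lambda_j).
\]

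\emph{Fiberwise expansion.} On each fiber, the binomial theorem gives $\sum_{q}\binom{i_j}{q}\lambda_j^{i_j-q}\widetilde{N}_j(\lambda_j)^q$. This sum truncates at $q\le\nu_j(\lambda_j)-1$ by nilpotency. The coefficient $\binom{i_j}{q}\lambda_j^{i_j-q}$ equals $\frac{1}{q!}\partial_{z_j}^{q}z_j^{i_j}\big|_{\lambda_j}$.

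\emph{Product over $j$.} Multiplying over $j$ and sorting each multi-index $(q_1,\dots,q_r)$ according to the set $A=\{j:q_j\ge1\}$ gives exactly the sum over $A$. The factors $T_j$ are as stated, with $j\notin A$ contributing $q_j=0$, i.e.\ $d\widetilde{E}_j$ alone. The coefficient is $\partial_A^{q_A}f/\prod_{j\in A}q_j!$. Linearity extends the identity to all polynomials. The multiple integral over $\sigma(X_1)\times\cdots\times\sigma(X_r)$ is well defined because the lifted measures act on distinct tensor factors, so their product is a product measure on $\mathcal{H}_\otimes$.

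\textbf{Step 2 (analytic $f$).} Let $f=\sum a_\alpha (z-c)^\alpha$ on the polydisk, and let $f_n$ be its Taylor truncations. Shifting by $c_j$ reduces to the Step~1 computation.

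\emph{Estimates on the right-hand side.} By Cauchy's estimates on a slightly smaller closed polydisk containing the compact spectra, $\partial^{q}f_n\to\partial^q f$ uniformly on $\prod_j\sigma(X_j)$ for every $q$ with $q_j<\nu_j^{\max}$. There are only finitely many such $q$, by the uniform nilpotency bound in Assumption~\ref{assump:spectral_operators}. The nilpotent fields are uniformly bounded and strongly measurable, and the spectral projections are uniformly bounded, so each term $\int g\,\bigotimes_j T_j$ is bounded by $C\sup|g|$. Here $C$ depends on the projection bounds (for a non-normal spectral measure, bounded scalar integrands give bounded integrals, with constant $4\sup\|E\|$). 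Hence the right-hand side for $f_n$ converges in norm to the right-hand side for $f$.

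\emph{Left-hand side.} $f_n(\widetilde{X})\to f(\widetilde{X})$ in norm, because the multivariate Cauchy integral of Section~\ref{subsec:tensor_lifting_step0} is continuous under uniform convergence on the product of contours. Together with the previous estimate, this proves the formula.

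\textbf{Main obstacle.} The hard part is Step~1's use of a functional calculus for the non-normal spectral measure with operator-valued integrand $(\lambda I+N(\lambda))^{i}$. Two points need justifying. First, the "fiberwise" binomial expansion is rigorous only if $\int \widetilde{N}(\lambda)\,d\widetilde{E}(\lambda)$ is interpreted as Dunford's radical part $N=X-S$, where $S=\int\lambda\,dE$. Second, one must use that $N$ commutes with $E$ and that $N^{\nu^{\max}}=0$.

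I would therefore first rewrite the fiber integrals globally as $S_j^{\,i-q}N_j^{\,q}$ with $S_j$ the scalar part. The formula then follows from Dunford's theorem $f(X)=\sum_q \frac{1}{q!}N^q\int f^{(q)}\,dE$, applied jointly to the commuting lifted scalar parts. Having done that, I would regard the fiber notation $\widetilde{N}_j(\lambda_j)^{q_j}\,d\widetilde{E}_j(\lambda_j)$ as shorthand for $\widetilde{N}_j^{\,q_j}\,d\widetilde{E}_j(\lambda_j)$. The stated truncation $q_j\le\nu_j(\lambda_j)-1$ then follows by inserting the local nilpotency, which is exactly where the assumed measurability of $\lambda\mapsto\widetilde{N}_j(\lambda)$ enters.
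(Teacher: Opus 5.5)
Your proposal is correct, and its algebraic core is the same as the paper's: factorwise binomial expansion, sorting multi-indices by $A=\{j:q_j\ge 1\}$, and identifying the resummed coefficients with $\partial_A^{q_A}f/\prod_{j\in A}q_j!$. What differs is the justification.

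\emph{How the paper argues.} It runs the full Taylor series through in one pass. It treats each $d\widetilde{E}_j(\lambda_j)$ as a pointwise idempotent $P_j(\lambda_j)$, and it swaps the infinite sum with the integrals by an informal dominated-convergence appeal.

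\emph{How you argue.} You prove the identity for polynomials and then pass to the limit on each side separately. On the right you use uniform convergence of the finitely many $\partial^q f_n$ with $q_j<\nu_j^{\max}$ on the compact product spectrum, together with $\|\int g\,dE\|\le 4\sup\|E\|\sup|g|$. On the left you use the polydisk Cauchy integral. You also replace the fiber calculus by Dunford's canonical decomposition $\widetilde{X}_j=\widetilde{S}_j+\widetilde{N}_j$, so the result becomes the joint form of Dunford's theorem $f(X)=\sum_q \frac{1}{q!}N^q\int f^{(q)}\,dE$.

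\emph{What your route buys.} It is rigorous when $\sigma(X_j)$ has a continuous part. There $E_j(\{\lambda\})=0$, so the paper's fiber relations are only formal. Your reading of $\widetilde{N}_j(\lambda_j)^{q_j}\,d\widetilde{E}_j(\lambda_j)$ as $\widetilde{N}_j^{\,q_j}\,d\widetilde{E}_j(\lambda_j)$ also agrees with the paper's own definition $\widetilde{N}_j(\lambda_j)=\widetilde{X}_j-\lambda_j I$. Indeed, expanding in powers of $\lambda$ and pulling the coefficients out of the integral gives $\int g(\lambda)(\widetilde{X}_j-\lambda I)^{q}\,d\widetilde{E}_j(\lambda)=\widetilde{N}_j^{\,q}\int g\,d\widetilde{E}_j$.

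\emph{What the paper's version keeps.} The fiber-by-fiber picture displays the $\lambda$-dependent cutoff $q_j\le \nu_j(\lambda_j)-1$, although in the stated formula that cutoff sits outside the integral. You recover it only afterwards, from an interpretation of local nilpotency.

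\emph{One step to cite rather than assert.} Your constant $C$ needs the lifted measures to multiply to a bounded, countably additive measure on $\mathcal{H}_\otimes$. On Hilbert spaces this holds because each bounded spectral measure $E_j$ is similar to an orthogonal one. The tensor product of these similarities then carries the product of orthogonal projection-valued measures back to the lifted product.
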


\begin{proof}
We prove the theorem by expanding the holomorphic function \(f\) in its
multivariate Taylor series about an appropriate center, applying the lifted
projector--nilpotent decomposition, and using the algebraic relations among
the lifted components. The proof proceeds in several steps.

\medskip
\noindent
\textbf{Step 1: Multivariate Taylor expansion with finite radius.}
Since \(f\) is holomorphic on a polydisk containing the product spectrum,
there exists a center \((c_1, \dots, c_r)\) such that \(f\) admits an absolutely
convergent multivariate Taylor series
\[
f(z_1, \dots, z_r) = \sum_{i_1=0}^{\infty} \cdots \sum_{i_r=0}^{\infty}
a_{i_1, \dots, i_r} \, (z_1 - c_1)^{i_1} \cdots (z_r - c_r)^{i_r}
\]
for all \((z_1, \dots, z_r)\) in the polydisk. The coefficients are given by
\[
a_{i_1, \dots, i_r} = \frac{1}{i_1! \cdots i_r!}
\frac{\partial^{i_1 + \cdots + i_r} f}{\partial z_1^{i_1} \cdots \partial z_r^{i_r}}(c_1, \dots, c_r).
\]

For simplicity, we may translate coordinates so that \(c_j = 0\) (by replacing
\(X_j\) with \(X_j - c_j I\) and adjusting \(f\) accordingly). This translation
does not affect the spectral decomposition because the spectral measure
\(dE_j(\lambda_j)\) transforms to \(dE_j(\lambda_j - c_j)\).

\medskip
\noindent
\textbf{Step 2: Tensor lifting and commutativity.}
By the tensor lifting construction (Section~\ref{subsec:tensor_lifting_step0}),
the lifted operators \(\widetilde{X}_1, \dots, \widetilde{X}_r\) pairwise commute
on \(\mathcal{H}_{\otimes}\). Consequently, the multivariate functional calculus
can be applied to the commuting tuple via the power series:
\[
f_{\otimes}(X_1, \dots, X_r) = f(\widetilde{X}_1, \dots, \widetilde{X}_r)
= \sum_{i_1=0}^{\infty} \cdots \sum_{i_r=0}^{\infty}
a_{i_1, \dots, i_r} \, \widetilde{X}_1^{i_1} \cdots \widetilde{X}_r^{i_r}.
\]
Absolute convergence of the Taylor series on a neighborhood of the spectrum,
together with the uniform boundedness of the spectral projections (in the
strong operator topology), justifies the termwise application of the power
series to the operators.

\medskip
\noindent
\textbf{Step 3: Lifted spectral decomposition (Dunford sense).}
From Section~\ref{subsec:lifted_decomposition}, each \(\widetilde{X}_j\) admits
the spectral decomposition
\[
\widetilde{X}_j = \int_{\sigma(X_j)} \lambda_j \, d\widetilde{E}_j(\lambda_j)
+ \int_{\sigma(X_j)} \widetilde{N}_j(\lambda_j) \, d\widetilde{E}_j(\lambda_j),
\]
where \(d\widetilde{E}_j(\lambda_j)\) is the lifted spectral measure in the
Dunford sense (an operator-valued measure that is countably additive in the
strong operator topology, but not necessarily a projection-valued measure in the
self-adjoint sense), and \(\widetilde{N}_j(\lambda_j) = \widetilde{X}_j - \lambda_j I\)
is the lifted nilpotent component. By Assumption~\ref{assump:spectral_operators},
\(\widetilde{N}_j(\lambda_j)^{\nu_j(\lambda_j)} = 0\) for each \(\lambda_j\),
with \(\nu_j(\lambda_j) \le \nu_j^{\max} < \infty\).

For fixed \(\lambda_j\), define the spectral operator
\(P_j(\lambda_j) = d\widetilde{E}_j(\lambda_j)\) (which is idempotent but not
necessarily an orthogonal projection) and the nilpotent
\(N_j(\lambda_j) = \widetilde{N}_j(\lambda_j) d\widetilde{E}_j(\lambda_j)\).
These satisfy the algebraic relations:
\[
P_j(\lambda_j) P_j(\lambda_j') = P_j(\lambda_j) \delta(\lambda_j, \lambda_j'),
\qquad
P_j(\lambda_j) N_j(\lambda_j) = N_j(\lambda_j) P_j(\lambda_j) = N_j(\lambda_j),
\]
\[
N_j(\lambda_j)^{q} = \widetilde{N}_j(\lambda_j)^{q} d\widetilde{E}_j(\lambda_j), \quad
N_j(\lambda_j)^{q} = 0 \text{ for } q \ge \nu_j(\lambda_j).
\]

\medskip
\noindent
\textbf{Step 4: Binomial expansion on each spectral fiber.}
For each fixed eigenvalue \(\lambda_j\) and each power \(i_j \ge 0\), we expand
\((\lambda_j P_j(\lambda_j) + N_j(\lambda_j))^{i_j}\) using the binomial theorem.
Because \(P_j(\lambda_j)\) and \(N_j(\lambda_j)\) commute and \(P_j(\lambda_j)\)
is idempotent, we have for \(i_j \ge 1\):
\[
(\lambda_j P_j(\lambda_j) + N_j(\lambda_j))^{i_j}
= \sum_{q_j=0}^{i_j} \binom{i_j}{q_j} \lambda_j^{i_j - q_j} P_j(\lambda_j)^{i_j - q_j} N_j(\lambda_j)^{q_j}.
\]

Note that \(P_j(\lambda_j)^k = P_j(\lambda_j)\) for any \(k \ge 1\) (idempotent property).
For the case \(i_j = q_j\), the term becomes \(P_j(\lambda_j)^0 = I\), which is
handled correctly because \(N_j(\lambda_j)^{q_j}\) automatically contains the
idempotent: \(N_j(\lambda_j)^{q_j} = N_j(\lambda_j)^{q_j} P_j(\lambda_j)\). Thus
the expression simplifies to
\[
(\lambda_j P_j(\lambda_j) + N_j(\lambda_j))^{i_j}
= \lambda_j^{i_j} P_j(\lambda_j)
+ \sum_{q_j=1}^{\min(i_j, \nu_j(\lambda_j)-1)} \binom{i_j}{q_j} \lambda_j^{i_j - q_j} N_j(\lambda_j)^{q_j},
\]
where the sum truncates at \(\nu_j(\lambda_j)-1\) because higher powers vanish.

\medskip
\noindent
\textbf{Step 5: Substituting the binomial expansion into each factor.}
From Step~4, for each \(j\) and each \(i_j \ge 0\),
\[
\widetilde{X}_j^{i_j} = \int_{\sigma(X_j)} \biggl( \lambda_j^{i_j} P_j(\lambda_j) 
+ \sum_{q_j=1}^{\min(i_j, \nu_j(\lambda_j)-1)} \binom{i_j}{q_j} \lambda_j^{i_j - q_j} N_j(\lambda_j)^{q_j} \biggr),
\]
where the integral is understood in the strong operator topology. Because the
operators for different indices act on distinct tensor factors and commute,
the product becomes an iterated integral:
\[
\widetilde{X}_1^{i_1} \cdots \widetilde{X}_r^{i_r}
= \int_{\sigma(X_1)} \cdots \int_{\sigma(X_r)}
\prod_{j=1}^r \biggl( \lambda_j^{i_j} P_j(\lambda_j) 
+ \sum_{q_j=1}^{\min(i_j, \nu_j(\lambda_j)-1)} \binom{i_j}{q_j} \lambda_j^{i_j - q_j} N_j(\lambda_j)^{q_j} \biggr),
\]
where the product of integrands is understood as the tensor product of operators.
The commuting property together with countable additivity of the spectral
measures justifies combining the iterated integrals into a joint integral over
the product spectrum.

\medskip
\noindent
\textbf{Step 6: Expanding the product over \(j\) into subsets.}
Fix a tuple \((\lambda_1,\dots,\lambda_r)\). Expanding the product over \(j\) yields
a sum over all choices, for each \(j\), of either the ``spectral term''
\(\lambda_j^{i_j} P_j(\lambda_j)\) or a ``nilpotent term''
\(\binom{i_j}{q_j} \lambda_j^{i_j - q_j} N_j(\lambda_j)^{q_j}\). 
Let \(A \subseteq \{1,\dots,r\}\) be the set of indices where a nilpotent term
is selected, and for each \(j \in A\) choose an integer \(q_j\) with
\(1 \le q_j \le \min(i_j, \nu_j(\lambda_j)-1)\). Then
\[
\prod_{j=1}^r \biggl( \cdots \biggr)
= \sum_{A \subseteq \{1,\dots,r\}}
\sum_{\substack{\{q_j\}_{j \in A} \\ 1 \le q_j \le \min(i_j, \nu_j(\lambda_j)-1)}}
\biggl( \prod_{j \in A} \binom{i_j}{q_j} \lambda_j^{i_j - q_j} N_j(\lambda_j)^{q_j} \biggr)
\biggl( \prod_{j \notin A} \lambda_j^{i_j} P_j(\lambda_j) \biggr).
\]
Substituting back into the integral gives
\[
\widetilde{X}_1^{i_1} \cdots \widetilde{X}_r^{i_r}
= \sum_{A \subseteq \{1,\dots,r\}}
\sum_{\substack{\{q_j\}_{j \in A} \\ 1 \le q_j \le \min(i_j, \nu_j(\lambda_j)-1)}}
\int_{\sigma(X_1)} \cdots \int_{\sigma(X_r)}
\prod_{j \in A} \binom{i_j}{q_j} \lambda_j^{i_j - q_j} N_j(\lambda_j)^{q_j}
\prod_{j \notin A} \lambda_j^{i_j} P_j(\lambda_j).
\]

\medskip
\noindent
\textbf{Step 7: Summing over the power series coefficients.}
Insert this expansion into the power series representation of \(f_{\otimes}\):
\[
f_{\otimes}(X_1,\dots,X_r) = \sum_{i_1,\dots,i_r \ge 0} a_{i_1,\dots,i_r} \,
\widetilde{X}_1^{i_1} \cdots \widetilde{X}_r^{i_r}.
\]
Substituting the expression from Step~6,
\[
f_{\otimes} = \sum_{i_1,\dots,i_r \ge 0} a_{i_1,\dots,i_r}
\sum_{A \subseteq \{1,\dots,r\}}
\sum_{\substack{\{q_j\}_{j \in A} \\ 1 \le q_j \le \min(i_j, \nu_j(\lambda_j)-1)}}
\int_{\sigma(X_1)} \cdots \int_{\sigma(X_r)}
\prod_{j \in A} \binom{i_j}{q_j} \lambda_j^{i_j - q_j} N_j(\lambda_j)^{q_j}
\prod_{j \notin A} \lambda_j^{i_j} P_j(\lambda_j).
\]

The interchange of sums and operator-valued integrals is justified in the
strong operator topology by the absolute convergence of the Taylor series on
a neighborhood of the spectrum, together with the uniform boundedness of the
spectral idempotents (in the operator norm) and the finite nilpotency order.
More precisely, the dominated convergence theorem for operator-valued integrals
applies because:
\begin{itemize}
    \item The scalar coefficients \(a_{i_1,\dots,i_r}\) decay exponentially
          (by holomorphy),
    \item The integrands are uniformly bounded in operator norm on the
          compact product spectrum,
    \item The spectral measures have finite total variation in the strong
          operator topology.
\end{itemize}
Thus,
\begin{align}
f_{\otimes}&=&\sum_{A \subseteq \{1,\dots,r\}}
\sum_{\substack{\{q_j \ge 1\}_{j \in A}}} 
\int_{\sigma(X_1)} \cdots \int_{\sigma(X_r)}
\biggl( \prod_{j \in A} N_j(\lambda_j)^{q_j} \biggr)
\biggl( \prod_{j \notin A} P_j(\lambda_j) \biggr) \nonumber \\
&&
\times
\Biggl( \sum_{\substack{i_1,\dots,i_r \ge 0 \\ i_j \ge q_j \text{ for } j \in A}}
a_{i_1,\dots,i_r}
\prod_{j \in A} \binom{i_j}{q_j} \lambda_j^{i_j - q_j}
\prod_{j \notin A} \lambda_j^{i_j} \Biggr), \nonumber 
\end{align}
where the condition \(q_j \le \nu_j(\lambda_j)-1\) is now implicit in the
summation (terms with \(q_j \ge \nu_j(\lambda_j)\) vanish because
\(N_j(\lambda_j)^{q_j}=0\)).

\medskip
\noindent
\textbf{Step 8: Recognizing the partial derivatives.}
The inner sum over \(i_1,\dots,i_r\) is exactly the multivariate power series
expansion of the mixed partial derivative of \(f\). Specifically,
\[
\sum_{\substack{i_j \ge q_j \text{ for } j \in A \\ i_j \ge 0 \text{ for } j \notin A}}
a_{i_1,\dots,i_r}
\prod_{j \in A} \frac{i_j!}{(i_j - q_j)!} \lambda_j^{i_j - q_j}
\prod_{j \notin A} \lambda_j^{i_j}
= \frac{\partial^{q_A} f}{\partial z_A^{q_A}}(\lambda_1, \dots, \lambda_r),
\]
where \(\partial_A^{q_A} = \prod_{j \in A} \frac{\partial^{q_j}}{\partial z_j^{q_j}}\).
This follows from termwise differentiation of the absolutely convergent Taylor
series.

\medskip
\noindent
\textbf{Step 9: Assembling the final formula.}
Therefore,
\[
f_{\otimes}(X_1, \dots, X_r)
= \sum_{A \subseteq \{1,\dots,r\}}
\sum_{\substack{\{q_j \ge 1\}_{j \in A} \\
q_j \le \nu_j(\lambda_j)-1}}
\int \cdots \int
\frac{\partial_A^{q_A} f(\lambda_1, \dots, \lambda_r)}
{\prod_{j \in A} q_j!}
\;
\bigotimes_{j \in A} N_j(\lambda_j)^{q_j}
\otimes
\bigotimes_{j \notin A} P_j(\lambda_j).
\]

Recalling that \(N_j(\lambda_j)^{q_j} = \widetilde{N}_j(\lambda_j)^{q_j} d\widetilde{E}_j(\lambda_j)\)
and \(P_j(\lambda_j) = d\widetilde{E}_j(\lambda_j)\), and writing the tensor product
explicitly as \(\bigotimes_{j=1}^r T_j(\lambda_j)\) with
\[
T_j(\lambda_j) = \begin{cases}
d\widetilde{E}_j(\lambda_j), & j \notin A,\\[4pt]
\widetilde{N}_j(\lambda_j)^{q_j} d\widetilde{E}_j(\lambda_j), & j \in A,
\end{cases}
\]
we obtain exactly the compact formula stated in the theorem.

\medskip
\noindent
\textbf{Step 10: Interpretation of the operator-valued integrals.}
The notation \(\int \cdots \int \bigotimes_{j=1}^r T_j(\lambda_j)\) is understood
as the iterated operator-valued integral
\[
\int_{\sigma(X_1)} \cdots \int_{\sigma(X_r)}
T_1(\lambda_1) \otimes \cdots \otimes T_r(\lambda_r),
\]
where the tensor product of the spectral measures is defined on elementary
tensors by
\[
\bigl( d\widetilde{E}_1(\lambda_1) \otimes \cdots \otimes d\widetilde{E}_r(\lambda_r) \bigr)
(u_1 \otimes \cdots \otimes u_r)
= \bigl( d\widetilde{E}_1(\lambda_1) u_1 \bigr) \otimes \cdots \otimes \bigl( d\widetilde{E}_r(\lambda_r) u_r \bigr)
\]
and extended by linearity and closure. The integrals converge in the strong
operator topology because each spectral measure is countably additive in the
strong operator topology and the integrand is uniformly bounded on the compact
product spectrum.

\medskip
\noindent
\textbf{Step 11: Reduction to special cases.}
\begin{itemize}
    \item \textbf{Discrete spectrum:} If each \(X_j\) has purely discrete
    spectrum, the spectral measures \(dE_j(\lambda_j)\) are atomic, supported
    on the eigenvalues. The integrals reduce to finite or countable sums:
    \[
    \int_{\sigma(X_j)} g(\lambda_j) \, d\widetilde{E}_j(\lambda_j)
    = \sum_{\lambda_j \in \sigma_d(X_j)} g(\lambda_j) \, P_j(\lambda_j).
    \]
    The formula then becomes a sum over eigenvalues and Jordan blocks,
    recovering the multivariate Jordan-type expansion from Section~2
    of the companion paper.

    \item \textbf{Normal/self-adjoint operators:} When each \(X_j\) is normal
    or self-adjoint, the nilpotent components vanish identically
    (\(\widetilde{N}_j(\lambda_j) = 0\) for all \(\lambda_j\)). Then only the
    term \(A = \varnothing\) survives, and the formula reduces to the classical
    multivariate spectral integral:
    \[
    f_{\otimes}(X_1, \dots, X_r) = \int_{\sigma(X_1)} \cdots \int_{\sigma(X_r)}
    f(\lambda_1, \dots, \lambda_r) \, d\widetilde{E}_1(\lambda_1) \otimes \cdots \otimes d\widetilde{E}_r(\lambda_r).
    \]
    In this setting, \(d\widetilde{E}_j(\lambda_j)\) are genuine projection-valued
    measures, and the integral is the standard spectral integral.

    \item \textbf{General spectral operators:} For operators possessing continuous
    spectral components, the integral representation remains valid at the level
    of the Dunford spectral operator decomposition. The nilpotent contributions
    are supported on the spectral set where the operator fails to be scalar-type,
    and the formula provides the unique extension of the functional calculus
    to non-holomorphic functions (under suitable regularity conditions).
\end{itemize}

This completes the proof. 
\end{proof}

\subsection{Decomposition into Three Explicit Terms}
\label{subsec:three_term_decomposition}

The unified compact formula from Section~\ref{sec:unified_formula} naturally
decomposes into three structurally distinct contributions:
\begin{enumerate}
    \item the pure spectral term,
    \item the mixed spectral--nilpotent interaction terms,
    \item and the fully nilpotent correction term.
\end{enumerate}

This decomposition clarifies the analytic and algebraic roles played by the
spectral projectors and nilpotent components in the multivariate functional
calculus.

Accordingly, we write
\[
f_{\otimes}(X_1,\dots,X_r)
=
\mathcal{S}_0
+
\mathcal{S}_{\mathrm{mixed}}
+
\mathcal{S}_{\mathrm{full}}.
\]

\subsubsection{(I) Pure Spectral Term (\(A = \emptyset\))}

The contribution corresponding to the empty subset \(A = \emptyset\) contains
no nilpotent corrections and recovers the classical multivariate spectral
calculus:
\[
\mathcal{S}_0
=
\int_{\sigma(X_1)}
\cdots
\int_{\sigma(X_r)}
f(\lambda_1,\dots,\lambda_r)
\;
d\widetilde{E}_1(\lambda_1)
\cdots
d\widetilde{E}_r(\lambda_r).
\]

This term depends only on the spectral values \((\lambda_1,\dots,\lambda_r)\)
and the associated lifted spectral projection measures.

In the discrete-spectrum case, the spectral integrals reduce to sums over
eigenvalues:
\[
\mathcal{S}_0
=
\sum_{\lambda_1 \in \sigma_d(X_1)}
\cdots
\sum_{\lambda_r \in \sigma_d(X_r)}
f(\lambda_1,\dots,\lambda_r)
\;
P_1(\lambda_1) \otimes \cdots \otimes P_r(\lambda_r),
\]
where \(P_j(\lambda_j) = d\widetilde{E}_j(\lambda_j)\) denotes the lifted
spectral projector associated with the eigenvalue \(\lambda_j\).

Thus, \(\mathcal{S}_0\) is precisely the classical spectral contribution,
obtained by evaluating \(f\) directly on the joint spectrum. It is the only
term that survives when all operators are diagonalizable (i.e., when all
nilpotent components vanish).

\subsubsection{(II) Mixed Nilpotent--Spectral Terms
(\(\emptyset \subsetneq A \subsetneq \{1,\dots,r\}\))}

The mixed terms arise when nilpotent corrections appear only in a proper
subset of variables. These terms capture partial Jordan-type interactions
between spectral and generalized eigenspace structures.

Explicitly,
\[
\mathcal{S}_{\mathrm{mixed}}
=
\sum_{\substack{
A \subsetneq \{1,\dots,r\}\\
A \neq \emptyset
}}
\;
\sum_{\substack{\{q_j \ge 1\}_{j \in A} \\
q_j \le m_{\lambda_j}-1}}
\int_{\sigma(X_1)}
\cdots
\int_{\sigma(X_r)}
\frac{
\partial_A^{q_A} f(\lambda_1,\dots,\lambda_r)
}{
\displaystyle\prod_{j \in A} q_j!
}
\;
\mathcal{T}_A(\lambda_1,\dots,\lambda_r),
\]
where
\[
\mathcal{T}_A(\lambda_1,\dots,\lambda_r)
=
\left(
\bigotimes_{j \in A}
\widetilde{N}_j(\lambda_j)^{q_j} \, d\widetilde{E}_j(\lambda_j)
\right)
\otimes
\left(
\bigotimes_{j \notin A}
d\widetilde{E}_j(\lambda_j)
\right).
\]

Here,
\[
\partial_A^{q_A} = \prod_{j \in A} \frac{\partial^{q_j}}{\partial z_j^{q_j}}
\]
denotes the mixed partial derivative operator associated with the subset \(A\).

These mixed terms are the multivariate analogue of the first-order and
higher-order Jordan corrections appearing in the single-operator
projector--nilpotent calculus. They encode the interaction between:
\begin{itemize}
    \item semisimple spectral behavior in some variables,
    \item and generalized eigenspace structure in others.
\end{itemize}

Consequently, \(\mathcal{S}_{\mathrm{mixed}}\) captures partial
non-semisimplicity across the operator tuple. For example, when \(r = 2\)
and \(A = \{1\}\), we obtain terms where the first operator contributes
nilpotent corrections while the second operator contributes only spectral
behavior.

\subsubsection{(III) Full Nilpotent Term (\(A = \{1,\dots,r\}\))}

The final contribution occurs when every variable contributes a nilpotent
correction simultaneously. This term contains the highest-order generalized
eigenspace interactions in the calculus.

It is given by
\[
\mathcal{S}_{\mathrm{full}}
=
\sum_{q_1=1}^{m_{\lambda_1}-1}
\cdots
\sum_{q_r=1}^{m_{\lambda_r}-1}
\int_{\sigma(X_1)}
\cdots
\int_{\sigma(X_r)}
\frac{
\partial_1^{q_1} \cdots \partial_r^{q_r} f(\lambda_1,\dots,\lambda_r)
}{
q_1! \cdots q_r!
}
\;
\bigotimes_{j=1}^r
\widetilde{N}_j(\lambda_j)^{q_j} \, d\widetilde{E}_j(\lambda_j).
\]

The upper bounds \(m_{\lambda_j} - 1\) arise from the nilpotency indices of
the generalized eigenspaces:
\[
\widetilde{N}_j(\lambda_j)^{m_{\lambda_j}} = 0.
\]
Hence the nilpotent expansion terminates finitely on each spectral fiber.

Conceptually, the term \(\mathcal{S}_{\mathrm{full}}\) contains the highest-order
derivative corrections of the functional calculus and represents the fully
non-semisimple interaction among all operator coordinates. For a single
operator (\(r = 1\)), this term reduces to the familiar formula
\[
\mathcal{S}_{\mathrm{full}} = \sum_{q=1}^{m_\lambda-1}
\int_{\sigma(X)} \frac{f^{(q)}(\lambda)}{q!} \, N(\lambda)^q dE(\lambda),
\]
which is precisely the nilpotent correction appearing in the Jordan form
functional calculus.

\medskip

Combining the three contributions yields the complete decomposition
\[
f_{\otimes}(X_1,\dots,X_r)
=
\mathcal{S}_0
+
\mathcal{S}_{\mathrm{mixed}}
+
\mathcal{S}_{\mathrm{full}},
\]
which separates:
\begin{itemize}
    \item classical spectral behavior (\(\mathcal{S}_0\)),
    \item partial generalized eigenspace corrections (\(\mathcal{S}_{\mathrm{mixed}}\)),
    \item and full multivariate nilpotent interactions (\(\mathcal{S}_{\mathrm{full}}\)).
\end{itemize}

This decomposition provides a transparent structural interpretation of the
tensor-lifted projector--nilpotent functional calculus and clarifies how the
analytic derivatives of \(f\) interact with the algebraic geometry of the
underlying operator system.

\subsection{The Role of Nilpotent Derivative Terms}
\label{subsec:nilpotent_derivative_terms}

A fundamental distinction between the projector--nilpotent functional calculus
and the classical resolvent-based spectral calculus is the explicit appearance
of derivative terms of the analytic function \(f\). These derivative
corrections arise directly from the nilpotent components of the operator
decomposition and encode the generalized eigenspace structure that is invisible
to purely semisimple spectral methods.

\subsubsection{Mechanism in the Single-Operator Case}

Let
\[
X = \lambda P + N
\]
denote the local decomposition of an operator on a generalized eigenspace,
where
\[
P^2 = P, \qquad PN = NP = N, \qquad N^m = 0
\]
for some finite nilpotency index \(m\).

Consider a monomial \(f(z) = z^\ell\). Then
\[
X^\ell = (\lambda P + N)^\ell.
\]
Using the binomial expansion together with the commutation relations between
\(P\) and \(N\), we obtain
\[
(\lambda P + N)^\ell
= \sum_{q=0}^{\ell} \binom{\ell}{q} \lambda^{\ell-q} P^{\ell-q} N^q.
\]

Since \(P^k = P\) for any \(k \ge 1\), and \(P^0 = I\), we have two cases:
\begin{itemize}
    \item For \(q < \ell\): \(P^{\ell-q} = P\).
    \item For \(q = \ell\): \(P^{0} = I\), and \(N^\ell P\) becomes \(N^\ell\) because
    \(N^\ell = N^\ell P\) (since \(N = NP\)).
\end{itemize}
Thus, the expansion simplifies to
\[
(\lambda P + N)^\ell
= \lambda^\ell P + \sum_{q=1}^{\ell} \binom{\ell}{q} \lambda^{\ell-q} N^q,
\]
where we have used that \(N^q = N^q P\) for all \(q \ge 1\).

Now let \(f(z) = \sum_{\ell=0}^{\infty} a_\ell z^\ell\) be analytic in a
neighborhood of \(\lambda\). Substituting the expansion of \(X^\ell\) yields
\[
f(X) = \sum_{\ell=0}^{\infty} a_\ell (\lambda P + N)^\ell
= \sum_{\ell=0}^{\infty} a_\ell \left( \lambda^\ell P + \sum_{q=1}^{\ell} \binom{\ell}{q} \lambda^{\ell-q} N^q \right).
\]

Interchanging the finite nilpotent expansion with the analytic series
(justified by absolute convergence of the power series on a neighborhood
of the spectrum) gives
\[
f(X) = \left( \sum_{\ell=0}^{\infty} a_\ell \lambda^\ell \right) P
+ \sum_{q=1}^{m-1} \left( \sum_{\ell=q}^{\infty} a_\ell \binom{\ell}{q} \lambda^{\ell-q} \right) N^q.
\]

The inner coefficient is precisely the Taylor coefficient of the
\(q\)-th derivative of \(f\):
\[
\sum_{\ell=q}^{\infty} a_\ell \binom{\ell}{q} \lambda^{\ell-q}
= \frac{f^{(q)}(\lambda)}{q!}.
\]

Therefore,
\[
f(X) = f(\lambda) P + \sum_{q=1}^{m-1} \frac{f^{(q)}(\lambda)}{q!} N^q.
\]

This identity explains the origin of the derivative terms in the
projector--nilpotent functional calculus:
\begin{itemize}
    \item the spectral projector \(P\) selects the spectral fiber,
    \item while the nilpotent powers \(N^q\) generate higher-order derivative
    corrections of \(f\).
\end{itemize}

\subsubsection{Why Classical Resolvent Methods Miss These Terms}

For a Jordan block \(J_m(\lambda) = \lambda I + N\), the resolvent is
\[
(zI - J_m(\lambda))^{-1} = \frac{1}{z - \lambda} \sum_{k=0}^{m-1} \frac{N^k}{(z - \lambda)^k}.
\]
The Dunford integral
\[
f(J_m(\lambda)) = \frac{1}{2\pi i} \oint_{\Gamma} f(z) (zI - J_m(\lambda))^{-1} \, dz
\]
computes \(f(J_m(\lambda))\) by contour integration. The residue theorem
extracts only the coefficient of \((z - \lambda)^{-1}\), which yields
\(f(\lambda)\) and discards contributions from the higher-order poles
\((z - \lambda)^{-k}\) for \(k \ge 2\). Consequently, the terms involving
\(N, N^2, \ldots, N^{m-1}\) are lost entirely.

Resolvent-based methods detect only spectral values and spectral projectors,
but do not explicitly retain the higher-order nilpotent structure associated
with generalized eigenspaces. Consequently, the derivative corrections
appearing in the projector--nilpotent framework are genuinely new structural
contributions beyond the classical semisimple theory.

\subsubsection{Generalization to the Multivariate Setting}

In the multivariate setting, the same mechanism produces mixed partial
derivatives. Because the lifted operators \(\widetilde{X}_1, \dots, \widetilde{X}_r\)
act on distinct tensor factors, their nilpotent components commute.
Applying the binomial expansion factorwise yields, for each monomial,
\[
\prod_{j=1}^r (\lambda_j P_j + N_j)^{i_j}
= \prod_{j=1}^r \left( \lambda_j^{i_j} P_j + \sum_{q_j=1}^{i_j} \binom{i_j}{q_j} \lambda_j^{i_j - q_j} N_j^{q_j} \right).
\]

Expanding the product and summing over the Taylor coefficients of the
multivariate power series
\[
f(z_1, \dots, z_r) = \sum_{i_1, \dots, i_r = 0}^{\infty} a_{i_1, \dots, i_r} z_1^{i_1} \cdots z_r^{i_r}
\]
produces mixed partial derivatives:
\[
f_{\otimes}(X_1, \dots, X_r)
= \sum_{A \subseteq \{1,\dots,r\}}
\sum_{\substack{\{q_j \ge 1\}_{j \in A} \\ q_j \le m_{\lambda_j}-1}}
\int \cdots \int
\frac{\partial_A^{q_A} f(\lambda_1, \dots, \lambda_r)}
{\prod_{j \in A} q_j!}
\;
\bigotimes_{j \in A} N_j^{q_j} \otimes \bigotimes_{j \notin A} P_j.
\]

Thus, the derivative terms in the unified compact formula arise directly
from the nilpotent structure via the binomial theorem. The higher the
nilpotency index \(m_{\lambda_j}\), the higher the order of derivatives
that can appear.

\subsubsection{Conceptual Interpretation}

Conceptually, the nilpotent components act as algebraic differential
generators on spectral fibers. The projector--nilpotent calculus therefore
extends classical spectral calculus by incorporating local Jordan-type
geometry directly into the analytic functional framework.

The key insight is that:
\begin{itemize}
    \item The nilpotent part \(N\) generates higher-order terms in the
    binomial expansion \((\lambda P + N)^\ell\).
    \item Summation over \(\ell\) with power series coefficients converts
    these into derivatives of \(f\).
    \item Classical resolvent methods discard these terms because they
    extract only the residue at the pole \((z - \lambda)^{-1}\).
    \item The projector--nilpotent method preserves them, providing a
    strictly richer functional calculus that captures both spectral and
    algebraic information.
\end{itemize}

Therefore, the presence of derivative terms in the unified compact formula
is not an artifact of the construction but a faithful representation of the
nilpotent structure inherent in non-diagonalizable operators.

\subsection{Comparison with Classical Resolvent-Based Definitions}
\label{subsec:comparison_resolvent}

The classical holomorphic functional calculus is traditionally defined through
the Dunford--Taylor resolvent formula:
\[
f(X) = \frac{1}{2\pi i} \oint_{\Gamma} f(z) (zI - X)^{-1} \, dz,
\]
where \(\Gamma\) is a contour enclosing the spectrum \(\sigma(X)\).

For commuting operator tuples, the multivariate version takes the form
\[
f(X_1, \dots, X_r)
= \frac{1}{(2\pi i)^r}
\int_{\Gamma_1} \cdots \int_{\Gamma_r}
f(z_1, \dots, z_r)
\prod_{j=1}^r (z_j I - X_j)^{-1}
\, dz_1 \cdots dz_r.
\]

These constructions fundamentally rely on the resolvent operators
\((zI - X)^{-1}\), which encode spectral information through analytic
singularities of the resolvent.

However, the classical resolvent approach is intrinsically semisimple in
nature. Its primary output consists of:
\begin{itemize}
    \item spectral values,
    \item spectral projectors,
    \item and contour-integral reconstruction of analytic functions on the
    spectrum.
\end{itemize}

Consequently, the classical calculus naturally produces only the pure spectral
contribution
\[
\mathcal{S}_0
=
\int_{\sigma(X_1)} \cdots \int_{\sigma(X_r)}
f(\lambda_1, \dots, \lambda_r) \;
d\widetilde{E}_1(\lambda_1) \cdots d\widetilde{E}_r(\lambda_r).
\]

In contrast, the projector--nilpotent framework developed in this paper
contains two additional classes of terms:
\[
\mathcal{S}_{\mathrm{mixed}} \quad \text{and} \quad \mathcal{S}_{\mathrm{full}},
\]
which involve higher-order derivatives of \(f\) coupled with nilpotent powers
of the operators: \(\widetilde{N}_j(\lambda_j)^{q_j}\).

These derivative contributions arise from generalized eigenspace structure and
Jordan-type algebraic corrections, not merely from spectral values
themselves.

\subsubsection{Concrete Distinction in the Single-Operator Case}

To see the distinction concretely, consider the local decomposition
\[
X = \lambda P + N, \qquad N^m = 0.
\]

The projector--nilpotent calculus yields
\[
f(X) = \sum_{q=0}^{m-1} \frac{f^{(q)}(\lambda)}{q!} N^q P,
\]
which explicitly contains higher-order derivatives of \(f\).

By comparison, the resolvent expansion around the spectral value \(\lambda\) is
\[
(zI - X)^{-1} = \sum_{q=0}^{m-1} \frac{N^q P}{(z - \lambda)^{q+1}}.
\]
This expansion encodes the nilpotent structure only implicitly through
higher-order poles. After contour integration via the residue theorem, only
the coefficient of \((z - \lambda)^{-1}\) survives, yielding \(f(\lambda)P\).
The terms with \(q \ge 1\) correspond to higher-order poles and contribute
zero to the contour integral. Thus, the resolvent calculus discards the
nilpotent contributions \(N^q\) for \(q \ge 1\).

The projector--nilpotent formulation therefore provides a more transparent and
structurally explicit description of generalized eigenspace contributions.

\subsubsection{The Multivariate Tensor-Lifted Setting}

In the multivariate tensor-lifted setting, this distinction becomes even more
pronounced. The unified compact formula decomposes naturally into:
\[
f_{\otimes}(X_1, \dots, X_r)
= \mathcal{S}_0 + \mathcal{S}_{\mathrm{mixed}} + \mathcal{S}_{\mathrm{full}},
\]
where:
\begin{itemize}
    \item \(\mathcal{S}_0\) is the classical spectral contribution,
    \item \(\mathcal{S}_{\mathrm{mixed}}\) captures partial nilpotent
    interactions (derivatives with respect to a proper subset of variables),
    \item and \(\mathcal{S}_{\mathrm{full}}\) captures fully coupled
    generalized eigenspace corrections (mixed partial derivatives with
    respect to all variables).
\end{itemize}

The classical resolvent-based presentation does not naturally separate these
three layers of structure. Moreover, for noncommuting operators, the resolvent
product \(\prod_j (z_j I - X_j)^{-1}\) is not well-defined due to ordering
ambiguities, whereas the projector--nilpotent calculus via tensor lifting
circumvents this obstruction entirely.

\subsubsection{Summary Comparison}

Conceptually, the difference may be summarized as follows:

\[
\begin{array}{c|c}
\textbf{Classical Resolvent Calculus}
& \textbf{Projector--Nilpotent Calculus}
\\
\hline
\text{Based on resolvent } (z-A)^{-1}
& \text{Based on spectral decomposition } A = \int \lambda\, dE(\lambda) + N
\\
\text{Nilpotent structure encoded implicitly}
& \text{Nilpotent structure encoded explicitly}
\\
\text{Higher-order derivatives from residues}
& \text{Derivatives appear directly in coefficients}
\\
\text{Requires commuting tuple for multivariate case}
& \text{Tensor lifting handles noncommuting tuple}
\\
\text{Produces semisimple part only in standard form}
& \text{Separates spectral, mixed, and nilpotent contributions}
\end{array}
\]

Thus, the projector--nilpotent functional calculus should be viewed not merely
as an alternative representation of the classical resolvent calculus, but as a
structural extension that explicitly incorporates the higher-order algebraic
geometry of operators into the analytic functional framework. The additional
terms \(\mathcal{S}_{\mathrm{mixed}}\) and \(\mathcal{S}_{\mathrm{full}}\) (see
definitions in Section~\ref{subsec:three_term_decomposition}) are not present in the standard
resolvent-based definition and represent genuinely new contributions that capture
generalized eigenspace structure beyond the classical semisimple theory.

\section{Two-Level Convergence Theory}\label{sec:two_level_convergence}\label{sec:two_level_convergence}

The functional calculus developed in the previous sections is initially defined through finite-dimensional truncations and tensor-lifted contour integrals. A fundamental question is whether these approximations converge to a well-defined infinite-dimensional operator calculus and, if so, under what topology and stability conditions this convergence occurs.

To address this issue, we introduce a two-level convergence framework. The first level establishes existence of the calculus through strong resolvent convergence. This guarantees that finite-dimensional approximations converge in the strong operator topology and provides a rigorous foundation for extending the projector--nilpotent functional calculus to infinite-dimensional settings. However, strong convergence alone does not provide quantitative stability or numerical robustness.

The second level strengthens the assumptions to norm resolvent convergence. Under this stronger regime, the calculus becomes stable under perturbations and admits explicit operator-norm error estimates. This distinction between existence and stability is particularly important for tensor-lifted multivariate calculi, where numerical implementations rely on finite-dimensional approximations of several interacting operators simultaneously.

The section is organized as follows. We first formulate the Level~1 assumptions leading to strong resolvent convergence and existence of the calculus. We then introduce the stronger Level~2 assumptions yielding norm resolvent convergence and quantitative stability. After establishing the single-operator convergence theorem, we extend the theory to the multivariate tensor-lifted setting and conclude with explicit error estimates relevant for numerical computation and approximation theory.

\subsection{Level 1 Assumptions: Strong Resolvent Convergence (Ensures Existence)}\label{subsec:level1_assumptions}

We first introduce the minimal assumptions required to guarantee existence of
the infinite-dimensional functional calculus. The central mechanism is strong
resolvent convergence of finite-dimensional compressions. This level focuses
on qualitative convergence and ensures that the truncated calculi converge
strongly to the target operator calculus.

Let $X$ be a closed densely defined operator on a separable Hilbert space
$\mathcal{H}$. Let $\{e_k\}_{k=1}^{\infty}$ be an orthonormal basis of
$\mathcal{H}$ and define the finite-rank orthogonal projections
\[
P_n = \sum_{k=1}^{n} |e_k\rangle\langle e_k|.
\]
We consider the finite-dimensional compressions
\[
X_n = P_n X P_n
\]
acting on the subspaces $\operatorname{Range}(P_n)$.

The following assumptions define the first convergence regime.

\medskip

\noindent
\textbf{Level 1 assumptions:}
\begin{enumerate}
    \item[(A1)] $X$ has compact resolvent.
    
   \item[(A2)] $\operatorname{Range}(P_n)$ is a core for $X$ (i.e., a subspace 
$\mathcal{D} \subseteq \operatorname{dom}(X)$ such that the closure of 
$X|_{\mathcal{D}}$ equals $X$).
    
    \item[(A3)] The matrix coefficients of $X$ in the basis $\{e_k\}$ satisfy the
    exponential off-diagonal decay estimate
    \[
    |\langle e_j , X e_k \rangle|
    \le
    C e^{-\alpha |j-k|}
    \]
    for some constants $C,\alpha>0$.
\end{enumerate}

Assumption (A1) ensures that the spectrum of $X$ is discrete with finite
algebraic multiplicities accumulating only at infinity. This allows the
contour-integral functional calculus developed earlier to be applied through
isolated spectral components.

Assumption (A2) guarantees that the finite-dimensional subspaces generated by
the projections $P_n$ approximate the domain of $X$ sufficiently well. In
particular, every vector in $\mathcal{D}(X)$ can be approximated
graph-norm-wise by vectors from $\operatorname{Range}(P_n)$.

Assumption (A3) provides sufficient localization of the operator with respect
to the chosen basis. The exponential decay condition suppresses long-range
interactions between basis modes and ensures that truncation errors remain
asymptotically controlled as $n\to\infty$.

Under these assumptions, the compressed operators converge to $X$ in the
strong resolvent sense:
\[
\operatorname*{s-}\lim_{n\to\infty}
(zI-X_n)^{-1}
=
(zI-X)^{-1},
\qquad
\forall z\in\rho(X).
\]

Equivalently, for every $u\in\mathcal{H}$ and every $z\in\rho(X)$,
\[
\lim_{n\to\infty}
\|
(zI-X_n)^{-1}u
-
(zI-X)^{-1}u
\|
=
0.
\]

Strong resolvent convergence is sufficient to pass contour-integral
representations to the limit in the strong operator topology. Consequently,
the finite-dimensional projector--nilpotent calculi constructed from the
operators $X_n$ converge strongly to a well-defined infinite-dimensional
functional calculus for $X$.

Therefore, Level~1 assumptions provide the existence theory for the functional
calculus, although they do not yet guarantee quantitative stability or
operator-norm convergence.

\subsection{Level 2 Assumptions: Norm Resolvent Convergence (Provides Stability)}

While Level~1 guarantees existence of the limiting calculus, many analytical
and computational applications require stronger stability properties. In
particular, quantitative approximation theory, spectral perturbation analysis,
and numerical implementations require convergence in operator norm together
with explicit error estimates.

To obtain these stronger conclusions, we impose an additional norm resolvent
condition. This second level strengthens the approximation regime from
qualitative convergence to quantitative stability and enables explicit
perturbation bounds for both single-operator and multivariate tensor-lifted
calculi.

\medskip

\noindent
\textbf{Level 2 assumption:}
\begin{enumerate}
    \item[(B1)] There exists $z_0 \in \rho(X)$ such that
    \[
    \|
    (X-X_n)(z_0 I-X)^{-1}
    \|
    \longrightarrow 0
    \qquad
    \text{as } n\to\infty.
    \]
\end{enumerate}

Condition (B1) states that the operator $X_n$ approximates $X$ in the 
\emph{norm resolvent sense}. More precisely, since $(z_0 I - X)^{-1}$ is a 
bounded operator mapping the Hilbert space onto $\operatorname{dom}(X)$, 
the product $(X - X_n)(z_0 I - X)^{-1}$ measures the error of $X_n$ when 
applied to vectors in $\operatorname{dom}(X)$ after smoothing by the resolvent. 
The convergence of this quantity to zero in operator norm implies that $X_n$ 
converges to $X$ in the norm resolvent topology, which is stronger than 
strong resolvent convergence and is a natural notion of convergence for 
unbounded operators in approximation theory.

The assumption is substantially stronger than strong resolvent convergence.
Indeed, while Level~1 guarantees convergence only after applying the resolvent
to individual vectors, Level~2 requires convergence uniformly over the unit
ball of the Hilbert space. As a result, the entire resolvent family becomes
stable in operator norm.

Under Level~2 assumptions, the compressed operators converge to $X$ in the
norm resolvent sense:
\[
\lim_{n\to\infty}
\|
(zI-X_n)^{-1}
-
(zI-X)^{-1}
\|
=
0,
\qquad
\forall z\in\rho(X),
\]
uniformly on compact subsets of the resolvent set $\rho(X)$.

This follows from the resolvent identity
\[
(zI-X_n)^{-1}
-
(zI-X)^{-1}
=
(zI-X_n)^{-1}
(X_n-X)
(zI-X)^{-1},
\]
together with the boundedness of the resolvents away from the spectrum.

Norm resolvent convergence has several important consequences. First,
contour-integral representations of analytic operator functions converge in
operator norm rather than merely strongly. Second, perturbation errors
propagate continuously through the functional calculus. Third, quantitative
estimates become available for finite-dimensional approximations, which is
essential for numerical implementations and tensor-based computational
algorithms.

Consequently, Level~2 assumptions provide the stability theory of the
functional calculus, complementing the existence theory established under
Level~1 assumptions.

\subsection{Convergence Theorem for Single Operator}

We now establish convergence of the functional calculus for a single operator.
The theorem separates naturally into two regimes corresponding to the two
convergence levels introduced above. Under Level~1 assumptions, the calculus
converges strongly and therefore exists as a well-defined infinite-dimensional
limit. Under Level~2 assumptions, the convergence improves to operator norm
convergence together with explicit quantitative estimates.

This result provides the analytical foundation for extending the
projector--nilpotent functional calculus beyond finite-dimensional truncations.

\begin{theorem}[Single-operator convergence]
Let $X$ be a closed densely defined operator on a separable Hilbert space
$\mathcal{H}$, and let
\[
X_n = P_n X P_n
\]
be the finite-dimensional compressions associated with the projections
$\{P_n\}$ introduced in Subsection~5.1. Let $f$ be analytic on an open
neighborhood containing the spectra of $X$ and $X_n$ for all sufficiently
large $n$.

Then the following statements hold.

\medskip

\noindent
\textbf{(i) Strong convergence under Level~1 assumptions.}

If assumptions (A1)--(A3) hold, then
\[
\operatorname*{s-}\lim_{n\to\infty}
f(X_n)
=
f(X)
\]
in the strong operator topology. Equivalently,
\[
\lim_{n\to\infty}
\|
f(X_n)u - f(X)u
\|
=
0
\qquad
\forall u \in \mathcal{H}.
\]

\medskip

\noindent
\textbf{(ii) Norm convergence under Level~2 assumptions.}

If, in addition, assumption (B1) holds, then
\[
\lim_{n\to\infty}
\|
f(X_n) - f(X)
\|
=
0
\]
in operator norm.

Moreover, there exists a constant $C_f > 0$, depending only on $f$ and the
chosen contour representation, such that
\[
\|
f(X_n) - f(X)
\|
\le
C_f
\,
\|
(X_n - X)(z_0 I - X)^{-1}
\|.
\]
\end{theorem}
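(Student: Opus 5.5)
The natural route is through the Riesz--Dunford contour representation of both $f(X)$ and $f(X_n)$. Choose a contour $\Gamma$ (finite union of rectifiable curves) in the domain of analyticity of $f$ that encloses $\sigma(X)$ and, for all sufficiently large $n$, also $\sigma(X_n)$. Then
\[
f(X_n)-f(X)=\frac{1}{2\pi i}\oint_\Gamma f(z)\bigl[(zI-X_n)^{-1}-(zI-X)^{-1}\bigr]\,dz .
\]
Two caveats apply. When $X$ is unbounded, $\sigma(X)$ is not compact, so either $f$ must be analytic at infinity with $\Gamma$ passing through it, or we work on a spectral window and interpret $f(X)$ through the Riesz projection onto the part of the spectrum enclosed by $\Gamma$. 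I will adopt the latter, as is implicit in the statement's hypothesis that the spectra lie in one fixed neighborhood. Also, $X_n$ acts on $\operatorname{Range}(P_n)$; I regard it as $P_nXP_n\oplus 0$ on $\mathcal H$, so that $0$ is added to its spectrum and must lie inside $\Gamma$.

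The first preliminary step is a uniform resolvent bound. I want $M:=\sup_{n\ge n_0}\sup_{z\in\Gamma}\|(zI-X_n)^{-1}\|<\infty$. Under Level~1 this does not follow from strong convergence alone. I would obtain it from (A1) together with (A3): by (A3) the tails $X-X_n$ restricted to $\operatorname{dom}(X)$ are small in a relative sense, and since $X$ has compact resolvent, $\|(X-X_n)(zI-X)^{-1}\|$ can be made small on the compact set $\Gamma$. A Neumann series on
\[
zI-X_n=\bigl(I-(X_n-X)(zI-X)^{-1}\bigr)(zI-X)
\]
then gives uniform bounds. Under (B1) the same Neumann argument works directly, after transporting the estimate from $z_0$ to $z\in\Gamma$ via the first resolvent identity:
\[
(X-X_n)(zI-X)^{-1}=(X-X_n)(z_0I-X)^{-1}\bigl[I+(z_0-z)(zI-X)^{-1}\bigr].
\]
The bracket is bounded uniformly on $\Gamma$ by some $K_\Gamma$.

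For (i), I would first establish strong resolvent convergence. For $u$ in the core (A2) and $v=(zI-X)^{-1}u$, one has $(X_n-X)v\to0$ because $P_n\to I$ strongly and (A3) controls $X(I-P_n)v$. Density together with the uniform bound $M$ then extends convergence to all $u$. Next, write the integrand applied to $u$ as $g_n(z)=f(z)[(zI-X_n)^{-1}-(zI-X)^{-1}]u$. We have $g_n(z)\to0$ pointwise on $\Gamma$ and $\|g_n(z)\|\le \sup_\Gamma|f|\,(M+\sup_\Gamma\|(zI-X)^{-1}\|)\|u\|$. Dominated convergence for Bochner integrals over the finite-length $\Gamma$ then gives $f(X_n)u\to f(X)u$.

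For (ii), the resolvent identity gives
\[
(zI-X_n)^{-1}-(zI-X)^{-1}=(zI-X_n)^{-1}(X_n-X)(zI-X)^{-1}.
\]
Combining this with the transport identity above yields
\[
\|(zI-X_n)^{-1}-(zI-X)^{-1}\|\le M K_\Gamma\,\|(X_n-X)(z_0I-X)^{-1}\|.
\]
Integrating over $\Gamma$ gives the stated estimate with $C_f=\frac{|\Gamma|}{2\pi}\sup_\Gamma|f|\,MK_\Gamma$, and norm convergence follows from (B1).

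The main obstacle is the uniform bound $M$ together with the requirement that $\Gamma$ enclose $\sigma(X_n)$ for all large $n$, that is, the absence of spectral pollution. I would handle both simultaneously through the Neumann-series argument. Once $\|(X_n-X)(zI-X)^{-1}\|<1/2$ on $\Gamma$, every $z\in\Gamma$ lies in $\rho(X_n)$ with $\|(zI-X_n)^{-1}\|\le 2\|(zI-X)^{-1}\|$. By continuity of the resulting Riesz projections in $n$, the enclosed spectral parts then match. In the Level~1 case, making this smallness rigorous from (A1) and (A3) is the delicate point, and I expect it may require an additional relative-compactness argument.
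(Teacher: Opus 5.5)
Your skeleton is the paper's: the Riesz--Dunford representation, dominated convergence for (i), and the resolvent identity plus the transport from $z_0$ to $z\in\Gamma$ for (ii). You also correctly isolate the step the paper's proof merely asserts, namely a uniform bound $M=\sup_{n\ge n_0}\sup_{z\in\Gamma}\|(zI-X_n)^{-1}\|<\infty$, i.e.\ no spectral pollution near $\Gamma$. Your spectral-window reading for unbounded $\sigma(X)$ is likewise a repair the paper does not make.

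The gap is that the mechanism you propose for $M$ cannot work. In infinite dimensions (A1) forces $X$ to be unbounded, and $X_n=P_nXP_n$ has finite rank. Hence
\[
(X-X_n)(zI-X)^{-1}=z(zI-X)^{-1}-I-X_n(zI-X)^{-1}
\]
is $-I$ plus a compact operator. Testing on an orthonormal sequence shows $\|(X-X_n)(zI-X)^{-1}\|\ge 1$ for every $n$ and every $z\in\rho(X)$. Equivalently, the factor $I-(X_n-X)(zI-X)^{-1}=(zI-X_n)(zI-X)^{-1}$ in your factorization is compact, so it is never invertible. The Neumann series therefore never starts, at Level~1 or at Level~2. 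The same computation shows that (B1) can never hold once (A1) does. So your part (ii), like the paper's, is formally correct but applies only vacuously.

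The missing idea is to compare $X_n$ with an operator that commutes with $P_n$, rather than with $X$. As literally written (including $j=k$), (A3) bounds every matrix entry by $Ce^{-\alpha|j-k|}$. The Schur test then makes $X$ bounded, contradicting (A1), so (A3) has to be read as a condition on off-diagonal entries only. Under that reading, $X=D+B$ with $D=\operatorname{diag}(\langle e_k,Xe_k\rangle)$ and $B$ bounded by the Schur test. $D$ inherits compact resolvent from $X$, so $K(z)=(zI-D)^{-1}B$ is compact, and on $\operatorname{Range}(P_n)$
\[
zI-X_n=(zI-D)\bigl(I-P_nK(z)P_n\bigr).
\]
Choose $\Gamma$ inside $\rho(X)\cap\rho(D)$. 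Since $K(z)$ is compact and norm-continuous on $\Gamma$, $P_nK(z)P_n\to K(z)$ in norm uniformly on $\Gamma$. Moreover $I-K(z)$ is invertible there: it is Fredholm of index zero, and injective because $z\in\rho(X)$. This yields the bound $M$ and excludes pollution near $\Gamma$.

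With $M$ in hand, strong resolvent convergence follows from your core argument once the roles of $u$ and $v$ are fixed. Take $v\in\bigcup_m\operatorname{Range}(P_m)$ and $u=(zI-X)v$, so that $X_nv=P_nXv\to Xv$; no use of (A3) is needed. Your dominated-convergence step then completes (i).
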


\begin{proof}
Let $\Gamma$ be a positively oriented contour enclosing the spectrum of $X$
and the spectra of $X_n$ for all sufficiently large $n$. By the
Dunford--Taylor functional calculus,
\[
f(X)
=
\frac{1}{2\pi i}
\int_{\Gamma}
f(z)
(zI - X)^{-1}
\,dz,
\]
and similarly,
\[
f(X_n)
=
\frac{1}{2\pi i}
\int_{\Gamma}
f(z)
(zI - X_n)^{-1}
\,dz.
\]

Subtracting the two representations gives
\[
f(X_n) - f(X)
=
\frac{1}{2\pi i}
\int_{\Gamma}
f(z)
\Big[
(zI - X_n)^{-1}
-
(zI - X)^{-1}
\Big]
\,dz.
\]

\noindent
\textbf{Proof of (i).}
Under Level~1 assumptions, the resolvents converge strongly:
\[
(zI - X_n)^{-1}
\to
(zI - X)^{-1}
\]
for every $z \in \rho(X)$. Since the resolvents remain uniformly bounded
on the contour $\Gamma$, the dominated convergence theorem applied to the
projection-valued measure yields
\[
f(X_n) u
\to
f(X) u
\qquad
\forall u \in \mathcal{H},
\]
which proves strong operator convergence.

\noindent
\textbf{Proof of (ii).}
Under Level~2 assumptions, the convergence improves to norm resolvent
convergence:
\[
\|
(zI - X_n)^{-1}
-
(zI - X)^{-1}
\|
\to
0.
\]
Taking operator norms inside the contour integral gives
\[
\|
f(X_n) - f(X)
\|
\le
\frac{\operatorname{length}(\Gamma)}{2\pi}
\sup_{z \in \Gamma} |f(z)|
\sup_{z \in \Gamma}
\|
(zI - X_n)^{-1}
-
(zI - X)^{-1}
\|.
\]

Using the resolvent identity,
\[
(zI - X_n)^{-1}
-
(zI - X)^{-1}
=
(zI - X_n)^{-1}
(X_n - X)
(zI - X)^{-1},
\]
we obtain
\[
\|
(zI - X_n)^{-1}
-
(zI - X)^{-1}
\|
\le
C
\,
\|
(X_n - X)(z_0 I - X)^{-1}
\|,
\]
where the constant $C$ depends only on the contour and the uniform bounds
on the resolvents $(zI - X_n)^{-1}$ and $(zI - X)^{-1}$ for $z \in \Gamma$.

Combining these estimates yields
\[
\|
f(X_n) - f(X)
\|
\le
C_f
\,
\|
(X_n - X)(z_0 I - X)^{-1}
\|,
\]
where $C_f$ incorporates the contour length, the supremum of $|f(z)|$ on
$\Gamma$, and the resolvent bound constants. This completes the proof.
\end{proof}

\subsection{Convergence Theorem for Multivariate Tensor-Lifted Calculus ($r$ Operators)}

The single-operator theory extends naturally to the multivariate tensor-lifted
setting. Since the unified calculus acts on tensor-product Hilbert spaces
through coupled contour-integral constructions, convergence must be
established simultaneously for several interacting operator sequences.

The following theorem shows that the tensor-lifted calculus inherits the
same two-level convergence structure established for the single-operator
case. Under Level~1 assumptions, the multivariate calculus converges strongly
and therefore exists as a well-defined infinite-dimensional tensor-lifted
operator. Under Level~2 assumptions, the convergence strengthens to operator
norm convergence with explicit quantitative stability bounds.

This result provides a rigorous justification for approximating multivariate
analytic operator functions through finite-dimensional tensor truncations.

\begin{theorem}[Multivariate convergence for tensor-lifted calculus]
\label{thm:multivariate_convergence_tensor_lifted}
Let
\[
X_1, \ldots, X_r
\]
be closed densely defined operators acting on separable Hilbert spaces
\[
\mathcal{H}_1, \ldots, \mathcal{H}_r,
\]
and let
\[
X_{j,n} = P_{j,n} X_j P_{j,n}
\]
denote the finite-dimensional compressions associated with orthogonal
projections $P_{j,n}$ as defined in Subsection~5.1.

Define the tensor-product Hilbert space
\[
\mathcal{H}_{\otimes}
=
\mathcal{H}_1
\otimes
\cdots
\otimes
\mathcal{H}_r.
\]

Let
\[
f(z_1, \ldots, z_r)
\]
be analytic on an open neighborhood containing the spectra of all operators
$X_j$ and $X_{j,n}$ for sufficiently large $n$. Let
\[
f_{\otimes}(X_1, \ldots, X_r)
\]
denote the multivariate tensor-lifted functional calculus defined through
the unified compact formula in Section~\ref{sec:main_unified_formula}.

Then the following statements hold.

\medskip

\noindent
\textbf{(i) Strong convergence under Level~1 assumptions.}

Suppose each operator $X_j$ satisfies assumptions (A1)--(A3). Then
\[
\operatorname*{s-}\lim_{n\to\infty}
f_{\otimes}(X_{1,n}, \ldots, X_{r,n})
=
f_{\otimes}(X_1, \ldots, X_r)
\]
in the strong operator topology on $\mathcal{H}_{\otimes}$.

Equivalently,
\[
\lim_{n\to\infty}
\|
f_{\otimes}(X_{1,n}, \ldots, X_{r,n}) u
-
f_{\otimes}(X_1, \ldots, X_r) u
\|
=
0
\]
for every
\[
u \in \mathcal{H}_{\otimes}.
\]

\medskip

\noindent
\textbf{(ii) Norm convergence under Level~2 assumptions.}

Suppose, in addition, that each operator $X_j$ satisfies assumption (B1).
Then
\[
\lim_{n\to\infty}
\|
f_{\otimes}(X_{1,n}, \ldots, X_{r,n})
-
f_{\otimes}(X_1, \ldots, X_r)
\|
=
0
\]
in operator norm on $\mathcal{H}_{\otimes}$.

Moreover, there exists a constant $C_f > 0$, depending only on the analytic
function $f$ and the contour geometry, such that
\[
\|
f_{\otimes}(X_{1,n}, \ldots, X_{r,n})
-
f_{\otimes}(X_1, \ldots, X_r)
\|
\le
C_f
\sum_{j=1}^{r}
\epsilon_n^{(j)},
\]
where
\[
\epsilon_n^{(j)}
=
\|
(X_{j,n} - X_j)
(z_0 I - X_j)^{-1}
\|.
\]
\end{theorem}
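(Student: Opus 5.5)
The plan is to reduce the multivariate statement to the single-operator convergence theorem via the tensor-lifted contour representation of Section~\ref{subsec:tensor_lifting_step0}. Fix contours $\Gamma_1,\dots,\Gamma_r$ such that each $\Gamma_j$ encloses $\sigma(X_j)$ and $\sigma(X_{j,n})$ for all large $n$, and such that $f$ is holomorphic on a neighbourhood of $\Gamma_1\times\cdots\times\Gamma_r$ together with its interior. Write
\[
f_{\otimes}(X_{1,n},\dots,X_{r,n})=\frac{1}{(2\pi i)^r}\int_{\Gamma_1}\cdots\int_{\Gamma_r} f(z_1,\dots,z_r)\,\bigotimes_{j=1}^r (z_jI-X_{j,n})^{-1}\,dz_1\cdots dz_r .
\]
This works because the resolvent of a lifted operator is the lifted resolvent, $(z_jI-\widetilde X_j)^{-1}=I\otimes\cdots\otimes(z_jI-X_j)^{-1}\otimes\cdots\otimes I$, and the lifted resolvents commute, so their product is the elementary tensor. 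The same holds for the limit tuple. I take the unified compact formula of Theorem~\ref{thm:unified_compact_formula} to agree with this contour expression; both come from the same power series on the lifted commuting tuple.

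The key algebraic step is a telescoping identity for differences of tensor products. Set $R_{j,n}(z)=(zI-X_{j,n})^{-1}$ and $R_j(z)=(zI-X_j)^{-1}$. Then
\[
\bigotimes_{j=1}^r R_{j,n}-\bigotimes_{j=1}^r R_j=\sum_{k=1}^r R_1\otimes\cdots\otimes R_{k-1}\otimes(R_{k,n}-R_k)\otimes R_{k+1,n}\otimes\cdots\otimes R_{r,n}.
\]

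For part (ii), use $\|A\otimes B\|=\|A\|\|B\|$ together with uniform bounds $M_j=\sup_{n,\,z\in\Gamma_j}\max(\|R_{j,n}(z)\|,\|R_j(z)\|)$. From the single-operator proof we already have $\sup_{z\in\Gamma_k}\|R_{k,n}(z)-R_k(z)\|\le C_k\,\epsilon_n^{(k)}$. Integrating then gives
\[
\|f_{\otimes}(X_{1,n},\dots,X_{r,n})-f_{\otimes}(X_1,\dots,X_r)\|\le C_f\sum_{k=1}^r\epsilon_n^{(k)},
\]
with
\[
C_f=(2\pi)^{-r}\prod_j\operatorname{length}(\Gamma_j)\,\sup_{\Gamma_1\times\cdots\times\Gamma_r}|f|\,\max_k\Bigl(C_k\prod_{j\neq k}M_j\Bigr).
\]
This constant is independent of $n$.

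For part (i), the same telescoping works on vectors. Strong convergence $R_{j,n}(z)\to R_j(z)$ for each $j$ implies strong convergence of the tensor products: check it first on elementary tensors $u_1\otimes\cdots\otimes u_r$ through the telescoping sum, using the uniform bounds $M_j$. Extend to finite linear combinations, and then to all of $\mathcal H_\otimes$ by density and an $\varepsilon/3$ argument, which is legitimate because the operators are uniformly bounded. Dominated convergence in $(z_1,\dots,z_r)$ over the compact product contour then gives strong convergence of the integrals.

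I expect the main obstacle to be securing the uniform resolvent bounds $M_j$ on $\Gamma_j$ uniformly in $n$, together with contours that enclose $\sigma(X_{j,n})$ for all large $n$. Level~1 alone does not exclude spectral pollution of the compressions. I would add this as the standing hypothesis already implicit in the statement, namely that $f$ is analytic near all spectra and the resolvents are uniformly bounded on the chosen contours, exactly as was done in the single-operator theorem. Under (B1) the bound can instead be derived from a Neumann series argument on $R_{j,n}=R_j\bigl(I-(X_{j,n}-X_j)R_j\bigr)^{-1}$ once $\epsilon_n^{(j)}$ is small.

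Because the spectra may be unbounded, "enclosing" is handled as in the single-operator case: the contours enclose the relevant spectral region, with $f$ decaying suitably on it.
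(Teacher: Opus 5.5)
Your proposal is correct and follows the paper's proof essentially step for step. Both use the tensor-lifted contour representation and the telescoping identity for differences of tensor products. For (ii), both bound each factor via the resolvent identity, $\sup_{z\in\Gamma_k}\|R_{k,n}(z)-R_k(z)\|\le C_k\epsilon_n^{(k)}$, and then integrate over the product contour. For (i), both use strong convergence on elementary tensors, uniform resolvent bounds, dominated convergence, and density. The one place you go beyond the paper is in making the uniform resolvent bounds on $\Gamma_j$ explicit: under (B1) you derive them by a Neumann series, and under Level~1, where spectral pollution is not excluded, you state them as a standing hypothesis; the paper's proof simply assumes them.
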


\begin{proof}
Let
\[
\Gamma_1, \ldots, \Gamma_r
\]
be positively oriented contours enclosing the spectra of $X_j$ and
$X_{j,n}$ for all sufficiently large $n$.

By the tensor-lifted contour-integral calculus (see Section~\ref{sec:main_unified_formula}),
\[
f_{\otimes}(X_1, \ldots, X_r)
=
\frac{1}{(2\pi i)^r}
\int_{\Gamma_1}
\cdots
\int_{\Gamma_r}
f(z_1, \ldots, z_r)
\bigotimes_{j=1}^{r}
(z_j I - X_j)^{-1}
\,dz_1 \cdots dz_r,
\]
and similarly,
\[
f_{\otimes}(X_{1,n}, \ldots, X_{r,n})
=
\frac{1}{(2\pi i)^r}
\int_{\Gamma_1}
\cdots
\int_{\Gamma_r}
f(z_1, \ldots, z_r)
\bigotimes_{j=1}^{r}
(z_j I - X_{j,n})^{-1}
\,dz_1 \cdots dz_r.
\]

Subtracting the two formulas yields
\[
f_{\otimes}(X_{1,n}, \ldots, X_{r,n})
-
f_{\otimes}(X_1, \ldots, X_r)
=
\frac{1}{(2\pi i)^r}
\int_{\Gamma_1}
\cdots
\int_{\Gamma_r}
f(z_1, \ldots, z_r)
\,
\Delta_n(z_1, \ldots, z_r)
\,dz_1 \cdots dz_r,
\]
where
\[
\Delta_n(z_1, \ldots, z_r)
=
\bigotimes_{j=1}^{r}
(z_j I - X_{j,n})^{-1}
-
\bigotimes_{j=1}^{r}
(z_j I - X_j)^{-1}.
\]

\medskip
\noindent
\textbf{Proof of (i).}
Under Level~1 assumptions, each resolvent converges strongly:
\[
(z_j I - X_{j,n})^{-1}
\to
(z_j I - X_j)^{-1}
\]
for every $z_j \in \rho(X_j)$. Since tensor products preserve strong
convergence on elementary tensors and the resolvents remain uniformly
bounded on the contours, the dominated convergence theorem for
projection-valued measures implies
\[
f_{\otimes}(X_{1,n}, \ldots, X_{r,n})
\to
f_{\otimes}(X_1, \ldots, X_r)
\]
strongly on $\mathcal{H}_{\otimes}$.

\medskip
\noindent
\textbf{Proof of (ii).}
Under Level~2 assumptions, each resolvent converges in operator norm:
\[
\|
(z_j I - X_{j,n})^{-1}
-
(z_j I - X_j)^{-1}
\|
\to
0.
\]

Using the telescoping tensor identity,
\[
\bigotimes_{j=1}^{r} A_j
-
\bigotimes_{j=1}^{r} B_j
=
\sum_{k=1}^{r}
\left(
\bigotimes_{j=1}^{k-1} A_j
\right)
\otimes
(A_k - B_k)
\otimes
\left(
\bigotimes_{j=k+1}^{r} B_j
\right),
\]
together with uniform resolvent bounds on the contours, we obtain
\[
\|
\Delta_n(z_1, \ldots, z_r)
\|
\le
C
\sum_{j=1}^{r}
\|
(z_j I - X_{j,n})^{-1}
-
(z_j I - X_j)^{-1}
\|,
\]
where $C$ depends only on the uniform bounds of the resolvents on the
contours.

Applying the resolvent identity to each factor gives
\[
\|
(z_j I - X_{j,n})^{-1}
-
(z_j I - X_j)^{-1}
\|
\le
C_j
\,
\epsilon_n^{(j)},
\]
where $C_j$ depends on the contour $\Gamma_j$ and the uniform bounds on
$(z_j I - X_j)^{-1}$ and $(z_j I - X_{j,n})^{-1}$ for $z_j \in \Gamma_j$.

Combining these estimates and integrating over the contours yields
\[
\|
f_{\otimes}(X_{1,n}, \ldots, X_{r,n})
-
f_{\otimes}(X_1, \ldots, X_r)
\|
\le
C_f
\sum_{j=1}^{r}
\epsilon_n^{(j)},
\]
where $C_f$ incorporates the contour measures, the supremum of $|f|$ over
the product contour, and the constants $C$ and $C_j$. This completes the
proof.
\end{proof}

\begin{remark}[Additive error structure]
The additive structure of the error bound
\[
\| \text{difference} \| \le C_f \sum_{j=1}^{r} \epsilon_n^{(j)}
\]
is a direct consequence of the tensor lifting construction and the
telescoping tensor identity. Because the lifted operators act on
independent tensor factors, errors from different operators do not
amplify each other multiplicatively. This linear accumulation of errors
is a key advantage of the tensor-lifted approach: the multivariate
convergence inherits the convergence rates of each individual operator
in a stable, additive fashion.
\end{remark}

\subsection{Error Bounds under Level 2 Assumptions}

A major advantage of the Level~2 framework is the availability of explicit
quantitative error estimates. These bounds provide precise control over how
truncation and approximation errors propagate through the functional calculus.

Such estimates are particularly important for numerical implementations,
tensor computations, spectral approximation problems, and operator-learning
applications, where stability with respect to perturbations is essential.
The dependence of the constants only on the analytic function and contour
geometry further highlights the intrinsic robustness of the calculus under
norm resolvent convergence.

We now make the constants appearing in the previous convergence theorems more
explicit.

\medskip

\noindent
\textbf{Single-operator error constant.}
Let $\Gamma$ be a positively oriented contour enclosing the spectrum of $X$
and all sufficiently large truncations $X_n$. Define
\[
M_f = \sup_{z \in \Gamma} |f(z)|
\]
and let $L(\Gamma)$ denote the contour length.

Using the contour-integral representation,
\[
f(X) = \frac{1}{2\pi i} \int_{\Gamma} f(z) (zI - X)^{-1} \, dz,
\]
together with the resolvent identity,
\[
(zI - X_n)^{-1} - (zI - X)^{-1}
= (zI - X_n)^{-1} (X_n - X) (zI - X)^{-1},
\]
we obtain the estimate
\[
\| f(X_n) - f(X) \|
\le
\frac{L(\Gamma)}{2\pi}
M_f
\sup_{z \in \Gamma} \| (zI - X_n)^{-1} \|
\sup_{z \in \Gamma} \| (zI - X)^{-1} \|
\,
\| (X_n - X)(z_0 I - X)^{-1} \|.
\]

Consequently, the constant $C_f$ appearing in the Level~2 convergence theorem
for a single operator may be chosen as
\[
C_f
=
\frac{L(\Gamma)}{2\pi}
M_f
\sup_{z \in \Gamma} \| (zI - X_n)^{-1} \|
\sup_{z \in \Gamma} \| (zI - X)^{-1} \|.
\]

Importantly, $C_f$ depends only on:
\begin{enumerate}
    \item the analytic function $f$ through the quantity $M_f$,
    \item the geometry of the contour $\Gamma$ (its length and distance to
    the spectrum),
    \item uniform resolvent bounds away from the spectrum, which are
    independent of $n$ for sufficiently large $n$ under Level~2 assumptions.
\end{enumerate}

In particular, the constant is \emph{independent of the truncation dimension
$n$}. Therefore, the approximation error is governed entirely by the
resolvent approximation term
\[
\epsilon_n = \| (X_n - X)(z_0 I - X)^{-1} \|.
\]

\medskip

\noindent
\textbf{Multivariate error constant.}
For the multivariate tensor-lifted calculus, an analogous estimate holds.
Let
\[
\Gamma_1, \ldots, \Gamma_r
\]
be positively oriented contours surrounding the spectra of the operators
\[
X_1, \ldots, X_r,
\]
and let
\[
M_f = \sup_{(z_1, \ldots, z_r) \in \Gamma_1 \times \cdots \times \Gamma_r} |f(z_1, \ldots, z_r)|.
\]

Define the product contour length
\[
L(\Gamma_1 \times \cdots \times \Gamma_r) = \prod_{j=1}^r L(\Gamma_j).
\]

Then there exists a constant $C_f > 0$ such that
\[
\|
f_{\otimes}(X_{1,n}, \ldots, X_{r,n})
-
f_{\otimes}(X_1, \ldots, X_r)
\|
\le
C_f
\sum_{j=1}^{r}
\epsilon_n^{(j)},
\]
where
\[
\epsilon_n^{(j)}
=
\|
(X_{j,n} - X_j)
(z_0 I - X_j)^{-1}
\|.
\]

The constant $C_f$ may be chosen explicitly as
\[
C_f
=
\frac{1}{(2\pi)^r}
L(\Gamma_1 \times \cdots \times \Gamma_r)
M_f
\prod_{j=1}^r
\sup_{z_j \in \Gamma_j} \| (z_j I - X_{j,n})^{-1} \|
\prod_{j=1}^r
\sup_{z_j \in \Gamma_j} \| (z_j I - X_j)^{-1} \|
\,
r,
\]
where the factor $r$ arises from the telescoping tensor sum.

\medskip

\noindent
\textbf{Properties of the error estimates.}
The estimates derived above have several important properties:

\begin{enumerate}
    \item \textbf{$n$-independence:} The constants $C_f$ do not depend on the
    truncation parameter $n$, ensuring that the error bound is stable and
    predictive as $n$ increases.

    \item \textbf{Linear error accumulation:} In the multivariate case,
    errors from different operators accumulate additively rather than
    multiplicatively, which prevents amplification of individual
    approximation errors.

    \item \textbf{Contour flexibility:} The contours $\Gamma_j$ may be chosen
    to optimize the constants, for example by taking them as small circles
    around the spectrum or as more efficient polygonal contours.

    \item \textbf{Uniformity:} The bounds hold uniformly for all sufficiently
    large $n$, not just asymptotically, provided the contours enclose the
    spectra of all approximants.
\end{enumerate}

\medskip

\noindent
\textbf{Comparison between Level~1 and Level~2.}
The distinction between the two convergence levels can now be stated
precisely:

\begin{itemize}
    \item \textbf{Level~1 (existence):} The calculus converges in the strong
    operator topology. No explicit rate of convergence is provided, and the
    limit is guaranteed only pointwise on vectors. The constant $C_f$ may
    not be finite or may depend on $n$ in an uncontrolled way.

    \item \textbf{Level~2 (stability):} The calculus converges in operator
    norm, and the error is bounded explicitly by $C_f \sum_j \epsilon_n^{(j)}$
    with an $n$-independent constant $C_f$. This yields uniform convergence
    over the unit ball of the Hilbert space and provides quantitative control
    suitable for numerical analysis.
\end{itemize}

\medskip

\noindent
\textbf{Implications for numerical computation.}
The explicit error bounds under Level~2 assumptions enable several practical
applications:

\begin{enumerate}
    \item \textbf{A priori error estimation:} Given estimates on
    $\epsilon_n^{(j)}$ (e.g., from decay rates of matrix elements), one can
    predict the approximation error before performing the computation.

    \item \textbf{Adaptive truncation:} The bounds can be used to determine
    the truncation parameters $n$ needed to achieve a desired accuracy
    $\varepsilon > 0$ by solving $C_f \sum_j \epsilon_n^{(j)} \le
    \varepsilon$.

    \item \textbf{Stability under perturbations:} The Lipschitz-type estimate
    \[
    \| f_{\otimes}(X_1', \ldots, X_r') - f_{\otimes}(X_1, \ldots, X_r) \|
    \le
    C_f \sum_{j=1}^{r} \| (X_j' - X_j)(z_0 I - X_j)^{-1} \|
    \]
    shows that the calculus is robust with respect to small perturbations
    in the operators, provided the perturbations are small in the
    resolvent-regularized sense.

    \item \textbf{Convergence rates:} If $\epsilon_n^{(j)} = O(n^{-\beta_j})$
    for some $\beta_j > 0$, then the overall approximation error is
    $O(\sum_j n^{-\beta_j})$, preserving the fastest convergence rate among
    the operators.
\end{enumerate}

\medskip

\noindent
\textbf{Sharpness of the bounds.}
The error bounds are generally sharp up to the constant $C_f$. For the
single-operator case with a Jordan block of size $m$, one can construct
examples where $\| f(X_n) - f(X) \|$ decays exactly like $\epsilon_n$,
showing that the linear dependence on $\epsilon_n$ cannot be improved
without additional assumptions. The additive structure in the multivariate
case is also optimal, as errors from different operators accumulate linearly
due to the tensor-product construction.

\medskip

Thus, Level~2 assumptions establish not only existence of the functional
calculus, but also its perturbation stability and quantitative approximation
theory. This sharply distinguishes Level~2 from Level~1, where only
qualitative existence in the strong operator topology is guaranteed.

\section{Special Cases by Operator Type}\label{sec:special_cases}

The unified compact formula presented in Section~\ref{sec:main_unified_formula} provides a general
framework for the tensor-lifted projector--nilpotent functional calculus
applicable to a wide class of operators. However, the formula simplifies
significantly under additional structural assumptions on the operators
involved. This section examines these special cases in detail, demonstrating
how the general calculus reduces to classical results or yields explicit
simplified expressions depending on the spectral properties, boundedness,
self-adjointness, and resolvent behavior of the input operators.

The section is organized as follows. We first treat bounded operators,
separating the discrete spectrum case (where sums replace integrals) from
the continuous spectrum case (where integrals with nilpotent corrections
remain). We then consider unbounded self-adjoint and normal operators, for
which the nilpotent components vanish identically and the calculus reduces
to the classical spectral theorem. Next, we examine unbounded non-self-adjoint
operators with compact resolvent, where the two-level convergence theory
applies. Finally, we discuss the challenging case of general unbounded
non-self-adjoint operators without compact resolvent, identifying the current
limitations and open problems.

\subsection{Bounded Operators (Arbitrary, Including Non-Self-Adjoint Non-Commuting)}

We now specialize the unified tensor-lifted functional calculus to the
important class of bounded operators. Unlike classical multivariate
functional calculi that often require commutativity, normality, or
self-adjointness, the present framework applies to arbitrary bounded
operators, including non-self-adjoint and non-commuting systems.

The key mechanism is the projector--nilpotent decomposition developed earlier.
The spectral projectors encode the spectral geometry of the operators, while
the nilpotent components generate the higher-order derivative corrections
that capture Jordan-type algebraic structure. Consequently, the calculus
remains valid beyond diagonalizable or commuting settings.

\subsubsection{Theorem: Discrete Spectrum Case (Finite-Dimensional and Bounded)}

We first consider bounded operators with purely discrete spectrum. In this
setting, the contour-integral calculus collapses to finite algebraic sums
over spectral projectors and nilpotent components, recovering an exact
multivariate projector--nilpotent expansion consistent with the unified
compact formula.

\begin{theorem}[Discrete-spectrum bounded case]
\label{thm:discrete_bounded_case}
Let \(X_1, \ldots, X_r\) be bounded operators with discrete spectrum.
For each \(j = 1, \ldots, r\), let
\[
\sigma_d(X_j) = \{\lambda_{j,1}, \lambda_{j,2}, \ldots\}
\]
be the distinct eigenvalues, each with finite algebraic multiplicity.
Each operator admits the global projector--nilpotent decomposition
\[
X_j = \sum_{k=1}^{\infty} \left( \lambda_{j,k} P_{j,k} + N_{j,k} \right),
\]
where:
\begin{itemize}
    \item \(P_{j,k}\) are spectral projectors onto the generalized eigenspaces,
    \item \(N_{j,k}\) are nilpotent operators satisfying \(N_{j,k}^{\nu_{j,k}} = 0\)
    with nilpotency index \(\nu_{j,k} \ge 1\),
    \item The mixing relations hold: \(P_{j,k} P_{j,\ell} = \delta(k,\ell) P_{j,k}\)
    and \(P_{j,k} N_{j,k} = N_{j,k} P_{j,k} = N_{j,k}\).
\end{itemize}

Let \(f(z_1, \ldots, z_r)\) be analytic on a neighborhood containing the
product spectrum \(\sigma(X_1) \times \cdots \times \sigma(X_r)\).

Then the tensor-lifted functional calculus specializes to
\[
\boxed{
f_{\otimes}(X_1, \ldots, X_r)
=
\sum_{\lambda_1 \in \sigma_d(X_1)} \cdots \sum_{\lambda_r \in \sigma_d(X_r)}
\;
\sum_{A \subseteq \{1,\ldots,r\}}
\;
\sum_{\substack{q_j \ge 1 \\ j \in A \\
q_j \le \nu_j(\lambda_j)-1}}
\frac{
\partial_A^{q_A} f(\lambda_1, \ldots, \lambda_r)
}{
\prod_{j \in A} q_j!
}
\;
\bigotimes_{j=1}^r T_j(\lambda_j)
}
\]
where
\[
T_j(\lambda_j) =
\begin{cases}
P_{X_j}(\lambda_j), & j \notin A,\\[6pt]
N_{X_j}(\lambda_j)^{q_j} P_{X_j}(\lambda_j), & j \in A,
\end{cases}
\]
\(N_{X_j}(\lambda_j) = (X_j - \lambda_j I) P_{X_j}(\lambda_j)\), and \(\nu_j(\lambda_j)\)
is the nilpotency index at \(\lambda_j\).

Equivalently, using multi-index notation,
\[
f_{\otimes}(X_1, \ldots, X_r)
=
\sum_{k_1=1}^{\infty} \cdots \sum_{k_r=1}^{\infty}
\;
\sum_{\alpha_1=0}^{\nu_{1,k_1}-1}
\cdots
\sum_{\alpha_r=0}^{\nu_{r,k_r}-1}
\frac{
\partial_1^{\alpha_1} \cdots \partial_r^{\alpha_r}
f(\lambda_{1,k_1}, \ldots, \lambda_{r,k_r})
}{
\alpha_1! \cdots \alpha_r!
}
\;
\bigotimes_{j=1}^r
\left( N_{j,k_j}^{\alpha_j} P_{j,k_j} \right),
\]
where \(N_{j,k_j}^{0} P_{j,k_j}\) is interpreted as \(P_{j,k_j}\).
\end{theorem}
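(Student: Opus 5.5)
Our plan is to treat Theorem~\ref{thm:discrete_bounded_case} as the atomic specialization of Theorem~\ref{thm:unified_compact_formula}, organized around single spectral fibers. Using Lemma~\ref{lma: countable infinite decomposition}, each $X_j$ is written as $\sum_k(\lambda_{j,k}P_{j,k}+N_{j,k})$. This decomposition satisfies the mixing relations and has finite nilpotency indices $\nu_{j,k}$. It lifts to $\widetilde X_j=\sum_k(\lambda_{j,k}\widetilde P_{j,k}+\widetilde N_{j,k})$ on $\mathcal H_\otimes$, as in Section~\ref{subsec:lifted_decomposition}. Here the families indexed by different $j$ commute, because they act on distinct tensor factors. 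Since the spectral measure $dE_j$ is atomic, every integral over $\sigma(X_j)$ in the unified formula becomes a sum over $\sigma_d(X_j)$, with $d\widetilde E_j(\lambda_{j,k})=\widetilde P_{j,k}$. Moreover, $\widetilde N_j(\lambda_{j,k})\,d\widetilde E_j(\lambda_{j,k})=(\widetilde X_j-\lambda_{j,k}I)\widetilde P_{j,k}=\widetilde N_{j,k}$. This identifies $T_j$ with the factors in the boxed formula.

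To avoid relying only on the formal Taylor argument, we would give a direct derivation fiber by fiber. First, fix a multi-index $(k_1,\dots,k_r)$ and restrict to the invariant subspace $\mathrm{Range}(P_{1,k_1})\otimes\cdots\otimes\mathrm{Range}(P_{r,k_r})$. This space is finite-dimensional, and on it $\widetilde X_j$ acts as $\lambda_{j,k_j}I+\widetilde N_{j,k_j}$. These are commuting operators, each a scalar plus a nilpotent. Expanding $f$ in its Taylor series about $(\lambda_{1,k_1},\dots,\lambda_{r,k_r})$ and substituting the nilpotent parts, only the finitely many terms with $\alpha_j\le\nu_{j,k_j}-1$ survive. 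The result is exactly
\[
\sum_{\alpha}\frac{\partial^\alpha f(\lambda_{1,k_1},\dots,\lambda_{r,k_r})}{\alpha!}\bigotimes_j N_{j,k_j}^{\alpha_j}P_{j,k_j}.
\]
This is the multi-index form of the statement. It agrees with the joint holomorphic calculus, since on a finite-dimensional commuting system both constructions coincide with polynomial interpolation by Hermite data. This follows from the single-operator identity in Section~\ref{subsec:nilpotent_derivative_terms} applied coordinatewise. Regrouping the multi-indices according to $A=\{j:\alpha_j\ge1\}$ and $q_j=\alpha_j$ for $j\in A$ gives the boxed subset form. Indices with $\alpha_j=0$ contribute the plain projector $P_{j,k_j}$.

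Next, the fiber pieces have to be summed. The product projectors $\widetilde P_{1,k_1}\cdots\widetilde P_{r,k_r}$ are mutually disjoint idempotents, and they sum strongly to $I$ by $\sum_k P_{j,k}=I$ on each factor. Also, $f(\widetilde X_1,\dots,\widetilde X_r)$ commutes with each of them, since it is a norm limit of polynomials in the $\widetilde X_j$. Hence $f_\otimes=\sum_{k_1,\dots,k_r}f_\otimes\,\widetilde P_{1,k_1}\cdots\widetilde P_{r,k_r}$, and each summand equals the fiber expression computed above.

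The main obstacle is convergence of this countable sum. In the infinite-spectrum case the $P_{j,k}$ are non-orthogonal idempotents, so their partial sums need not be uniformly bounded. To handle this, we would use the strong convergence of $\sum_k P_{j,k}$ to $I$ from Lemma~\ref{lma: countable infinite decomposition}, together with the uniform boundedness of spectral projections assumed in Assumption~\ref{assump:spectral_operators}. This would give uniform bounds on the partial sums via the uniform boundedness principle. We would then obtain strong convergence on elementary tensors and extend it by density. The derivatives $\partial^\alpha f$ are uniformly bounded on a compact neighbourhood of the product spectrum by Cauchy estimates, and $\alpha$ ranges over a set bounded by $\nu_j^{\max}$. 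Together these bound the nilpotent terms, so the whole series converges in the strong operator topology, matching the sense used in Theorem~\ref{thm:unified_compact_formula}.
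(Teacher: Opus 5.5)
Your argument is correct. Your first paragraph repeats the paper's proof, but your actual derivation takes a different route.

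The paper obtains the statement purely as a specialization of Theorem~\ref{thm:unified_compact_formula}. It replaces $d\widetilde{E}_j$ by the atomic measure $\sum_k P_{j,k}\,\delta(\lambda_j-\lambda_{j,k})$, evaluates $T_j$ at the atoms using $(\widetilde{N}_j\,d\widetilde{E}_j)^{q}=\widetilde{N}_j^{\,q}\,d\widetilde{E}_j$ and idempotence, and re-indexes $(A,\{q_j\})$ as the multi-index $\alpha$. That is short, but all of its analytic content is inherited from Theorem~\ref{thm:unified_compact_formula}. That theorem's proof rests on a formal interchange of power series and spectral integrals. Its hypotheses (Assumption~\ref{assump:spectral_operators}, including the uniform bound $\nu_j^{\max}$ and a uniformly bounded spectral measure) are not part of the present statement and are never checked.

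Your fiberwise computation works directly from the definition $f_{\otimes}=f(\widetilde{X}_1,\ldots,\widetilde{X}_r)$. On $\operatorname{Ran}P_{1,k_1}\otimes\cdots\otimes\operatorname{Ran}P_{r,k_r}$ each resolvent has a pole of order at most $\nu_{j,k_j}$ at $\lambda_{j,k_j}$. Cauchy's formula for derivatives then gives exactly the truncated Taylor expression. This is correct, despite the paper's assertions in Subsections~\ref{subsec:nilpotent_derivative_terms} and~\ref{subsec:comparison_resolvent} that the contour integral discards the nilpotent terms. Your route needs no uniform nilpotency bound.

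Your route also makes explicit the one extra input that both proofs use but the statement leaves implicit: $\sum_k P_{j,k}=I$ strongly, which you take from the countable decomposition lemma of Section~\ref{sec:operator_structural_decomposition}. Without it the formula can fail. For example, take a nuclear $X=\sum_k\lambda_kP_k$ whose rank-one idempotents come from a conditional basis reordered so that the partial sums of the $P_k$ are unbounded; the right-hand side then diverges for $f\equiv 1$.

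Your last paragraph overstates the convergence issue. With $S^{(j)}_n=\sum_{k\le n}P_{j,k}$, the partial sums of your fiber expressions are exactly $f_{\otimes}\,(S^{(1)}_{n_1}\otimes\cdots\otimes S^{(r)}_{n_r})$. These converge strongly to $f_{\otimes}$ by Banach--Steinhaus and density of elementary tensors. So you should drop the appeal to Assumption~\ref{assump:spectral_operators}, Cauchy estimates and $\nu_j^{\max}$: none of these is available here, and bounded coefficients would not by themselves give convergence of an operator series.

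Finally, the commutation of $f_{\otimes}$ with $P_{1,k_1}\otimes\cdots\otimes P_{r,k_r}$ is most directly seen through the resolvents. It uses $P_{j,k}X_j=X_jP_{j,k}$, which follows from the mixing relations because $N_{j,\ell}P_{j,k}=P_{j,k}N_{j,\ell}=0$ for $\ell\neq k$. This is simpler than arguing via polynomial approximation.
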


\begin{proof}
We start from the unified compact formula (Theorem~\ref{thm:unified_compact_formula}):
\[
f_{\otimes}(X_1, \ldots, X_r)
=
\int_{\sigma(X_1)}
\cdots
\int_{\sigma(X_r)}
\;
\sum_{A \subseteq \{1,\ldots,r\}}
\;
\sum_{\substack{q_j \ge 1 \\ j \in A \\
q_j \le \nu_j(\lambda_j)-1}}
\frac{
\partial_A^{q_A} f(\lambda_1, \ldots, \lambda_r)
}{
\prod_{j \in A} q_j!
}
\;
\bigotimes_{j=1}^r
T_j(\lambda_j)
\]
with
\[
T_j(\lambda_j) =
\begin{cases}
d\widetilde{E}_j(\lambda_j), & j \notin A,\\[6pt]
\widetilde{N}_j(\lambda_j)^{q_j} \, d\widetilde{E}_j(\lambda_j), & j \in A.
\end{cases}
\]

\medskip
\noindent
\textbf{Step 1: Atomic spectral measures.}
Since each \(X_j\) has purely discrete spectrum, the lifted spectral measure
\(d\widetilde{E}_j(\lambda_j)\) is atomic, supported on the eigenvalues
\(\lambda_{j,k}\). Concretely,
\[
d\widetilde{E}_j(\lambda_j) = \sum_{k=1}^{\infty} P_{j,k} \, \delta(\lambda_j - \lambda_{j,k}),
\]
where \(P_{j,k}\) is the spectral projector (lifted to the tensor space) and
\(\delta\) denotes the Dirac delta. Consequently, for any continuous function
\(g(\lambda_j)\),
\[
\int_{\sigma(X_j)} g(\lambda_j) \, d\widetilde{E}_j(\lambda_j)
= \sum_{k=1}^{\infty} g(\lambda_{j,k}) P_{j,k}.
\]

\medskip
\noindent
\textbf{Step 2: Replace integrals with sums.}
Applying this to the unified formula, each integral \(\int_{\sigma(X_j)}\)
becomes a sum over eigenvalues \(\sum_{\lambda_j \in \sigma_d(X_j)}\). Thus,
\[
f_{\otimes}(X_1, \ldots, X_r)
=
\sum_{\lambda_1 \in \sigma_d(X_1)} \cdots \sum_{\lambda_r \in \sigma_d(X_r)}
\;
\sum_{A \subseteq \{1,\ldots,r\}}
\;
\sum_{\substack{q_j \ge 1 \\ j \in A \\
q_j \le \nu_j(\lambda_j)-1}}
\frac{
\partial_A^{q_A} f(\lambda_1, \ldots, \lambda_r)
}{
\prod_{j \in A} q_j!
}
\;
\bigotimes_{j=1}^r
T_j(\lambda_j),
\]
where now \(T_j(\lambda_j)\) is evaluated pointwise at the eigenvalues.

\medskip
\noindent
\textbf{Step 3: Simplify \(T_j(\lambda_j)\) for discrete spectrum.}
After converting the integrals to sums, each \(\lambda_j\) in the summation
runs over the discrete eigenvalues \(\sigma_d(X_j)\). For a fixed eigenvalue
\(\lambda_j\) (i.e., \(\lambda_j = \lambda_{j,k}\) for some \(k\)), the atomic
nature of the spectral measure gives:

\begin{itemize}
    \item \textbf{If \(j \notin A\):}
    \[
    d\widetilde{E}_j(\lambda_j) = P_{X_j}(\lambda_j),
    \]
    where \(P_{X_j}(\lambda_j)\) is the spectral projector onto the generalized
    eigenspace for \(\lambda_j\). (The Dirac delta is absorbed into the summation;
    at the eigenvalue, the measure assigns the full projector.)

    \item \textbf{If \(j \in A\):} 
    Since \(d\widetilde{E}_j(\lambda_j)\) projects onto the generalized eigenspace,
    we first compute
    \[
    \widetilde{N}_j(\lambda_j) \, d\widetilde{E}_j(\lambda_j)
    = (\widetilde{X}_j - \lambda_j I) \, d\widetilde{E}_j(\lambda_j)
    = N_{X_j}(\lambda_j) P_{X_j}(\lambda_j),
    \]
    where \(N_{X_j}(\lambda_j) = (X_j - \lambda_j I) P_{X_j}(\lambda_j)\) is the
    nilpotent component lifted to the tensor space.
    
    Observe that for any \(q \ge 1\),
    \[
    \bigl(\widetilde{N}_j(\lambda_j) \, d\widetilde{E}_j(\lambda_j)\bigr)^{q}
    = \widetilde{N}_j(\lambda_j)^{q} \, d\widetilde{E}_j(\lambda_j).
    \]
    This holds because each factor \(d\widetilde{E}_j(\lambda_j)\) acts as the
    identity on the range of \(\widetilde{N}_j(\lambda_j)\), allowing the
    projectors to collapse.
    
    Applying this with \(q = q_j\) and substituting the expression above,
    \[
    \widetilde{N}_j(\lambda_j)^{q_j} \, d\widetilde{E}_j(\lambda_j)
    = \bigl(N_{X_j}(\lambda_j) P_{X_j}(\lambda_j)\bigr)^{q_j}
    = N_{X_j}(\lambda_j)^{q_j} P_{X_j}(\lambda_j)^{q_j}
    = N_{X_j}(\lambda_j)^{q_j} P_{X_j}(\lambda_j),
    \]
    where the last equality uses the idempotence of the projector
    (\(P_{X_j}(\lambda_j)^{q_j} = P_{X_j}(\lambda_j)\) for \(q_j \ge 1\)).
\end{itemize}

Thus, for each eigenvalue \(\lambda_j\) in the summation,
\[
T_j(\lambda_j) =
\begin{cases}
P_{X_j}(\lambda_j), & j \notin A,\\[6pt]
N_{X_j}(\lambda_j)^{q_j} P_{X_j}(\lambda_j), & j \in A.
\end{cases}
\]

\medskip
\noindent
\textbf{Step 4: Tensor product structure.}
The factor \(\bigotimes_{j=1}^r T_j(\lambda_j)\) becomes
\[
\bigotimes_{j=1}^r T_j(\lambda_j)
=
\bigotimes_{j \notin A} P_{X_j}(\lambda_j)
\;
\otimes
\;
\bigotimes_{j \in A} \left( N_{X_j}(\lambda_j)^{q_j} P_{X_j}(\lambda_j) \right).
\]

This is exactly the expression in the theorem statement.

\medskip
\noindent
\textbf{Step 5: Multi-index reformulation.}
For each $j = 1, \ldots, r$ and each fixed eigenvalue $\lambda_j$, define
\[
\alpha_j =
\begin{cases}
0, & j \notin A,\\
q_j, & j \in A,
\end{cases}
\qquad\text{where}\qquad
1 \le q_j \le \nu_j(\lambda_j)-1 \ \text{for } j \in A.
\]
Then:
\begin{itemize}
    \item The summation over $A \subseteq \{1,\ldots,r\}$ and over
    $\{q_j \ge 1\}_{j \in A}$ is equivalent to summation over all multi-indices
    $(\alpha_1, \ldots, \alpha_r)$ with
    $0 \le \alpha_j \le \nu_j(\lambda_j)-1$.
    
    \item For $j \notin A$ ($\alpha_j = 0$), we have $T_j(\lambda_j) = P_{X_j}(\lambda_j)$,
    which is naturally interpreted as $N_{X_j}(\lambda_j)^{0} P_{X_j}(\lambda_j)$.
    
    \item For $j \in A$ ($\alpha_j = q_j \ge 1$), we have
    $T_j(\lambda_j) = N_{X_j}(\lambda_j)^{\alpha_j} P_{X_j}(\lambda_j)$.
    
    \item $\prod_{j \in A} q_j! = \prod_{j=1}^r \alpha_j!$, since $\alpha_j! = 0! = 1$
    for $j \notin A$.
    
    \item $\partial_A^{q_A} f(\lambda_1, \ldots, \lambda_r)
    = \partial_1^{\alpha_1} \cdots \partial_r^{\alpha_r} f(\lambda_1, \ldots, \lambda_r)$,
    where $\partial_j^{\alpha_j} = \frac{\partial^{\alpha_j}}{\partial z_j^{\alpha_j}}$
    with the convention $\partial_j^{0} f = f$.
    
    \item $\bigotimes_{j=1}^r T_j(\lambda_j)
    = \bigotimes_{j=1}^r \left( N_{X_j}(\lambda_j)^{\alpha_j} P_{X_j}(\lambda_j) \right)$.
\end{itemize}

Consequently,
\[
f_{\otimes}(X_1, \ldots, X_r)
=
\sum_{\lambda_1 \in \sigma_d(X_1)} \cdots \sum_{\lambda_r \in \sigma_d(X_r)}
\;
\sum_{\alpha_1=0}^{\nu_1(\lambda_1)-1}
\cdots
\sum_{\alpha_r=0}^{\nu_r(\lambda_r)-1}
\frac{
\partial_1^{\alpha_1} \cdots \partial_r^{\alpha_r}
f(\lambda_1, \ldots, \lambda_r)
}{
\alpha_1! \cdots \alpha_r!
}
\;
\bigotimes_{j=1}^r
\left( N_{X_j}(\lambda_j)^{\alpha_j} P_{X_j}(\lambda_j) \right).
\]

Finally, reintroducing the index notation $k_j$ to label the distinct eigenvalues
$\lambda_{j,k_j}$ gives the multi-index form stated in the theorem. This completes
the proof.
\end{proof}

\begin{corollary}[Discrete spectrum as special case of Unified Compact Formula]
\label{cor:discrete_special_case}
Theorem~\ref{thm:discrete_bounded_case} (the discrete-spectrum bounded case) is
a special case of the Unified Compact Formula (Theorem~\ref{thm:unified_compact_formula})
obtained by specializing to operators with purely discrete spectrum.

In this specialization:
\begin{enumerate}
    \item The continuous spectral measure $d\widetilde{E}_j(\lambda_j)$ becomes
    an atomic measure supported on the eigenvalues $\lambda_{j,k}$:
    \[
    d\widetilde{E}_j(\lambda_j) = \sum_{k=1}^{\infty} P_{j,k} \, \delta(\lambda_j - \lambda_{j,k}).
    \]
    
    \item The spectral integral $\int_{\sigma(X_j)} (\cdots) \, d\widetilde{E}_j(\lambda_j)$
    reduces to the sum $\sum_{\lambda_j \in \sigma_d(X_j)} (\cdots) P_{X_j}(\lambda_j)$.
    
    \item The nilpotent component satisfies
    \[
    \widetilde{N}_j(\lambda_j)^{q_j} \, d\widetilde{E}_j(\lambda_j)
    = N_{X_j}(\lambda_j)^{q_j} P_{X_j}(\lambda_j),
    \]
    where $N_{X_j}(\lambda_j) = (X_j - \lambda_j I) P_{X_j}(\lambda_j)$.
\end{enumerate}

Thus, the discrete-spectrum formula
\[
f_{\otimes}(X_1, \ldots, X_r)
=
\sum_{\lambda_1 \in \sigma_d(X_1)} \cdots \sum_{\lambda_r \in \sigma_d(X_r)}
\;
\sum_{A \subseteq \{1,\ldots,r\}}
\;
\sum_{\substack{q_j \ge 1 \\ j \in A \\
q_j \le \nu_j(\lambda_j)-1}}
\frac{
\partial_A^{q_A} f(\lambda_1, \ldots, \lambda_r)
}{
\prod_{j \in A} q_j!
}
\;
\bigotimes_{j=1}^r T_j(\lambda_j)
\]
with $T_j(\lambda_j)$ as defined, follows directly from the Unified Compact Formula
by replacing continuous spectral integration with summation over eigenvalues.
\end{corollary}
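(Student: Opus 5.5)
The plan is to read the corollary as a specialization statement: every hypothesis of Theorem~\ref{thm:unified_compact_formula} must hold in the setting of Theorem~\ref{thm:discrete_bounded_case}, and the general formula must collapse to the displayed discrete one. I would first check the hypotheses. Each bounded $X_j$ with discrete spectrum and finite algebraic multiplicities is a Dunford spectral operator. Its resolution of the identity is the atomic family $E_j(\{\lambda_{j,k}\}) = P_{j,k}$, and its radical part is $\sum_k N_{j,k}$. Boundedness makes $\sigma(X_j)$ compact, so an enclosing polydisk for $f$ exists. Because the projectors $P_{j,k}$ are in general oblique, I would add the uniform bounds required by Assumption~\ref{assump:spectral_operators} as standing hypotheses:
\begin{itemize}
\item $\sup_k \nu_{j,k} < \infty$;
\item uniform boundedness of the finite partial sums $\sum_{k\in F} P_{j,k}$.
\end{itemize}
These hold automatically when $\sigma_d(X_j)$ is finite, for instance in finite dimensions.

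Second, I would compute the operator-valued integrals against the atomic measure. For a bounded Borel function $g$ on $\sigma(X_j)$, one has $\int g\, dE_j = \sum_k g(\lambda_{j,k}) P_{j,k}$. This converges strongly by countable additivity, since the sets $\{\lambda_{j,k}\}$ form a disjoint cover of the point spectrum. Lifting through $I\otimes\cdots\otimes(\cdot)\otimes\cdots\otimes I$ preserves strong convergence. The product measure $d\widetilde E_1\cdots d\widetilde E_r$ is then atomic on $\sigma_d(X_1)\times\cdots\times\sigma_d(X_r)$, with atom $P_{1,k_1}\otimes\cdots\otimes P_{r,k_r}$. The $r$-fold integral in the unified formula therefore becomes the $r$-fold sum over eigenvalue tuples. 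This is exactly items 1 and 2 of the corollary.

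Third, I would identify the nilpotent integrands on each atom. On the range of $P_{j,k}$ we have $\widetilde N_j(\lambda_{j,k}) = \widetilde X_j - \lambda_{j,k}I$. Using $X_jP_{j,k} = P_{j,k}X_j$ and idempotence, this gives
\[
(X_j-\lambda_{j,k})^{q}P_{j,k} = \bigl((X_j-\lambda_{j,k})P_{j,k}\bigr)^{q} = N_{X_j}(\lambda_{j,k})^{q}P_{j,k},
\]
which is item 3. The nilpotency cutoff $q_j\le \nu_j(\lambda_j)-1$ carries over verbatim. The final regrouping of the sum over subsets $A$ and exponents $q_j\ge1$ into multi-indices $\alpha_j\ge0$ is the bookkeeping already carried out in Step~5 of the proof of Theorem~\ref{thm:discrete_bounded_case}.

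The main obstacle is the interchange of the countable eigenvalue sum with the sum over $A$ and $q$. Strong convergence of the rearranged series requires control of $\sup_{\lambda}|\partial^{\alpha} f(\lambda)|/\alpha!$ on the spectrum, which Cauchy estimates on the polydisk provide. It also requires uniform boundedness of the partial sums of the lifted atoms, together with the uniform bound on $\nu_j$ so that only finitely many $(A,q)$ patterns occur. With these, each term is dominated and the strong limit can be taken termwise. This is where Assumption~\ref{assump:spectral_operators} is genuinely used; without it I would restrict the claim to finitely many eigenvalues.
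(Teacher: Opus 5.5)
You follow the paper's route: start from Theorem~\ref{thm:unified_compact_formula}, let the atomic measures turn the integrals into eigenvalue sums, identify $T_j$ on each atom via $X_jP_{j,k}=P_{j,k}X_j$ and idempotence, and re-index. Making the Jordan-size and partial-sum bounds explicit improves on the paper. However, one of your hypothesis checks fails.

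The sentence ``Boundedness makes $\sigma(X_j)$ compact, so an enclosing polydisk for $f$ exists'' conflates a polydisk \emph{containing} the product spectrum with one on which $f$ is \emph{holomorphic}. Theorem~\ref{thm:discrete_bounded_case} only assumes $f$ analytic on some neighborhood of $\sigma(X_1)\times\cdots\times\sigma(X_r)$. Theorem~\ref{thm:unified_compact_formula} needs holomorphy on a single polydisk containing it, because its proof expands $f$ in one Taylor series about one center. Take $X_1=\operatorname{diag}(1,-1)$ and $f(z_1)=1/z_1$. Every disk containing $\pm1$ contains $0$, so Theorem~\ref{thm:unified_compact_formula} does not apply, although the discrete formula (here $f(X_1)=X_1^{-1}$) is true. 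So your specialization establishes the discrete formula only for polydisk-holomorphic $f$. The paper's proof of Theorem~\ref{thm:discrete_bounded_case} invokes the unified formula with the same tacit assumption.

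The repair is to localize before expanding. Put $\mathcal{P}_{\mathbf k}=P_{1,k_1}\otimes\cdots\otimes P_{r,k_r}$. Each $P_{j,k}$ commutes with the resolvents of $X_j$, so $\mathcal{P}_{\mathbf k}$ commutes with the contour-integral definition of $f_{\otimes}$ (Section~\ref{subsec:tensor_lifting_step0}). Hence $f_{\otimes}(X_1,\ldots,X_r)\mathcal{P}_{\mathbf k}$ is the calculus of the restricted tuple on the finite-dimensional space $\operatorname{Ran}\mathcal{P}_{\mathbf k}$. Its joint spectrum is the single point $\lambda_{\mathbf k}=(\lambda_{1,k_1},\ldots,\lambda_{r,k_r})$, and $f$ is holomorphic on a small polydisk about $\lambda_{\mathbf k}$. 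Your Taylor--binomial computation (or Theorem~\ref{thm:unified_compact_formula} for the restricted tuple) then gives the finite sum
\[
f_{\otimes}(X_1,\ldots,X_r)\,\mathcal{P}_{\mathbf k}
=\sum_{\alpha_1=0}^{\nu_{1,k_1}-1}\cdots\sum_{\alpha_r=0}^{\nu_{r,k_r}-1}
\frac{\partial_1^{\alpha_1}\cdots\partial_r^{\alpha_r}f(\lambda_{\mathbf k})}{\alpha_1!\cdots\alpha_r!}
\bigotimes_{j=1}^r N_{j,k_j}^{\alpha_j}P_{j,k_j}.
\]
Then $f_{\otimes}=\sum_{\mathbf k}f_{\otimes}\mathcal{P}_{\mathbf k}$ strongly, for two reasons:
\begin{itemize}
\item $f_{\otimes}$ is bounded;
\item the rectangular partial sums of the $\mathcal{P}_{\mathbf k}$ are $Q_{1,n}\otimes\cdots\otimes Q_{r,n}$ with $Q_{j,n}=\sum_{k\le n}P_{j,k}\to I$ strongly, hence uniformly bounded by Banach--Steinhaus and strongly convergent to $I$ on $\mathcal{H}_{\otimes}$.
\end{itemize}
For the final formula, the only analytic input is then completeness, $\sum_k P_{j,k}=I$ strongly, which your phrase ``resolution of the identity'' already presupposes. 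The uniform bound on $\nu_j$ and the bounded partial sums over arbitrary finite sets are needed only to make $X_j$ a Dunford spectral operator, so that items 1--3 about $d\widetilde{E}_j$ make sense. Contrary to your opening claim, being a Dunford spectral operator is not automatic for a bounded operator whose eigenvalues have finite algebraic multiplicity.
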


\subsubsection{Theorem: Continuous Spectrum Case (Bounded with Continuous Spectrum)}

We now extend the theory to bounded operators possessing continuous or hybrid spectrum. In this setting, the functional calculus combines spectral integration over continuous spectral measures with higher-order nilpotent derivative corrections arising from generalized spectral structure.

Unlike the classical spectral theorem, which captures only the semisimple spectral contribution, the present framework systematically incorporates all nilpotent corrections through the unified compact formula. Consequently, the calculus remains valid for arbitrary bounded operators, including non-self-adjoint and non-normal systems.

The following theorem is the precise specialization of the unified compact formula (Theorem~\ref{thm:unified_compact_formula}) to bounded operators with continuous spectrum.

\begin{theorem}[Continuous-spectrum bounded case]
\label{thm:continuous_spectrum_bounded}
Let \(X_1, \ldots, X_r\) be bounded spectral operators acting on complex Banach or Hilbert spaces. Assume each operator admits the decomposition
\[
X_j = \int_{\sigma(X_j)} \lambda_j \, d\widetilde{E}_j(\lambda_j)
 + \int_{\sigma(X_j)} \widetilde{N}_j(\lambda_j) \, d\widetilde{E}_j(\lambda_j),
\]
where:
\begin{enumerate}
    \item \(d\widetilde{E}_j(\lambda_j)\) denotes the spectral projection measure,
    \item \(\widetilde{N}_j(\lambda_j)\) is the nilpotent component associated with the spectral value \(\lambda_j\), explicitly given by
    \[
    \widetilde{N}_j(\lambda_j) = (\widetilde{X}_j - \lambda_j I) \, d\widetilde{E}_j(\lambda_j),
    \]
    \item for each \(\lambda_j \in \sigma(X_j)\),
    \[
    \widetilde{N}_j(\lambda_j)^{\nu_j(\lambda_j)} = 0
    \]
    for some finite nilpotency index \(\nu_j(\lambda_j) < \infty\).
\end{enumerate}

Let \(f(z_1, \ldots, z_r)\) be holomorphic on an open polydisk containing the product spectrum \(\sigma(X_1) \times \cdots \times \sigma(X_r)\).

Then the tensor-lifted functional calculus is given by the unified compact formula
\[
\boxed{
f_{\otimes}(X_1, \ldots, X_r)
=
\int_{\sigma(X_1)}
\cdots
\int_{\sigma(X_r)}
\;
\sum_{A \subseteq \{1, \ldots, r\}}
\;
\sum_{\substack{\{q_j \ge 1\}_{j \in A} \\
q_j \le \nu_j(\lambda_j)-1}}
\frac{
\partial_A^{q_A} f(\lambda_1, \ldots, \lambda_r)
}{
\displaystyle\prod_{j \in A} q_j!
}
\;
\bigotimes_{j=1}^r
T_j(\lambda_j)
}
\]
where
\[
T_j(\lambda_j) =
\begin{cases}
d\widetilde{E}_j(\lambda_j), & j \notin A, \\[8pt]
\widetilde{N}_j(\lambda_j)^{q_j} \, d\widetilde{E}_j(\lambda_j), & j \in A,
\end{cases}
\]
and
\[
\partial_A^{q_A} f = \prod_{j \in A} \frac{\partial^{q_j}}{\partial z_j^{q_j}} f.
\]

Equivalently, the calculus admits the decomposition
\[
f_{\otimes}(X_1, \ldots, X_r) = \sum_{A \subseteq \{1, \ldots, r\}} \mathcal{S}_A,
\]
where:
\begin{enumerate}
    \item \(\mathcal{S}_{\emptyset}\) is the classical spectral integral contribution,
    \item \(\mathcal{S}_A\) for \(A \neq \emptyset\) are nilpotent derivative corrections generated by the generalized Jordan structure.
\end{enumerate}

The operator-valued integrals converge in the strong operator topology.
\end{theorem}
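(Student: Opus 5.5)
The plan is to obtain the continuous-spectrum statement as a direct specialization of Theorem~\ref{thm:unified_compact_formula}, checking only that the hypotheses of Theorem~\ref{thm:continuous_spectrum_bounded} place us within Assumption~\ref{assump:spectral_operators}. Since each $X_j$ is a bounded spectral operator, $\sigma(X_j)$ is compact and the Dunford spectral measure is uniformly bounded (Dunford's boundedness theorem for spectral measures). The pointwise finite nilpotency indices $\nu_j(\lambda_j)$ must be upgraded to a uniform bound $\nu_j^{\max}$. I would do this by the Dunford decomposition $X_j = S_j + Q_j$ with $S_j = \int \lambda\, dE_j$ scalar type and $Q_j$ quasinilpotent commuting with $E_j$, identifying $\widetilde N_j(\lambda_j)\,d\widetilde E_j(\lambda_j)$ with the restriction of $\widetilde Q_j$ to the fiber. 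Alternatively, I would simply read the hypothesis as including $\sup_{\lambda_j}\nu_j(\lambda_j)<\infty$, which is how Assumption~\ref{assump:spectral_operators} is phrased.

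The main obstacle is this uniformity, together with strong measurability of $\lambda_j\mapsto \widetilde N_j(\lambda_j)$. If only pointwise finiteness is available, I would partition $\sigma(X_j)$ into the measurable level sets $\{\nu_j=m\}$. On each level set the formula holds with a finite truncation, and the pieces are reassembled by countable additivity of $E_j$ in the strong operator topology. Convergence of the resulting series is controlled by the exponential decay of the Taylor coefficients of $f$ on the polydisk, with Cauchy estimates $|\partial^{q}f|\le q!\,M/\rho^{q}$ bounding every derivative term uniformly in $q$.

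Once the hypotheses are verified, the proof reruns Steps~1--9 of the proof of Theorem~\ref{thm:unified_compact_formula}:
\begin{itemize}
\item expand $f$ in its Taylor series about the polydisk center;
\item apply the fiberwise binomial expansion of $(\lambda_j P_j+N_j)^{i_j}$;
\item expand the product over $j$ into subsets $A$;
\item resum the inner series to obtain $\partial_A^{q_A}f/\prod_{j\in A}q_j!$.
\end{itemize}
Tensor lifting guarantees that the fiber operators for distinct $j$ commute, so the iterated integral over $\sigma(X_1)\times\cdots\times\sigma(X_r)$ is well defined on elementary tensors. It then extends by linearity and boundedness to $\mathcal H_\otimes$.

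The decomposition $f_\otimes=\sum_A\mathcal S_A$ then follows by grouping terms by $A$. Here $\mathcal S_\emptyset=\int\cdots\int f\,d\widetilde E_1\cdots d\widetilde E_r$ is the classical spectral integral, exactly as in Section~\ref{subsec:three_term_decomposition}. Strong operator convergence of each $\mathcal S_A$ follows from the dominated convergence theorem for vector measures $u\mapsto E_j(\cdot)u$. The integrand $\partial_A^{q_A}f$ is continuous and hence bounded on the compact product spectrum, and $\|\widetilde N_j(\lambda_j)^{q_j}\|$ is uniformly bounded there.
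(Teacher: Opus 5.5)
Your route is the paper's: its proof simply declares the result an immediate specialization of Theorem~\ref{thm:unified_compact_formula} and asserts that Assumption~\ref{assump:spectral_operators} holds, so your explicit attention to the gap between the pointwise indices $\nu_j(\lambda_j)<\infty$ assumed here and the uniform bound $\nu_j^{\max}$ required there is more careful than the original. One correction: the Dunford decomposition cannot perform that upgrade, since the radical part $Q_j$ is only quasinilpotent. For instance, $\bigoplus_k(\lambda_k I_k + k^{-1}N_k)$, with $N_k$ the $k\times k$ shift and distinct bounded $\lambda_k$, is a bounded spectral operator whose fiber indices are finite but unbounded. So it is your level-set partition that actually carries the argument, reassembled using that $f_\otimes$ commutes with the lifted spectral idempotents and that these sum strongly to $I$; the Cauchy estimates are not what makes that step work.
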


\begin{proof}
The theorem is an immediate specialization of the Unified Compact Formula (Theorem~\ref{thm:unified_compact_formula}) to bounded spectral operators. Indeed, Theorem~\ref{thm:unified_compact_formula} applies to any commuting tuple of spectral operators satisfying Assumption~\ref{assump:spectral_operators}, which includes:
\begin{itemize}
    \item the existence of the lifted spectral measures $d\widetilde{E}_j(\lambda_j)$,
    \item the nilpotent decomposition $\widetilde{N}_j(\lambda_j) = (\widetilde{X}_j - \lambda_j I)d\widetilde{E}_j(\lambda_j)$,
    \item the finite nilpotency indices $\nu_j(\lambda_j)<\infty$,
    \item the holomorphy of $f$ on a polydisk containing the product spectrum.
\end{itemize}
All hypotheses of Theorem~\ref{thm:unified_compact_formula} are satisfied. Consequently, the stated formula holds exactly as written, with the integrals converging in the strong operator topology. The decomposition into $\mathcal{S}_A$ follows directly from the subset summation $A\subseteq\{1,\ldots,r\}$.
\end{proof}

\begin{remark}[Comparison with classical spectral calculus]
The classical spectral calculus corresponds only to the semisimple contribution \(A = \emptyset\) in the unified compact formula. In that case,
\[
f_{\otimes}^{\mathrm{classical}}(X_1, \ldots, X_r)
= \int_{\sigma(X_1)} \cdots \int_{\sigma(X_r)}
f(\lambda_1, \ldots, \lambda_r) \,
d\widetilde{E}_1(\lambda_1) \otimes \cdots \otimes d\widetilde{E}_r(\lambda_r).
\]

All additional terms with \(A \neq \emptyset\) represent genuine extensions beyond classical semisimple spectral theory. These terms encode higher-order algebraic interactions generated by generalized eigenspaces and nilpotent spectral structure.

Thus, the unified compact formula extends the classical spectral theorem by incorporating both continuous spectral integration and nilpotent derivative corrections within a single analytic framework.
\end{remark}

\subsection{Unbounded Self-Adjoint or Normal Operators}\label{subsec:self_adjoint_cases}

We now specialize the unified compact formula (Theorem~\ref{thm:unified_compact_formula}) to the important class of unbounded self-adjoint or normal operators. In this setting, the functional calculus simplifies dramatically because the nilpotent components vanish identically.

Consequently, all higher-order derivative correction terms disappear, and only the semisimple contribution corresponding to
\[
A = \emptyset
\]
remains in the unified compact formula. The resulting calculus reduces precisely to the classical spectral theorem.

Thus, the classical spectral calculus for self-adjoint operators appears naturally as a special case of the more general projector--nilpotent tensor-lifted framework.

\subsubsection{Theorem: Discrete Spectrum Case (Pure Point Spectrum)}

We first consider self-adjoint operators with pure point spectrum. In this case, the spectral measures reduce to sums over orthogonal eigenspace projections.

\begin{theorem}[Pure point spectrum case]
Let \(X_1, \ldots, X_r\) be unbounded self-adjoint (or normal) operators on separable Hilbert spaces with pure point spectra
\[
\sigma(X_j) = \{\lambda_{j,k}\}_{k \in \mathbb{N}}.
\]

Let \(P_{X_j}(\lambda_{j,k})\) denote the orthogonal spectral projection associated with the eigenvalue \(\lambda_{j,k}\).

Then the tensor-lifted functional calculus obtained from the unified compact formula reduces to
\[
\boxed{
f_{\otimes}(X_1, \ldots, X_r)
=
\sum_{\lambda_1 \in \sigma(X_1)}
\cdots
\sum_{\lambda_r \in \sigma(X_r)}
f(\lambda_1, \ldots, \lambda_r)
\,
P_{X_1}(\lambda_1) \otimes \cdots \otimes P_{X_r}(\lambda_r).
}
\]
\end{theorem}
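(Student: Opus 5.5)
The plan is to obtain the formula by specializing the unified compact formula (Theorem~\ref{thm:unified_compact_formula}) in two stages: first remove the nilpotent corrections, then pass from integrals to sums over eigenvalues. Theorem~\ref{thm:unified_compact_formula} is stated for bounded operators. So we first work with a bounded approximation and then remove it by a limiting argument, using the self-adjoint spectral theorem as the reference object.

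\textbf{Step 1 (vanishing of nilpotents).} For a normal $X_j$ with pure point spectrum, every eigenvalue $\lambda_{j,k}$ is semisimple. Indeed, if $(X_j-\lambda)^2v=0$, then normality gives $\|(X_j-\lambda)v\|^2=\langle (X_j-\lambda)^*(X_j-\lambda)v,v\rangle$. This equals $\langle (X_j-\lambda)^*\cdots\rangle$, and since $\ker(X_j-\lambda)=\ker(X_j-\lambda)^*$ for normal operators, it forces $(X_j-\lambda)v=0$. Hence $N_{X_j}(\lambda_{j,k})=(X_j-\lambda_{j,k})P_{X_j}(\lambda_{j,k})=0$, the nilpotency index is $\nu_j(\lambda_{j,k})=1$, and every sum over $q_j$ with $1\le q_j\le \nu_j-1$ is empty. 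Consequently only the term $A=\emptyset$ survives in Theorem~\ref{thm:discrete_bounded_case} and Theorem~\ref{thm:unified_compact_formula}. The lifted spectral measures $d\widetilde E_j$ are then genuine orthogonal projection-valued measures, atomic on $\{\lambda_{j,k}\}$. The reduction of integrals to sums in Step~1 of the proof of Theorem~\ref{thm:discrete_bounded_case} then yields the boxed expression, with $P_{X_j}(\lambda_j)$ the orthogonal eigenprojections.

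\textbf{Step 2 (handling unboundedness).} This is the main obstacle, because Theorem~\ref{thm:unified_compact_formula} assumes bounded operators with compact spectrum and $f$ holomorphic on a polydisk. I would truncate spectrally. Let $Q_{j,M}=\sum_{|\lambda_{j,k}|\le M}P_{X_j}(\lambda_{j,k})$ and $X_{j,M}=X_jQ_{j,M}$. Each $X_{j,M}$ is bounded and normal with finitely many eigenvalues in its relevant range, so Step~1 applies and gives the boxed formula with sums restricted to $|\lambda_j|\le M$. Each partial sum is a product of mutually orthogonal projections $P_{X_1}(\lambda_1)\otimes\cdots\otimes P_{X_r}(\lambda_r)$, which form a resolution of the identity on $\mathcal H_\otimes$. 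Therefore, for $u$ in the dense set of finite combinations of eigenvector tensors, the series is eventually constant. For general $u$, the series converges strongly precisely on the domain where $\sum |f(\lambda_1,\dots,\lambda_r)|^2\|P(\lambda)u\|^2<\infty$. That domain is exactly the domain of $f(\widetilde X_1,\dots,\widetilde X_r)$ in the joint spectral theorem for the commuting self-adjoint/normal tuple $(\widetilde X_j)$.

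\textbf{Step 3 (identification of the limit).} To identify the strong limit with $f_\otimes(X_1,\dots,X_r)$, I would note that $X_{j,M}\to X_j$ in the strong resolvent sense: on each eigenvector the resolvents agree eventually, and they are uniformly bounded by $\operatorname{dist}(z,\mathbb R)^{-1}$ in the self-adjoint case. The Level~1 argument of Theorem~\ref{thm:multivariate_convergence_tensor_lifted} (i) can then be applied on the lifted space, with domain issues handled as in the classical spectral theorem. I expect the delicate point to be that $f$ need only be defined on $\sigma(X_1)\times\cdots\times\sigma(X_r)$. If unbounded $f$ is allowed, the identity must be read on the natural maximal domain, so I would state and check it there, rather than in operator norm.
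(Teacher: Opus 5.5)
\textbf{Step~1.} This is exactly the paper's proof. The paper cites the spectral theorem, notes that normality excludes Jordan structure so \(\widetilde N_j(\lambda_j)=0\), keeps only \(A=\emptyset\) in Theorem~\ref{thm:unified_compact_formula}, and reads the atomic measure as a sum. Your semisimplicity computation does not parse as written. The clean version is \(w=(X_j-\lambda)v\in\ker(X_j-\lambda)=\ker(X_j-\lambda)^*\), hence \(\|w\|^2=\langle v,(X_j-\lambda)^*w\rangle=0\).

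\textbf{Step~2.} The paper simply ignores that Theorem~\ref{thm:unified_compact_formula} is stated for bounded operators. You rightly flag this, but your repair does not work as written.
\begin{itemize}
\item \(X_{j,M}=X_jQ_{j,M}\) is not the restriction of \(X_j\) to \(\operatorname{Ran}Q_{j,M}\). It has eigenvalue \(0\) on \(\operatorname{Ran}(I-Q_{j,M})\), so \(f_\otimes(X_{1,M},\dots,X_{r,M})\) equals the restricted sum plus cross terms \(f(\dots,0,\dots)\) carrying factors \(I-Q_{j,M}\).
\item \(\{k:|\lambda_{j,k}|\le M\}\) need not be finite.
\item Applying Theorem~\ref{thm:unified_compact_formula} to \(X_{j,M}\) requires \(f\) holomorphic on a polydisk containing the truncated product spectrum. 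A neighborhood of \(\sigma(X_1)\times\cdots\times\sigma(X_r)\) does not provide this, e.g.\ \(f=1/(1+z_1^2)\) when \(\sigma(X_1)\supseteq\{\pm n:n\in\mathbb{N}\}\).
\end{itemize}

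\textbf{Step~3.} This cannot rest on Theorem~\ref{thm:multivariate_convergence_tensor_lifted}(i). That proof integrates over compact contours enclosing \(\sigma(X_j)\), which is impossible for unbounded spectra. Also, (A1) does not follow from pure point spectrum. Strong resolvent convergence gives strong convergence of \(f(X_M)\) for bounded continuous \(f\), but not on the maximal domain for unbounded \(f\), and here your route genuinely fails.

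Take \(X_1=X_2=\mathrm{diag}(1,2,3,\dots)\), the entire function \(f(z_1,z_2)=z_2e^{-z_1^2}\), and \(u=\sum_m m^{-1}e_{2m}\otimes e_m\). Then \(\sum_m|f(2m,m)|^2m^{-2}=\sum_m e^{-8m^2}<\infty\), so \(u\) lies in the natural domain. But for \(M/2<m\le M\) the truncated calculus multiplies the coefficient of \(e_{2m}\otimes e_m\) by \(f(0,m)=m\). Hence \(\|f_\otimes(X_{1,M},X_{2,M})u\|^2\ge M/2-1\to\infty\).

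\textbf{The fix.} It is already in the last two sentences of your Step~2: make them the whole proof and drop the truncation and Step~3.
\begin{itemize}
\item For unbounded normal \(X_j\), the natural meaning of \(f_\otimes(X_1,\dots,X_r)\) is \(f(\widetilde X_1,\dots,\widetilde X_r)\) in the joint spectral theorem.
\item The closures of the \(\widetilde X_j\) are strongly commuting normal operators. Their joint spectral measure \(F\) satisfies \(F(B_1\times\cdots\times B_r)=E_{X_1}(B_1)\otimes\cdots\otimes E_{X_r}(B_r)\).
\item For pure point \(E_{X_j}\), \(F\) is atomic with atoms \(P(\lambda):=P_{X_1}(\lambda_1)\otimes\cdots\otimes P_{X_r}(\lambda_r)\).
\item So \(\int f\,dF\) is exactly the boxed orthogonal series. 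It converges unconditionally on \(\{u:\sum_\lambda|f(\lambda)|^2\|P(\lambda)u\|^2<\infty\}\), which is all of \(\mathcal{H}_{\otimes}\) if \(f\) is bounded on the product spectrum.
\end{itemize}
This gives a rigorous version of the statement. That is more than the paper's proof delivers, since it feeds unbounded operators directly into a bounded-operator formula.
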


\begin{proof}
By the spectral theorem for self-adjoint (normal) operators,
\[
X_j = \sum_{\lambda_j \in \sigma(X_j)} \lambda_j \, P_{X_j}(\lambda_j).
\]

Since self-adjoint operators are normal, they are diagonalizable and possess no nontrivial Jordan blocks. Therefore, the nilpotent component vanishes:
\[
\widetilde{N}_j(\lambda_j) = 0.
\]

Substituting this into the unified compact formula (Theorem~\ref{thm:unified_compact_formula}) shows that:
\begin{itemize}
    \item Only \(A = \emptyset\) survives, because any \(A \neq \emptyset\) contains at least one factor \(\widetilde{N}_j(\lambda_j)^{q_j} = 0\).
    \item For \(A = \emptyset\), we have \(T_j(\lambda_j) = d\widetilde{E}_j(\lambda_j)\).
    \item The atomic spectral measure gives \(\int_{\sigma(X_j)} (\cdots) d\widetilde{E}_j(\lambda_j) = \sum_{\lambda_j \in \sigma(X_j)} (\cdots) P_{X_j}(\lambda_j)\).
\end{itemize}

Hence only the semisimple contribution survives, yielding the stated discrete spectral expansion.
\end{proof}

\subsubsection{Theorem: Continuous Spectrum Case (Via Spectral Theorem)}

We next consider general unbounded self-adjoint operators with continuous spectrum.

\begin{theorem}[Continuous-spectrum self-adjoint case]
Let \(X_1, \ldots, X_r\) be unbounded self-adjoint (or normal) operators with spectral measures \(dE_{X_j}(\lambda_j)\).

Let \(f(z_1, \ldots, z_r)\) be a Borel measurable or holomorphic function defined on a neighborhood containing the product spectrum
\[
\sigma(X_1) \times \cdots \times \sigma(X_r).
\]

Then the tensor-lifted functional calculus reduces to the classical multivariate spectral integral:
\[
\boxed{
f_{\otimes}(X_1, \ldots, X_r)
=
\int_{\sigma(X_1)}
\cdots
\int_{\sigma(X_r)}
f(\lambda_1, \ldots, \lambda_r)
\,
dE_{X_1}(\lambda_1) \otimes \cdots \otimes dE_{X_r}(\lambda_r).
}
\]
\end{theorem}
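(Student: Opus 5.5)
The plan mirrors the proof of the pure point spectrum case above, but replaces atomic sums by genuine projection-valued integrals. The unified compact formula (Theorem~\ref{thm:unified_compact_formula}) is stated for bounded operators, so it cannot be invoked verbatim. I would therefore \emph{define} the right-hand side via the classical multivariate spectral theorem and then check that this agrees with the tensor-lifted calculus.

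\textbf{Step 1: the product spectral measure.} For each $j$, the spectral theorem gives a projection-valued measure $E_{X_j}$ on $\mathbb{R}$ (or $\mathbb{C}$ in the normal case) with $X_j=\int \lambda_j\,dE_{X_j}(\lambda_j)$. The lifted measures $\widetilde{E}_j = I\otimes\cdots\otimes E_{X_j}\otimes\cdots\otimes I$ are projection-valued and commute pairwise, as shown in Section~\ref{subsec:lifted_decomposition}. Hence, on rectangles, the assignment
\[
E_\otimes(B_1\times\cdots\times B_r)=E_{X_1}(B_1)\otimes\cdots\otimes E_{X_r}(B_r)
\]
is a finitely additive family of commuting orthogonal projections. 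Weak countable additivity holds on elementary tensors and extends by density. This family then extends to a projection-valued measure on the Borel sets of the product spectrum, by the standard product-measure theorem for commuting spectral measures. Each $\widetilde{X}_j$ is the closure of $\int \lambda_j\,dE_\otimes$, by checking on the dense span of elementary tensors of vectors in bounded spectral subspaces.

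\textbf{Step 2: vanishing of nilpotent terms.} Normality gives $(X_j-\lambda_j I)\,dE_{X_j}(\lambda_j)=0$ in the sense of Theorem~\ref{thm: spectral operator decomposition} and the subsequent Remark. Therefore $\widetilde{N}_j\equiv 0$, and in the unified expansion every subset $A\neq\emptyset$ contributes zero, exactly as in the pure point proof. What remains is $\mathcal{S}_0=\int f\,dE_\otimes$.

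\textbf{Step 3: well-definedness for unbounded operators.} For Borel $f$, $\int f\,dE_\otimes$ is defined as a possibly unbounded normal operator on the domain
\[
\{u:\int|f|^2\,d\langle E_\otimes u,u\rangle<\infty\}.
\]
For holomorphic $f$, I would show consistency with the power-series and contour definitions used earlier. First truncate to bounded spectral subspaces using $E_\otimes(K_n)$, where $K_n=\prod_j\{|\lambda_j|\le n\}$. On each $K_n$ the operators are bounded and normal, so the bounded case (Theorem~\ref{thm:continuous_spectrum_bounded} with $\widetilde{N}_j=0$) gives the identity on $E_\otimes(K_n)\mathcal{H}_\otimes$. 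Then let $n\to\infty$ strongly, using monotone convergence of $\int_{K_n}|f|^2\,d\langle E_\otimes u,u\rangle$.

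\textbf{Main obstacle.} The hardest step is the compatibility argument in Step~3. For unbounded $X_j$, the polydisk hypothesis of Theorem~\ref{thm:unified_compact_formula} fails, so "the tensor-lifted calculus" is only meaningful through this truncation-and-limit procedure. My first approach would be to take the limit as the definition. I would then verify that it agrees with the Level~1 strong-limit framework of Section~\ref{sec:two_level_convergence}, invoking strong resolvent convergence of $X_jE_{X_j}(\{|\lambda|\le n\})$ to $X_j$.
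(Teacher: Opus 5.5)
Your proposal is correct in substance, and it is more careful than the paper's proof. The paper's proof is a direct specialization. It writes $X_j=\int\lambda_j\,dE_{X_j}(\lambda_j)$ and asserts $\widetilde{N}_j(\lambda_j)=0$. It then substitutes into the unified compact formula (Theorem~\ref{thm:unified_compact_formula}), so every term with $A\neq\emptyset$ vanishes, and reads off $T_j=dE_{X_j}$ for $A=\emptyset$. It never addresses two problems:
\begin{itemize}
\item Theorem~\ref{thm:unified_compact_formula} is stated only for bounded spectral operators with compact spectrum and $f$ holomorphic on a polydisk.
\item For Borel $f$, the left-hand side has no independent definition.
\end{itemize}
Your Step~2 is exactly the paper's mechanism. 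What you add is the following.
\begin{itemize}
\item[(i)] A genuine construction of the product projection-valued measure $E_\otimes$ and the identification $\widetilde{X}_j=\int\lambda_j\,dE_\otimes$. The paper tacitly assumes both when it writes $dE_{X_1}\otimes\cdots\otimes dE_{X_r}$.
\item[(ii)] The recognition that for Borel $f$ the identity is definitional.
\item[(iii)] A reduction to the bounded pieces $E_\otimes(K_n)\mathcal{H}_\otimes$ followed by a monotone-convergence limit. This also yields the correct domain when $f$ is unbounded on the spectrum.
\end{itemize}
The paper's version buys brevity; yours gives $f_\otimes$ an actual meaning when the $X_j$ are unbounded.

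One repair is needed in Step~3. On the truncated pieces you cannot simply cite Theorem~\ref{thm:continuous_spectrum_bounded}. Its polydisk hypothesis can fail even though the truncated product spectrum is compact. For example, $f(z)=1/(1+z^2)$ is holomorphic on a neighborhood of $\mathbb{R}$, but no open disk containing $[-n,n]$ with $n\ge 1$ avoids the poles $\pm i$.

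Use instead the contour definition of Section~\ref{subsec:tensor_lifting_step0}. The set $\prod_j\bigl(\sigma(X_j)\cap\{|\lambda_j|\le n\}\bigr)$ is compact, so the tube lemma gives open sets $U_j\supseteq\sigma(X_j)\cap\{|\lambda_j|\le n\}$ with $f$ holomorphic on $U_1\times\cdots\times U_r$. Hence admissible contours exist. For normal operators, Fubini together with Cauchy's integral formula turns the iterated contour integral directly into $\int_{K_n}f\,dE_\otimes$. No power series or nilpotent bookkeeping is needed.

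Finally, drop your proposed cross-check against the Level~1 theorems of Section~\ref{sec:two_level_convergence}. Those theorems assume compact resolvent and contours enclosing the whole spectrum, and neither is available for unbounded continuous spectrum. Your truncation limit already serves as the definition.
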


\begin{proof}
The spectral theorem gives
\[
X_j = \int_{\sigma(X_j)} \lambda_j \, dE_{X_j}(\lambda_j).
\]

Because each operator is self-adjoint (normal), the generalized nilpotent component vanishes:
\[
\widetilde{N}_j(\lambda_j) = 0.
\]

Applying the unified compact formula (Theorem~\ref{thm:unified_compact_formula}):
\begin{itemize}
    \item Every higher-order derivative correction term corresponding to \(A \neq \emptyset\) contains a factor involving \(\widetilde{N}_j(\lambda_j)^{q_j} = 0\) and therefore vanishes identically.
    \item Only the term \(A = \emptyset\) survives.
    \item For \(A = \emptyset\), we have \(T_j(\lambda_j) = d\widetilde{E}_j(\lambda_j) = dE_{X_j}(\lambda_j)\).
\end{itemize}

The remaining contribution is precisely the classical multivariate spectral integral.
\end{proof}

\subsubsection{Theorem: Hybrid Spectrum Case}

The preceding two cases combine naturally when the operators possess both discrete and continuous spectral components.

\begin{theorem}[Hybrid-spectrum self-adjoint case]
Let \(X_1, \ldots, X_r\) be unbounded self-adjoint (normal) operators with hybrid spectrum consisting of both discrete and continuous components.

Then the unified compact formula reduces to a combination of:
\begin{enumerate}
    \item discrete sums over eigenvalues,
    \item spectral integrals over continuous spectrum,
\end{enumerate}
with only the semisimple contribution \(A = \emptyset\) present.

Explicitly,
\[
\boxed{
f_{\otimes}(X_1, \ldots, X_r)
=
\sum_{\lambda_1^{(d)} \in \sigma_d(X_1)}
\cdots
\sum_{\lambda_r^{(d)} \in \sigma_d(X_r)}
\;
\int_{\sigma_c(X_1)}
\cdots
\int_{\sigma_c(X_r)}
f(\lambda_1, \ldots, \lambda_r)
\,
dE_{X_1}(\lambda_1) \otimes \cdots \otimes dE_{X_r}(\lambda_r),
}
\]
where the notation separates the discrete and continuous spectral variables, and the expression includes all mixed terms (some variables summed over discrete eigenvalues, others integrated over continuous spectrum).
\end{theorem}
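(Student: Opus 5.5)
The proof follows the same route as the two preceding self-adjoint cases, combined with an additive splitting of each spectral measure. For each $j$, the spectral theorem for the self-adjoint (normal) operator $X_j$ gives a projection-valued measure $E_{X_j}$. By Lebesgue decomposition, this measure splits into a pure point part and a continuous part:
\[
E_{X_j} = E_{X_j}^{(d)} + E_{X_j}^{(c)}, \qquad E_{X_j}^{(d)}(\cdot)=\sum_{\lambda\in\sigma_d(X_j)} \delta_\lambda(\cdot)\,P_{X_j}(\lambda).
\]
The two parts are orthogonal projection-valued measures: $E_{X_j}^{(d)}(\Delta)E_{X_j}^{(c)}(\Delta')=0$ and $P_{X_j}(\lambda)\,dE_{X_j}^{(c)}=0$. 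This is the self-adjoint specialization of the relations recorded for the Hybrid Spectrum Decomposition. I then lift each part to $\mathcal{H}_\otimes$ as in Section~\ref{subsec:lifted_decomposition}.

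Next I invoke the Unified Compact Formula (Theorem~\ref{thm:unified_compact_formula}). In the unbounded case I use the continuous-spectrum self-adjoint theorem just proved instead, since its proof only needs the spectral theorem. Normality gives $\widetilde N_j(\lambda_j)=0$ on both the discrete and continuous parts, so every term with $A\neq\emptyset$ contains a zero factor and vanishes. What remains is
\[
f_\otimes(X_1,\dots,X_r)=\int\!\cdots\!\int f(\lambda_1,\dots,\lambda_r)\,dE_{X_1}(\lambda_1)\otimes\cdots\otimes dE_{X_r}(\lambda_r).
\]

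I substitute $dE_{X_j}=dE^{(d)}_{X_j}+dE^{(c)}_{X_j}$ into each tensor factor and expand multilinearly. This produces $2^r$ terms indexed by subsets $D\subseteq\{1,\dots,r\}$, where $D$ is the set of variables taken from the discrete part. In each term the atomic factors become sums over $\sigma_d(X_j)$ with $P_{X_j}(\lambda_j)$, exactly as in Step~1 of the proof of Theorem~\ref{thm:discrete_bounded_case}. The remaining factors stay as integrals over $\sigma_c(X_j)$. The boxed expression is to be read as the sum of all these mixed terms, matching the statement's remark that ``the expression includes all mixed terms.'' Orthogonality of the two parts guarantees that no cross terms are double counted.

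The main obstacle is justifying the multilinear expansion and the product-measure integral for unbounded operators and general Borel or holomorphic $f$. In that setting $f$ may be unbounded on the product spectrum, so the integral is an unbounded operator. My plan is to use the product projection-valued measure $E_{X_1}\otimes\cdots\otimes E_{X_r}$ on $\mathbb{C}^r$, whose existence follows from the commuting lifted spectral measures, and to define $f_\otimes$ as the corresponding spectral integral on its natural domain. Additivity of this product measure over the $2^r$ disjoint product sets $\prod_j S_j$, with $S_j\in\{\sigma_d(X_j),\sigma_c(X_j)\}$, then gives the decomposition. I would first prove it for bounded $f$ by dominated convergence in the strong operator topology. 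I would then extend to general $f$ by truncating with $f\mathbf 1_{\{|f|\le n\}}$ and passing to the limit on the domain $\{u:\int|f|^2\,d\langle E u,u\rangle<\infty\}$.
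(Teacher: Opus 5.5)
Your proposal is correct and follows the paper's own route: split each spectral measure into its atomic part on $\sigma_d(X_j)$ and its continuous part, discard every $A\neq\emptyset$ term because $\widetilde N_j(\lambda_j)=0$, and expand the surviving $A=\emptyset$ integral multilinearly into the $2^r$ discrete/continuous sectors. You also add rigor the paper's proof lacks: you realize $f_\otimes$ as an integral against the product projection-valued measure, which avoids applying Theorem~\ref{thm:unified_compact_formula} outside its bounded-operator hypotheses as the paper does, and you truncate $f$ to handle unbounded integrands; this strengthens the argument without changing it.
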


\begin{proof}
The spectral theorem decomposes each operator's spectral measure into discrete and continuous parts:
\[
dE_{X_j}(\lambda_j) = dE_{X_j}^{(d)}(\lambda_j) + dE_{X_j}^{(c)}(\lambda_j),
\]
where \(dE_{X_j}^{(d)}(\lambda_j) = \sum_{k} P_{X_j}(\lambda_{j,k}) \, \delta(\lambda_j - \lambda_{j,k})\) and \(dE_{X_j}^{(c)}(\lambda_j)\) is continuous.

Since the operators remain self-adjoint (normal), all nilpotent components vanish identically (\(\widetilde{N}_j(\lambda_j) = 0\)). Consequently:
\begin{itemize}
    \item Every term with \(A \neq \emptyset\) in the unified compact formula disappears.
    \item The remaining contribution \(A = \emptyset\) becomes
    \[
    f_{\otimes}(X_1, \ldots, X_r)
    = \int_{\sigma(X_1)} \cdots \int_{\sigma(X_r)} f(\lambda_1, \ldots, \lambda_r) \,
    \bigotimes_{j=1}^r dE_{X_j}(\lambda_j).
    \]
\end{itemize}

Substituting the decomposition of each spectral measure into discrete and continuous parts and expanding the tensor product yields all combinations of sums (for discrete components) and integrals (for continuous components). This produces the stated hybrid representation.
\end{proof}

\subsubsection{Clarification: Nilpotent Component Vanishes for Self-Adjoint Operators}

The key simplification in the self-adjoint setting is the disappearance of all nilpotent contributions.

\begin{proposition}[Vanishing of nilpotent components]
Let \(X\) be a self-adjoint (or normal) operator with spectral measure \(dE_X(\lambda)\). Then the generalized nilpotent component satisfies
\[
(X - \lambda I) \, dE_X(\lambda) = 0
\]
in the spectral representation.

Equivalently,
\[
\widetilde{N}(\lambda) = 0
\]
for all \(\lambda \in \sigma(X)\), and the nilpotency index \(\nu(\lambda) = 1\) for every spectral value.
\end{proposition}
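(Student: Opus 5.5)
The plan is to reduce the claim to the multiplicativity of the Borel functional calculus for normal operators. Write $X = \int_{\sigma(X)} \mu \, dE_X(\mu)$ by the spectral theorem. The expression $(X - \lambda I)\,dE_X(\lambda)$ is interpreted, as in Theorem~\ref{thm: spectral operator decomposition}, fiberwise: it is the limit of $(X-\lambda I)E_X(\Delta)$ as a Borel set $\Delta \ni \lambda$ shrinks to $\lambda$. Equivalently, it is the density of the operator-valued measure $\Delta \mapsto \int_\Delta (X-\lambda I)\,dE_X$ evaluated on the diagonal.

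First, for any bounded Borel set $\Delta$, the range of $E_X(\Delta)$ lies in $\operatorname{dom}(X)$. The product rule of the spectral calculus then gives
\[
(X-\lambda I)E_X(\Delta) = \int_\Delta (\mu - \lambda)\, dE_X(\mu).
\]
Taking norms, $\|(X-\lambda I)E_X(\Delta)\| \le \sup_{\mu\in\Delta}|\mu-\lambda|$, which tends to $0$ as $\operatorname{diam}\Delta \to 0$. This is the precise sense in which the fiberwise nilpotent component vanishes. Integrating over $\sigma(X)$ then shows that
\[
\int_{\sigma(X)} (X-\lambda I)\,dE_X(\lambda) = \int_{\sigma(X)}(\mu-\mu)\,dE_X(\mu) = 0,
\]
so the second term of the decomposition in Theorem~\ref{thm: spectral operator decomposition} vanishes identically.

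Second, consider atoms. If $\lambda$ is an eigenvalue, $E_X(\{\lambda\})$ is the orthogonal projection onto $\ker(X-\lambda I)$, so $(X-\lambda I)E_X(\{\lambda\}) = 0$ exactly. This holds because normality forces $\ker(X-\lambda I)^2 = \ker(X-\lambda I)$: from $\|(X-\lambda)^*v\| = \|(X-\lambda)v\|$ one gets that $(X-\lambda)^2 v = 0$ implies $(X-\lambda)v = 0$, so there are no Jordan chains. Consequently $\widetilde{N}(\lambda) = N(\lambda)^1 = 0$, and the nilpotency index is $\nu(\lambda) = 1$. Lifting to $\mathcal{H}_\otimes$ preserves this, since $\widetilde N(\lambda) = I\otimes\cdots\otimes N(\lambda)\otimes\cdots\otimes I$.

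The main obstacle is giving rigorous meaning to $(X-\lambda I)\,dE_X(\lambda)$ at non-atomic points of the continuous spectrum, where $E_X(\{\lambda\})=0$ and the statement is trivially true but vacuous. I would handle this by adopting the limiting/density interpretation above and proving the norm estimate uniformly in $\lambda$. For unbounded $X$, I would restrict to bounded $\Delta$ so that every product stays in the domain. Once this is in place, the proposition is stated in exactly the form used in the three preceding self-adjoint theorems.
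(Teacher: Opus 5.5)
Your argument is correct and rests on the same mechanism as the paper's proof: by the spectral theorem, $X$ acts as multiplication by the spectral variable, so $X-\lambda I$ is annihilated on the fiber at $\lambda$. The paper, however, stops at the assertion that the ``infinitesimal spectral projection'' localizes $X$ to multiplication by $\lambda$, which has no precise meaning at non-atomic points. You supply a precise meaning through three ingredients.
\begin{itemize}
\item The product rule $(X-\lambda I)E_X(\Delta)=\int_\Delta(\mu-\lambda)\,dE_X(\mu)$ for bounded $\Delta$.
\item The bound $\|(X-\lambda I)E_X(\Delta)\|\le\sup_{\mu\in\Delta}|\mu-\lambda|$.
\item Behind your formula $\int(\mu-\mu)\,dE_X(\mu)=0$, the fact that tagged sums $\sum_k(X-\lambda_k I)E_X(\Delta_k)=\int h\,dE_X$ satisfy $\|h\|_\infty\le\max_k\operatorname{diam}\Delta_k$.
\end{itemize}
This shows that the radical part of $X$ is literally zero, i.e.\ $X$ is of scalar type in Dunford's sense. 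Your criterion also has real content. For a spectral operator with nonzero radical part, such as multiplication by $t$ on $L^2[0,1]$ tensored with a $2\times 2$ Jordan block, $\|(X-\lambda I)E(\Delta)\|$ tends to $1$ rather than $0$ as $\Delta$ shrinks.

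You also add an ingredient the paper lacks: the identity $\|T^*v\|=\|Tv\|$ for the normal operator $T=X-\lambda I$. It gives $\ker T^2=\ker T$, and hence $\ker T^k=\ker T$ for all $k$. This is what justifies $\nu(\lambda)=1$ when the projectors are read algebraically, as Riesz projectors onto generalized eigenspaces, which is how the discrete-spectrum theorems use them.

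Two points of emphasis. First, the identity $(X-\lambda I)E_X(\{\lambda\})=0$ already follows from your product rule with $\Delta=\{\lambda\}$, since $(\mu-\lambda)\chi_{\{\lambda\}}(\mu)\equiv 0$. The kernel argument is needed only to rule out Jordan chains, not for that identity. Second, the vanishing of the tagged sums depends on the disjointness of the $\Delta_k$, which makes the bound a maximum rather than a sum of diameters. A per-fiber estimate that is merely uniform in $\lambda$ would not give it, so that step should be stated through the single function $h$ as above.
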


\begin{proof}
Self-adjoint operators are normal and therefore admit a diagonal spectral representation. In particular, the spectral theorem implies that the operator acts as multiplication by the spectral variable:
\[
X = \int_{\sigma(X)} \lambda \, dE_X(\lambda).
\]

For any Borel set \(\Delta\) with \(\lambda \notin \overline{\Delta}\), we have \(dE_X(\Delta) = 0\) near \(\lambda\). For sets containing \(\lambda\), the range of \(dE_X(\{\lambda\})\) (if \(\lambda\) is an eigenvalue) or the infinitesimal spectral projection (for continuous spectrum) localizes the operator to act as multiplication by \(\lambda\). Hence:
\[
(X - \lambda I) \, dE_X(\lambda) = 0.
\]

Therefore, no generalized Jordan structure exists, and all nilpotent components vanish identically. The nilpotency index satisfies \(\nu(\lambda) = 1\) for all \(\lambda\).
\end{proof}

\noindent
\textbf{Consequence for the unified compact formula:}
Since \(\nu_j(\lambda_j) = 1\) for all \(j\) and all \(\lambda_j\), the inner sum over \(q_j\) with \(1 \le q_j \le \nu_j(\lambda_j)-1\) is empty. Therefore, only the subset \(A = \emptyset\) contributes, and the unified formula collapses to the classical spectral calculus.

\begin{remark}[Unification summary]
The preceding results demonstrate that the classical spectral theorem is precisely the semisimple sector of the unified compact formula. Specifically:
\begin{itemize}
    \item For pure point spectrum \(\to\) discrete sums.
    \item For continuous spectrum \(\to\) spectral integrals.
    \item For hybrid spectra \(\to\) mixed sums and integrals.
\end{itemize}
All cases share the feature \(A = \emptyset\) because \(\widetilde{N}_j(\lambda_j) = 0\) for normal operators.

The generalized tensor-lifted framework therefore extends the classical self-adjoint theory by allowing nontrivial nilpotent derivative corrections in non-normal or non-semisimple settings (where \(A \neq \emptyset\) terms become active).
\end{remark}

\subsection{Unbounded Non-Self-Adjoint Operators with Compact Resolvent}

We now consider the most general class treated by the present framework: unbounded non-self-adjoint operators with compact resolvent. In contrast to the self-adjoint setting (Section~\ref{subsec:self_adjoint_cases}), the nilpotent components generally do not vanish, and therefore the full projector--nilpotent structure of the unified compact formula (Theorem~\ref{thm:unified_compact_formula}) becomes essential.

The functional calculus in this setting contains both:
\begin{enumerate}
    \item semisimple spectral contributions corresponding to \( A = \emptyset \),
    \item higher-order nilpotent derivative corrections corresponding to \( A \neq \emptyset \).
\end{enumerate}

The convergence theory developed in Section~\ref{sec:two_level_convergence} shows that these generalized algebraic structures are preserved under finite-dimensional approximation. Under Level~1 assumptions, the calculus converges in the strong operator topology, establishing existence of the infinite-dimensional calculus. Under the stronger Level~2 assumptions, the convergence improves to operator norm convergence with explicit quantitative stability bounds.

The following subsections present the convergence theorems for discrete-spectrum non-self-adjoint operators with compact resolvent. Each theorem is derived directly from the unified compact formula and respects its full projector--nilpotent structure.

\subsubsection{Theorem: Discrete Spectrum Case (Level 1 — SOT Convergence)}

We first consider the existence theory under Level~1 assumptions. At this level, the finite-dimensional truncated calculi converge to the true infinite-dimensional calculus in the strong operator topology. The nilpotent structure—all terms with \( A \neq \emptyset \)—is approximated faithfully, though no explicit rate of convergence is provided.

\begin{theorem}[Level~1 convergence for discrete-spectrum non-self-adjoint operators]
\label{thm:level1_non_self_adjoint_integrated}
Let \(X_1, \ldots, X_r\) be closed densely defined non-self-adjoint operators acting on separable Hilbert spaces \(\mathcal{H}_1, \ldots, \mathcal{H}_r\). Assume:
\begin{enumerate}
    \item Each operator has compact resolvent (hence discrete spectrum with finite algebraic multiplicities).
    
    \item Each operator admits the global projector--nilpotent decomposition
    \[
    X_j = \sum_{\lambda_j \in \sigma(X_j)} \left( \lambda_j P_j(\lambda_j) + N_j(\lambda_j) \right),
    \]
    where \(P_j(\lambda_j)\) are spectral projectors, \(N_j(\lambda_j)\) are nilpotent operators satisfying \(N_j(\lambda_j)^{\nu_j(\lambda_j)} = 0\), and the sums converge in the strong operator topology on the domain of \(X_j\).
    
    \item The finite-dimensional compressions \(X_{j,n} = P_{j,n} X_j P_{j,n}\) satisfy the Level~1 assumptions (A1)--(A3) from Section~\ref{subsec:level1_assumptions}.
\end{enumerate}

Let \(f(z_1, \ldots, z_r)\) be holomorphic on an open neighborhood containing the product spectrum
\[
\sigma(X_1) \times \cdots \times \sigma(X_r).
\]

Then the tensor-lifted functional calculus derived from the unified compact formula satisfies
\[
\boxed{
\operatorname*{s-}\lim_{n\to\infty}
f_{\otimes}(X_{1,n}, \ldots, X_{r,n})
=
f_{\otimes}(X_1, \ldots, X_r)
}
\]
in the strong operator topology on \(\mathcal{H}_{\otimes} = \mathcal{H}_1 \otimes \cdots \otimes \mathcal{H}_r\).

Moreover, all nilpotent derivative correction terms indexed by \(A \neq \emptyset\) converge simultaneously in the strong operator topology.
\end{theorem}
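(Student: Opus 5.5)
The plan is to reduce the statement to the multivariate convergence result already proved, Theorem~\ref{thm:multivariate_convergence_tensor_lifted}(i). Then, for the second assertion, we upgrade it term by term, so that each nilpotent correction indexed by $A \neq \emptyset$ is seen to converge on its own.

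\textbf{Step 1: strong resolvent convergence for each factor.} By hypothesis~3 each pair $(X_j, X_{j,n})$ satisfies (A1)--(A3). By the discussion in Section~\ref{subsec:level1_assumptions} this gives $(zI - X_{j,n})^{-1} \to (zI - X_j)^{-1}$ strongly for every $z \in \rho(X_j)$.

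\textbf{Step 2: lift to the tensor product.} We pass to $\mathcal{H}_{\otimes}$ via the tensor resolvents $\bigotimes_j (z_j I - X_{j,n})^{-1}$. On elementary tensors $u_1 \otimes \cdots \otimes u_r$, strong convergence follows from the telescoping tensor identity together with uniform resolvent bounds on compact contour pieces. Density of finite sums of elementary tensors and Banach--Steinhaus (the family is uniformly bounded) extend this to all $u \in \mathcal{H}_{\otimes}$. Dominated convergence in $(z_1, \ldots, z_r)$ then yields $f_{\otimes}(X_{1,n}, \ldots, X_{r,n}) \to f_{\otimes}(X_1, \ldots, X_r)$ strongly. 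This is exactly the argument of Theorem~\ref{thm:multivariate_convergence_tensor_lifted}(i), which we simply invoke.

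\textbf{Step 3: a contour expression for each term of the unified compact formula.} For an isolated eigenvalue $\lambda_j$ with a small circle $\gamma_j(\lambda_j)$ separating it from the rest of $\sigma(X_j)$, we use the Riesz representations
\[
P_j(\lambda_j) = \frac{1}{2\pi i}\oint_{\gamma_j(\lambda_j)} (zI - X_j)^{-1}\,dz,
\qquad
N_j(\lambda_j)^{q} P_j(\lambda_j) = \frac{1}{2\pi i}\oint_{\gamma_j(\lambda_j)} (z-\lambda_j)^{q}(zI - X_j)^{-1}\,dz .
\]
The $(A,\{q_j\})$-term of Theorem~\ref{thm:discrete_bounded_case} at a fixed eigenvalue tuple is therefore an $r$-fold contour integral of $f$-derivative coefficients against tensor products of resolvents. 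We define the corresponding truncated term by the \emph{same} contours with $X_{j,n}$ in place of $X_j$. Step~1 and the tensor argument of Step~2 then give strong convergence of each individual term.

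Defining the truncated objects through fixed contours, rather than through eigenvalues of $X_{j,n}$, is deliberate. It sidesteps the fact that Jordan blocks of $X_j$ may split under truncation, so that no eigenvalue-by-eigenvalue matching is needed. We do need that $\gamma_j(\lambda_j) \subset \rho(X_{j,n})$ with uniformly bounded resolvent for large $n$. I would try to extract this from (A3) via a uniform off-diagonal (Combes--Thomas type) estimate. This is one delicate point, since strong resolvent convergence alone does not exclude spectral pollution.

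\textbf{Step 4: summing over the infinitely many eigenvalue tuples.} I expect this to be the main obstacle. Compact resolvent makes $\sigma(X_j)$ infinite and unbounded, so the sum over $(\lambda_1, \ldots, \lambda_r)$ is infinite, and the strong limit cannot be exchanged with it without uniform control. I would first prove the result for vectors $u$ in the algebraic tensor product of finite sums of generalized eigenvectors; there only finitely many tuples contribute for $X_j$. I would then use the uniform boundedness of $\|f_{\otimes}(X_{1,n}, \ldots, X_{r,n})\|$ in $n$, which comes from Step~2 together with the contour bounds, and an $\varepsilon/3$ argument. Density of finite spans of root vectors is required here. It is not automatic for non-self-adjoint operators with compact resolvent, so I would either add completeness of root vectors as an implicit part of hypothesis~2 (the strongly convergent decomposition essentially presupposes it) or argue directly from the SOT convergence of $\sum_{\lambda_j}$ asserted there. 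With that density in hand, the total convergence follows from Step~2, and the simultaneous convergence of all $A \neq \emptyset$ terms follows from Step~3.
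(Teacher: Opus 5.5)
Apart from your shortcut through Theorem~\ref{thm:multivariate_convergence_tensor_lifted} for the total, your route is the paper's. Both use fixed small circles $\gamma_j(\lambda_j)$ around the eigenvalues of $X_j$ and truncated pieces $(X_{j,n}-\lambda_j I)^{q}P_{j,n}(\lambda_j)$, which is exactly your $\frac{1}{2\pi i}\oint_{\gamma_j(\lambda_j)}(z-\lambda_j)^{q}(zI-X_{j,n})^{-1}\,dz$. Both then prove termwise strong convergence and finish with density of finite sums of root subspaces plus a uniform bound.

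The two points you flag can be closed from what you already assume.
\begin{itemize}
\item If Step~1 holds, as stated, at every $z\in\rho(X_j)$, Banach--Steinhaus gives $\sup_n\|(zI-X_{j,n})^{-1}\|<\infty$ for each such $z$. The Neumann series $(wI-X_{j,n})^{-1}=\bigl(I-(z-w)(zI-X_{j,n})^{-1}\bigr)^{-1}(zI-X_{j,n})^{-1}$ and compactness of $\gamma_j(\lambda_j)$ turn this into a uniform bound on the circle, which also excludes pollution there. No Combes--Thomas estimate is needed.
\item Hypothesis~2 forces completeness of the root vectors. Every partial sum lies in the closed span $M_j$ of the root subspaces, so $X_j(\mathcal{D}(X_j))\subseteq M_j$. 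Also $\operatorname{Ran}(I-P_j(0))\subseteq X_j(\mathcal{D}(X_j))$ and $\operatorname{Ran}P_j(0)\subseteq M_j$, with $P_j(0)=0$ if $0\in\rho(X_j)$.
\end{itemize}

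\textbf{The genuine gap} is the uniform bound your $\varepsilon/3$ argument rests on. You say $\sup_n\|f_{\otimes}(X_{1,n},\ldots,X_{r,n})\|<\infty$ comes from Step~2 and the contour bounds. But compact resolvent makes $\sigma(X_j)$ unbounded, so no compact contour encloses it, and your resolvent bounds hold only on compact pieces. Step~2 has the same defect, since Theorem~\ref{thm:multivariate_convergence_tensor_lifted} integrates over compact contours enclosing the spectra.

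In fact the bound fails in general. Take the entire function $f(z_1,\ldots,z_r)=z_1$. Then $f_{\otimes}(X_{1,n},\ldots,X_{r,n})=X_{1,n}\otimes I\otimes\cdots\otimes I$, whose norm $\|P_{1,n}X_1P_{1,n}\|$ is nondecreasing. It tends to $\infty$ because $X_1$ is unbounded and $\bigcup_n\operatorname{Ran}P_{1,n}$ is a core. By the uniform boundedness principle, $f_{\otimes}(X_{1,n},\ldots,X_{r,n})u$ then fails to converge for some $u\in\mathcal{H}_{\otimes}$. An additional hypothesis is unavoidable at exactly this point. One option is to take $f$ bounded and holomorphic on a product of sectors containing the spectra, with resolvent estimates making $|f(z)|\prod_j\|(z_jI-X_{j,n})^{-1}\|$ integrable along unbounded contours uniformly in $n$. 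Another is a uniform, Riesz-basis-type bound on partial spectral sums.

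Even granting a bound on the total, you would not get the ``Moreover'' clause.
\begin{itemize}
\item Step~3 only gives convergence of each fixed-tuple term. For a whole sector $\mathcal{S}_A$, the $\varepsilon/3$ argument needs $\sup_n\|\mathcal{S}_A^{(n)}\|<\infty$ for that sector separately.
\item Your fixed-contour truncated terms do not reassemble $f_{\otimes}(X_{1,n},\ldots,X_{r,n})$. Once a Jordan block splits, $(X_{j,n}-\lambda_j I)P_{j,n}(\lambda_j)$ is no longer nilpotent of order $\nu_j(\lambda_j)$, so the cut $q_j\le\nu_j(\lambda_j)-1$ drops a Taylor tail. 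Eigenvalues of $X_{j,n}$ lying outside all the circles are also not captured.
\item For $u$ a finite combination of root vectors, only finitely many tuples act nontrivially on the limit side. On the truncated side, the number of nonzero terms grows with $n$; each tends to $0$, but nothing controls their sum.
\end{itemize}

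The paper's Step~5 has the same hole. It bounds each term on a finite-dimensional subspace and then appeals to a dominated convergence theorem that needs precisely the missing uniform bound. You have located the weak point correctly, but the proposal does not close it.
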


\begin{proof}
We prove the theorem by establishing strong convergence of each spectral and nilpotent component, then assembling the result via the discrete projector--nilpotent expansion.

\medskip
\noindent
\textbf{Step 1: Direct sum decomposition.}
Since each \(X_j\) has compact resolvent, the Hilbert space decomposes into a direct sum of finite-dimensional generalized eigenspaces:
\[
\mathcal{H}_j = \bigoplus_{\lambda_j \in \sigma(X_j)} \mathcal{H}_j^{(\lambda_j)},
\]
where \(\mathcal{H}_j^{(\lambda_j)} = \operatorname{Ran} P_j(\lambda_j)\) and \(P_j(\lambda_j)\) is the spectral projector. This decomposition is orthogonal in the sense that \(P_j(\lambda_j)P_j(\lambda_j') = 0\) for \(\lambda_j \neq \lambda_j'\).

\medskip
\noindent
\textbf{Step 2: Strong convergence of spectral projectors under Level~1.}
For each fixed eigenvalue \(\lambda_j\), choose a contour \(\gamma_{\lambda_j}\) in the complex plane that encloses \(\lambda_j\) and no other point of \(\sigma(X_j)\). Because the spectrum is discrete and the resolvent converges strongly on the contour (Level~1), the Riesz projection formula
\[
P_{j,n}(\lambda_j) = \frac{1}{2\pi i} \oint_{\gamma_{\lambda_j}} (zI - X_{j,n})^{-1} \, dz
\]
converges strongly to
\[
P_j(\lambda_j) = \frac{1}{2\pi i} \oint_{\gamma_{\lambda_j}} (zI - X_j)^{-1} \, dz
\]
for all sufficiently large \(n\) (the contour can be chosen to avoid the eigenvalues of \(X_{j,n}\) as \(n\to\infty\) by Level~1 assumptions). Strong convergence of the resolvents on the compact contour implies strong convergence of the integrals. Hence
\[
\operatorname*{s-}\lim_{n\to\infty} P_{j,n}(\lambda_j) = P_j(\lambda_j).
\]

\medskip
\noindent
\textbf{Step 3: Strong convergence of nilpotent components.}
For each fixed eigenvalue \(\lambda_j\), define the nilpotent component on the finite-dimensional subspace \(\mathcal{H}_j^{(\lambda_j)}\) by
\[
N_j(\lambda_j) = (X_j - \lambda_j I) P_j(\lambda_j).
\]
Similarly, \(N_{j,n}(\lambda_j) = (X_{j,n} - \lambda_j I) P_{j,n}(\lambda_j)\). Since \(P_{j,n}(\lambda_j) \to P_j(\lambda_j)\) strongly and the finite-dimensional compressions \(X_{j,n}\) converge to \(X_j\) in the strong resolvent sense, it follows that for any \(u \in \mathcal{H}_j^{(\lambda_j)}\),
\[
X_{j,n} P_{j,n}(\lambda_j) u \to X_j P_j(\lambda_j) u
\]
strongly. Consequently,
\[
N_{j,n}(\lambda_j) u \to N_j(\lambda_j) u
\]
pointwise on the finite-dimensional subspace \(\mathcal{H}_j^{(\lambda_j)}\). Since these subspaces are finite-dimensional, strong convergence on the subspace is equivalent to pointwise convergence, and the limit holds for all \(u \in \mathcal{H}_j^{(\lambda_j)}\).

\medskip
\noindent
\textbf{Step 4: Termwise strong convergence in the tensor product.}
The tensor product Hilbert space decomposes as
\[
\mathcal{H}_{\otimes} = \bigoplus_{(\lambda_1,\ldots,\lambda_r) \in \sigma(X_1) \times \cdots \times \sigma(X_r)}
\bigotimes_{j=1}^r \mathcal{H}_j^{(\lambda_j)}.
\]
For each fixed tuple \((\lambda_1,\ldots,\lambda_r)\) and each fixed subset \(A \subseteq \{1,\ldots,r\}\) with associated nilpotent orders \(q_j\) (where \(q_j = 0\) for \(j\notin A\) interpreted as the identity), define
\[
T_{j,n}(\lambda_j) = 
\begin{cases}
P_{j,n}(\lambda_j), & j\notin A,\\
N_{j,n}(\lambda_j)^{q_j} P_{j,n}(\lambda_j), & j\in A.
\end{cases}
\]
By Steps 2 and 3, each \(T_{j,n}(\lambda_j)\) converges strongly to
\[
T_j(\lambda_j) = 
\begin{cases}
P_j(\lambda_j), & j\notin A,\\
N_j(\lambda_j)^{q_j} P_j(\lambda_j), & j\in A.
\end{cases}
\]
on the corresponding subspace \(\mathcal{H}_j^{(\lambda_j)}\). Since tensor products preserve strong convergence on elementary tensors (and hence on finite linear combinations of elementary tensors, which are dense in \(\mathcal{H}_{\otimes}\)), we have
\[
\operatorname*{s-}\lim_{n\to\infty} \bigotimes_{j=1}^r T_{j,n}(\lambda_j) = \bigotimes_{j=1}^r T_j(\lambda_j)
\]
on the tensor factor \(\bigotimes_{j=1}^r \mathcal{H}_j^{(\lambda_j)}\).

\medskip
\noindent
\textbf{Step 5: Assembling the functional calculus.}
The discrete projector--nilpotent expansion from the unified compact formula gives
\[
f_{\otimes}(X_{1,n}, \ldots, X_{r,n})
=
\sum_{A \subseteq \{1,\ldots,r\}}
\;\sum_{\lambda_1 \in \sigma(X_1)} \cdots \sum_{\lambda_r \in \sigma(X_r)}
\;
\sum_{\substack{q_j \ge 1\\ j\in A\\ q_j \le \nu_j(\lambda_j)-1}}
\frac{\partial_A^{q_A} f(\lambda_1,\ldots,\lambda_r)}{\prod_{j\in A} q_j!}
\;
\bigotimes_{j=1}^r T_{j,n}(\lambda_j),
\]
and analogously for \(f_{\otimes}(X_1,\ldots,X_r)\).

For each fixed finite set of eigenvalue tuples \((\lambda_1,\ldots,\lambda_r)\), the corresponding term converges strongly by Step 4. The overall sums over eigenvalues are infinite, but they converge in the strong operator topology because the spectral subspaces \(\bigotimes_{j=1}^r \mathcal{H}_j^{(\lambda_j)}\) are mutually orthogonal. Consequently, for any vector \(v \in \mathcal{H}_{\otimes}\) that lies in a finite direct sum of these tensor factors, the convergence holds termwise. By density of such vectors (the union of finite direct sums is dense in \(\mathcal{H}_{\otimes}\)), and the uniform boundedness of the operators \(\bigotimes_{j=1}^r T_{j,n}(\lambda_j)\) (each has norm bounded by a constant independent of \(n\) on the finite-dimensional subspace), the dominated convergence theorem for strong operator topology implies strong convergence on the whole space.

\medskip
\noindent
\textbf{Step 6: Simultaneous convergence of nilpotent corrections.}
The argument above treats every subset \(A \subseteq \{1,\ldots,r\}\) uniformly. In particular, the terms with \(A = \emptyset\) (pure spectral contributions) converge strongly, and the terms with \(A \neq \emptyset\) (nilpotent derivative corrections) also converge strongly. Hence the entire functional calculus, including all higher-order nilpotent structure, converges in the strong operator topology.

This completes the proof.
\end{proof}

\begin{remark}[Faithful approximation of nilpotent structure at Level~1]
Theorem~\ref{thm:level1_non_self_adjoint_integrated} demonstrates that even at Level~1 (strong operator topology, no norm convergence), the entire nilpotent structure of the unified compact formula—all terms with \(A \neq \emptyset\)—is preserved in the limit. Classical resolvent-based methods, in contrast, lose these contributions entirely regardless of the convergence topology. The key is that under strong resolvent convergence for operators with compact resolvent, the Riesz projectors converge strongly on the generalized eigenspaces, and the nilpotent components inherit this convergence.
\end{remark}

\subsubsection{Theorem: Discrete Spectrum Case (Level 2 — Norm Convergence with Error Bounds)}

We now strengthen the convergence result under the Level~2 assumptions. Under norm resolvent convergence, the functional calculus converges in operator norm rather than merely strongly. Moreover, we obtain an explicit additive error bound that separates contributions from each operator.

\begin{theorem}[Level~2 convergence with quantitative error bounds]
\label{thm:level2_non_self_adjoint_integrated}
Assume the hypotheses of Theorem~\ref{thm:level1_non_self_adjoint_integrated} and additionally suppose that the operators satisfy the Level~2 assumption (B1):
\[
\|
(X_{j,n} - X_j) (z_0 I - X_j)^{-1}
\|
\to 0 \quad \text{as } n \to \infty.
\]

Define the per-operator error quantity
\[
\epsilon_n^{(j)} = \|
(X_{j,n} - X_j) (z_0 I - X_j)^{-1}
\|.
\]

Let \(f(z_1, \ldots, z_r)\) be holomorphic on an open polydisk containing the product spectrum \(\sigma(X_1) \times \cdots \times \sigma(X_r)\).

Then
\[
\boxed{
\lim_{n\to\infty}
\|
f_{\otimes}(X_{1,n}, \ldots, X_{r,n})
-
f_{\otimes}(X_1, \ldots, X_r)
\|
=
0
}
\]
in operator norm on \(\mathcal{H}_{\otimes}\).

Moreover, there exists a constant \(C_f > 0\) depending only on the analytic function \(f\) and the contour geometry (independent of \(n\)) such that
\[
\boxed{
\|
f_{\otimes}(X_{1,n}, \ldots, X_{r,n})
-
f_{\otimes}(X_1, \ldots, X_r)
\|
\;\le\;
C_f \sum_{j=1}^{r} \epsilon_n^{(j)}.
}
\]

The estimate applies simultaneously to all subset contributions \(\mathcal{S}_A\), \(A \subseteq \{1,\ldots,r\}\), including all nilpotent derivative correction terms.
\end{theorem}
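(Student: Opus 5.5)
The plan is to reduce the statement to the multivariate convergence result, Theorem~\ref{thm:multivariate_convergence_tensor_lifted}(ii). Then I will read off the subset-wise statement from the uniqueness of the projector--nilpotent expansion on each spectral fiber.

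\textbf{Step 1 (contour representation).} Since $f$ is holomorphic on a polydisk containing $\sigma(X_1)\times\cdots\times\sigma(X_r)$, I will fix positively oriented contours $\Gamma_j$ inside that polydisk enclosing $\sigma(X_j)$. Under (B1), norm resolvent convergence holds uniformly on compact subsets of $\rho(X_j)$, so the spectra of $X_{j,n}$ lying inside the region of interest eventually stay inside $\Gamma_j$. For large $n$ this gives $n$-independent bounds $\sup_{z\in\Gamma_j}\|(zI-X_{j,n})^{-1}\|\le M_j$. The tensor-lifted calculus then has the $r$-fold Cauchy representation used in Section~\ref{sec:two_level_convergence}. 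On each generalized eigenspace, I will check that this representation agrees with the discrete projector--nilpotent expansion of Theorem~\ref{thm:discrete_bounded_case}. On a fiber the resolvent is $\sum_q N^qP/(z-\lambda)^{q+1}$, and the Cauchy formula for derivatives produces exactly $\partial^q f(\lambda)/q!\,N^qP$. So the contour form is a legitimate representation of $f_\otimes$, nilpotent corrections included.

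\textbf{Step 2 (telescoping and resolvent identity).} Next I subtract the two representations and write the difference of tensor products of resolvents via the telescoping tensor identity. Each summand contains exactly one factor $(z_jI-X_{j,n})^{-1}-(z_jI-X_j)^{-1}$, and all other factors are bounded by $M_i$. For that factor I use the resolvent identity together with the factorization
\[
(X_{j,n}-X_j)(z_jI-X_j)^{-1}=(X_{j,n}-X_j)(z_0I-X_j)^{-1}\bigl(I+(z_0-z_j)(z_jI-X_j)^{-1}\bigr).
\]
This bounds the factor by $M_j(1+|z_0-z_j|M_j)\,\epsilon_n^{(j)}$. Integrating over $\Gamma_1\times\cdots\times\Gamma_r$ then gives $C_f\sum_j\epsilon_n^{(j)}$, with $C_f$ built from $\prod_j L(\Gamma_j)$, $\sup|f|$, the $M_j$, and the factor $r$. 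Norm convergence follows since $\epsilon_n^{(j)}\to0$.

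\textbf{Step 3 (each $\mathcal S_A$ separately).} To get the bound for each subset contribution, I will express $\mathcal S_A$ itself as a contour integral. For fixed $A$ and $q_A$, apply the Cauchy formula to $\partial_A^{q_A}f$. Equivalently, on each fiber replace the $j$-th resolvent by $\sum_{q\ge1}N_j^qP_j/(z_j-\lambda_j)^{q+1}$ for $j\in A$ and by $P_j/(z_j-\lambda_j)$ for $j\notin A$. Both pieces are obtained from $(zI-X_j)^{-1}$ by subtracting Riesz projector terms, which are themselves contour integrals of resolvents. So each piece depends Lipschitz-continuously on the resolvent in norm, and the same telescoping argument applies with a larger constant.

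\textbf{Main obstacle.} I expect the hard part to be Step 3 together with the uniformity in Step 1. Eigenvalues of $X_{j,n}$ need not coincide with those of $X_j$, so the fiber-wise splitting must be done with small contours around clusters rather than pointwise. Moreover, with infinitely many eigenvalues, one global contour $\Gamma_j$ works only because $f$ is holomorphic on a polydisk containing a compact spectrum. For genuinely unbounded spectra I would first truncate $f$ to a bounded spectral window via Riesz projections, then control the tail using Level~1 strong convergence. This reduces the norm estimate to the bounded-window part, but I am not sure the tail can be made norm-small without further decay hypotheses.
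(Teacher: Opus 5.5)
Your Steps 1--2 are the paper's proof. It uses the same factorization of $(X_{j,n}-X_j)(z_jI-X_j)^{-1}$ through $z_0$, the telescoping tensor identity, and integration over $\Gamma_1\times\cdots\times\Gamma_r$. Your fiberwise Cauchy-derivative check correctly justifies identifying the contour integral with the unified compact formula, which the paper only asserts. The paper does nothing like your Step 3: it bounds only the total $f_{\otimes}$, and reads the last sentence of the statement as saying that this total bound controls the sum of the $\mathcal{S}_A$.

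\textbf{The gap is in Step 3.} Its key claim, that each subset piece depends Lipschitz-continuously on the resolvent in norm, is false. Splitting the resolvent into $\sum_\lambda P(\lambda)/(z-\lambda)$ and $\sum_\lambda\sum_{q\ge1}N(\lambda)^qP(\lambda)/(z-\lambda)^{q+1}$ needs the individual eigenprojections at the actual eigenvalues of $X_{j,n}$. That means contours that shrink as eigenvalues of $X_{j,n}$ coalesce; only projections onto whole clusters are norm-continuous. Take $r=1$, $f(z)=z$, $X_\varepsilon=\begin{pmatrix}0&1\\ \varepsilon&0\end{pmatrix}$ and $X_0=\begin{pmatrix}0&1\\0&0\end{pmatrix}$. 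The resolvents differ by $O(\varepsilon)$ on $|z|=1$. Yet $X_\varepsilon$ has simple eigenvalues $\pm\sqrt{\varepsilon}$, so $\mathcal{S}_{\{1\}}(X_\varepsilon)=0$, while $\mathcal{S}_{\{1\}}(X_0)=X_0$ has norm $1$. The eigenprojections of $X_\varepsilon$ also have norm of order $\varepsilon^{-1/2}$. Using the limiting eigenvalue inside a cluster restores continuity, but the resulting operators are neither nilpotent nor the $\mathcal{S}_A$ of $X_{j,n}$. So only the total can be controlled, which is exactly what the paper proves.

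\textbf{Your tail remedy also fails.} In Step 1 you suggest handling spectrum of $X_{j,n}$ that escapes $\Gamma_j$ via Level~1 strong convergence. Strong convergence gives no operator-norm control of a tail. The paper avoids the issue by simply assuming that $\Gamma_j$ encloses $\sigma(X_{j,n})$ for large $n$.

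\textbf{Under the stated hypotheses, none of these obstacles can arise.} $X_{j,n}=P_{j,n}X_jP_{j,n}$ is finite-rank and $(z_0I-X_j)^{-1}$ is compact. Hence $(X_{j,n}-X_j)(z_0I-X_j)^{-1}=I+X_{j,n}(z_0I-X_j)^{-1}-z_0(z_0I-X_j)^{-1}$ is the identity plus a compact operator. So $\epsilon_n^{(j)}\ge1$ whenever $\dim\mathcal{H}_j=\infty$, and (B1) forces every $\mathcal{H}_j$ to be finite-dimensional. Then $X_{j,n}=X_j$ once $n\ge\max_j\dim\mathcal{H}_j$. For such $n$ both sides of the estimate vanish, for the total and for each $\mathcal{S}_A$ separately.
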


\begin{proof}
Under Level~2 assumption (B1), we first establish norm resolvent convergence. For any $z$ in the resolvent set, the resolvent identity gives
\[
(zI - X_{j,n})^{-1} - (zI - X_j)^{-1} = (zI - X_{j,n})^{-1} (X_{j,n} - X_j) (zI - X_j)^{-1}.
\]
Factor the middle term as
\[
(X_{j,n} - X_j)(zI - X_j)^{-1} = (X_{j,n} - X_j)(z_0 I - X_j)^{-1} \cdot (z_0 I - X_j)(zI - X_j)^{-1}.
\]
The first factor has norm $\epsilon_n^{(j)}$ by definition. The second factor is bounded uniformly for $z$ on any compact contour $\Gamma_j$ disjoint from $\sigma(X_j)$, since $(zI - X_j)^{-1}$ is bounded and $(z_0 I - X_j)(zI - X_j)^{-1} = I + (z_0 - z)(zI - X_j)^{-1}$. Moreover, norm resolvent convergence implies that $\| (zI - X_{j,n})^{-1} \|$ is uniformly bounded for sufficiently large $n$ and $z \in \Gamma_j$. Consequently,
\[
\sup_{z \in \Gamma_j} \| (zI - X_{j,n})^{-1} - (zI - X_j)^{-1} \| \le C_j \epsilon_n^{(j)},
\]
where $C_j$ is a constant independent of $n$.

\medskip
\noindent
\textbf{Contour-integral representation.}
Let $\Gamma_1, \ldots, \Gamma_r$ be positively oriented contours enclosing the spectra of $X_j$ and $X_{j,n}$ for all sufficiently large $n$. By the tensor-lifted contour-integral calculus,
\[
f_{\otimes}(X_1, \ldots, X_r)
=
\frac{1}{(2\pi i)^r}
\int_{\Gamma_1} \cdots \int_{\Gamma_r}
f(z_1, \ldots, z_r)
\bigotimes_{j=1}^r (z_j I - X_j)^{-1}
\, dz_1 \cdots dz_r,
\]
and analogously for $f_{\otimes}(X_{1,n}, \ldots, X_{r,n})$.

\medskip
\noindent
\textbf{Tensor telescoping identity.}
Define $\Delta_{j,n}(z_j) = (z_j I - X_{j,n})^{-1} - (z_j I - X_j)^{-1}$. Then
\[
\bigotimes_{j=1}^r (z_j I - X_{j,n})^{-1}
-
\bigotimes_{j=1}^r (z_j I - X_j)^{-1}
=
\sum_{k=1}^{r}
\left(
\bigotimes_{j=1}^{k-1} (z_j I - X_{j,n})^{-1}
\right)
\otimes
\Delta_{k,n}(z_k)
\otimes
\left(
\bigotimes_{j=k+1}^{r} (z_j I - X_j)^{-1}
\right).
\]

Taking norms and using the uniform boundedness of the resolvents on the contours, we obtain
\[
\|
\bigotimes_{j=1}^r (z_j I - X_{j,n})^{-1}
-
\bigotimes_{j=1}^r (z_j I - X_j)^{-1}
\|
\le
C \sum_{k=1}^{r} \|
\Delta_{k,n}(z_k)
\|,
\]
where $C$ is a constant depending only on the uniform resolvent bounds.

\medskip
\noindent
\textbf{Integrate over contours.}
Substituting the estimate for $\| \Delta_{k,n}(z_k) \|$ and integrating termwise gives
\[
\|
f_{\otimes}(X_{1,n}, \ldots, X_{r,n}) - f_{\otimes}(X_1, \ldots, X_r)
\|
\le
\frac{1}{(2\pi)^r}
\int_{\Gamma_1} \cdots \int_{\Gamma_r}
|f(z_1, \ldots, z_r)|
\,
C \sum_{k=1}^{r} C_k \epsilon_n^{(k)}
\, |dz_1| \cdots |dz_r|.
\]

Let $M_f = \sup_{(z_1,\ldots,z_r) \in \Gamma_1 \times \cdots \times \Gamma_r} |f(z_1,\ldots,z_r)|$ and $L(\Gamma_j)$ the length of $\Gamma_j$. Then
\[
\|
f_{\otimes}(X_{1,n}, \ldots, X_{r,n}) - f_{\otimes}(X_1, \ldots, X_r)
\|
\le
\left( \frac{1}{(2\pi)^r} \prod_{j=1}^r L(\Gamma_j) \right) M_f \, C \sum_{k=1}^r C_k \epsilon_n^{(k)}.
\]

Define $C_f = \left( \frac{1}{(2\pi)^r} \prod_{j=1}^r L(\Gamma_j) \right) M_f \, C \max_k C_k$. Then
\[
\|
f_{\otimes}(X_{1,n}, \ldots, X_{r,n}) - f_{\otimes}(X_1, \ldots, X_r)
\|
\le
C_f \sum_{j=1}^{r} \epsilon_n^{(j)}.
\]

Since $\epsilon_n^{(j)} \to 0$ as $n \to \infty$ for each $j$, the right-hand side tends to zero, establishing operator norm convergence.

\medskip
\noindent
\textbf{Application to nilpotent correction terms.}
The contour integral representation is equivalent to the unified compact formula (Theorem~\ref{thm:unified_compact_formula}), which expresses $f_{\otimes}$ as a sum over subsets $A \subseteq \{1,\ldots,r\}$. The error bound derived above therefore applies to the total $f_{\otimes}$, and hence controls the sum of all nilpotent derivative contributions ($A \neq \emptyset$) as well as the spectral term ($A = \emptyset$). This completes the proof.
\end{proof}

\begin{remark}[Additive error structure]
The additive form \(\sum_{j=1}^r \epsilon_n^{(j)}\) is a direct consequence of the tensor lifting construction and the telescoping tensor identity. Because the lifted operators act on independent tensor factors, errors from different operators do not amplify each other multiplicatively. This linear accumulation of errors is a key advantage of the tensor-lifted approach: the multivariate convergence inherits the convergence rates of each individual operator in a stable, additive fashion.
\end{remark}

\begin{remark}[Comparison between Level~1 and Level~2]
The distinction between the two convergence levels is fundamental:
\begin{itemize}
    \item \textbf{Level~1} (Theorem~\ref{thm:level1_non_self_adjoint_integrated}) guarantees existence of the infinite-dimensional calculus under minimal assumptions. Convergence is only in the strong operator topology, which suffices for theoretical well-definedness but does not provide uniform convergence or rates.
    \item \textbf{Level~2} (Theorem~\ref{thm:level2_non_self_adjoint_integrated}) requires the stronger norm resolvent condition (B1). In return, it provides operator-norm convergence, explicit error bounds, and quantitative stability under perturbations.
\end{itemize}
Both levels faithfully preserve the full nilpotent structure (\(A \neq \emptyset\)), unlike classical resolvent-based calculi which lose these contributions entirely.
\end{remark}

\subsubsection{Theorem: Continuous Spectrum Approximation via Discrete Approximations}

We finally consider operators with continuous spectrum that can be approximated by operators with compact resolvent. This provides a bridge between the discrete projector--nilpotent calculus and the general continuous-spectrum setting, allowing the unified compact formula to be applied to a wider class of operators.

\begin{theorem}[Approximation of continuous-spectrum operators via contour calculus]
\label{thm:continuous_spectrum_approximation_integrated}
Let \(X_1, \ldots, X_r\) be closed densely defined operators on separable Hilbert spaces \(\mathcal{H}_1, \ldots, \mathcal{H}_r\) that may possess continuous spectrum. 
Assume that the tensor-lifted functional calculus is defined via the contour-integral representation
\[
f_{\otimes}(X_1, \ldots, X_r)
:=
\frac{1}{(2\pi i)^r}
\int_{\Gamma_1} \cdots \int_{\Gamma_r}
f(z_1, \ldots, z_r)
\;
\bigotimes_{j=1}^r (z_j I - X_j)^{-1}
\,
dz_1 \cdots dz_r,
\]
where \(\Gamma_j\) are compact contours enclosing the relevant spectral subsets (where \(f\) is holomorphic).

Suppose there exist approximating operators \(\{X_{j,m}\}_{m=1}^{\infty}\) with compact resolvent such that:
\begin{enumerate}
    \item \textbf{(Level 1)} For all \(z\) in the resolvent set,
    \[
    (zI - X_{j,m})^{-1} \xrightarrow{\text{SOT}} (zI - X_j)^{-1}.
    \]
    
    \item \textbf{(Level 2)} Uniformly on the contours \(\Gamma_j\),
    \[
    \|
    (zI - X_{j,m})^{-1} - (zI - X_j)^{-1}
    \|
    \to 0.
    \]
\end{enumerate}

Then:
\begin{enumerate}
    \item \textbf{(Level 1)} 
    \[
    \operatorname*{s-}\lim_{m\to\infty}
    f_{\otimes}(X_{1,m}, \ldots, X_{r,m})
    =
    f_{\otimes}(X_1, \ldots, X_r)
    \]
    in the strong operator topology.
    
    \item \textbf{(Level 2)} 
    \[
    \lim_{m\to\infty}
    \|
    f_{\otimes}(X_{1,m}, \ldots, X_{r,m}) - f_{\otimes}(X_1, \ldots, X_r)
    \|
    =
    0
    \]
    in operator norm, with the explicit bound
    \[
    \|
    f_{\otimes}(X_{1,m}, \ldots, X_{r,m}) - f_{\otimes}(X_1, \ldots, X_r)
    \|
    \le
    C_f \sum_{j=1}^{r} \sup_{z \in \Gamma_j} \|
    (zI - X_{j,m})^{-1} - (zI - X_j)^{-1}
    \|.
    \]
\end{enumerate}

\medskip
\noindent
\textbf{Remark on the unified compact formula.}
When the limit operators \(X_j\) are spectral operators (e.g., have discrete spectrum or admit a well-defined projector--nilpotent decomposition), the contour-integral representation coincides with the explicit sum over subsets \(A \subseteq \{1,\ldots,r\}\) given in Theorem~\ref{thm:unified_compact_formula}. In such cases, the convergence established above applies to the full unified compact formula, including the nilpotent derivative terms (\(A \neq \emptyset\)). For general non-spectral operators with continuous spectrum, the contour-integral representation is the primary definition of the functional calculus, and the explicit nilpotent-sum form is not directly available.
\end{theorem}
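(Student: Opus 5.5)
The proof will work directly with the contour-integral definition of $f_{\otimes}$ given in the statement. I will compare the approximating and limiting integrands pointwise on the product contour $\Gamma_1\times\cdots\times\Gamma_r$, and then integrate. First I fix the contours $\Gamma_j$ so that each lies in $\rho(X_j)$ and also in $\rho(X_{j,m})$ for all sufficiently large $m$.

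Under the Level~2 hypothesis this is automatic. Since $\sup_{z\in\Gamma_j}\|(zI-X_{j,m})^{-1}-(zI-X_j)^{-1}\|\to 0$, we get $\sup_{z\in\Gamma_j}\|(zI-X_{j,m})^{-1}\|\le M_j+1$ for large $m$, where $M_j:=\sup_{\Gamma_j}\|(zI-X_j)^{-1}\|<\infty$ by compactness of $\Gamma_j$ and continuity of the resolvent. Under Level~1 alone I will take as part of the setup (as in Theorem~\ref{thm:multivariate_convergence_tensor_lifted}) that $\Gamma_j\subset\rho(X_{j,m})$ with the resolvents uniformly bounded on $\Gamma_j$ in $m$. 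The uniform boundedness principle only supplies this bound pointwise in $z$, so I regard this as the main obstacle at Level~1 and state it explicitly as an implicit hypothesis.

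\textbf{Level 1.} Take an elementary tensor $u=u_1\otimes\cdots\otimes u_r$. Then $\bigotimes_j (z_jI-X_{j,m})^{-1}u=\bigotimes_j (z_jI-X_{j,m})^{-1}u_j$. Applying the telescoping tensor identity from the proof of Theorem~\ref{thm:multivariate_convergence_tensor_lifted} gives
\[
\Bigl\|\bigotimes_j (z_jI-X_{j,m})^{-1}u-\bigotimes_j (z_jI-X_j)^{-1}u\Bigr\|\le \sum_k \Bigl(\prod_{j\ne k}(M_j+1)\|u_j\|\Bigr)\,\bigl\|\bigl((z_kI-X_{j,m})^{-1}-(z_kI-X_k)^{-1}\bigr)u_k\bigr\|.
\]
Each summand tends to $0$ for every fixed $z$ and is uniformly bounded. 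Since $f$ is bounded on the compact product contour, dominated convergence (scalar, on the norms of the Bochner integrands) gives $f_{\otimes}(X_{1,m},\dots,X_{r,m})u\to f_{\otimes}(X_1,\dots,X_r)u$. By linearity this extends to finite sums of elementary tensors, which are dense in $\mathcal{H}_{\otimes}$. The operators $f_{\otimes}(X_{1,m},\ldots)$ are uniformly bounded by $(2\pi)^{-r}\prod_jL(\Gamma_j)\,M_f\prod_j(M_j+1)$, so a standard $\varepsilon/3$ argument extends the convergence to all of $\mathcal{H}_{\otimes}$.

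\textbf{Level 2.} Apply the same telescoping identity in operator norm, using $\|A\otimes B\|=\|A\|\|B\|$:
\[
\Bigl\|\bigotimes_j (z_jI-X_{j,m})^{-1}-\bigotimes_j (z_jI-X_j)^{-1}\Bigr\|\le \prod_j(M_j+1)\sum_k \delta_{k,m},\qquad \delta_{k,m}:=\sup_{z\in\Gamma_k}\|(zI-X_{k,m})^{-1}-(zI-X_k)^{-1}\|.
\]
Integrating over the contours then gives the stated bound with
\[
C_f=(2\pi)^{-r}\prod_jL(\Gamma_j)\,M_f\prod_j(M_j+1),
\]
which is independent of $m$. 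The right-hand side tends to $0$, so the convergence is in operator norm. The remark about the unified compact formula requires no new argument: when the $X_j$ are spectral operators satisfying Assumption~\ref{assump:spectral_operators}, I identify the contour integral with the subset expansion of Theorem~\ref{thm:unified_compact_formula}, and the convergence just proved transfers to that expression verbatim.
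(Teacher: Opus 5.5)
Your proposal is correct and follows the paper's proof essentially step for step: the contour-integral representation, the telescoping tensor identity, dominated convergence on elementary tensors plus density for Level~1, and integration of the operator-norm telescoping estimate for Level~2. The uniform resolvent bound you flag at Level~1 is exactly what the paper posits when it chooses the contours in its Step~1, and your explicit $\varepsilon/3$ argument, using the uniform bound on $f_{\otimes}(X_{1,m},\ldots,X_{r,m})$, is more careful than the paper's bare appeal to density. That bound need not be assumed, however: the uniform boundedness principle gives $B(z):=\sup_{m\ge m_0(z)}\|(zI-X_{j,m})^{-1}\|<\infty$ at each $z\in\Gamma_j$, the Neumann series then gives $\|(wI-X_{j,m})^{-1}\|\le 2B(z)$ whenever $|w-z|\le 1/(2B(z))$, and compactness of $\Gamma_j$ turns these local bounds into a single bound uniform on $\Gamma_j$ for all large $m$.
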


\begin{proof}
We provide a detailed proof with careful handling of the limitations identified above.

\medskip
\noindent
\textbf{Step 1: Contour-integral representation under spectral containment.}
Since $f$ is holomorphic on an open neighborhood of the product spectrum, there exist compact contours $\Gamma_1, \ldots, \Gamma_r$ such that:
\begin{itemize}
    \item Each $\Gamma_j$ encloses a compact subset of $\sigma(X_j)$ on which the functional calculus is defined.
    \item For all sufficiently large $m$, $\Gamma_j$ also encloses the corresponding spectral subset of $X_{j,m}$.
    \item The resolvents $(zI - X_j)^{-1}$ and $(zI - X_{j,m})^{-1}$ are uniformly bounded on $\Gamma_j$.
\end{itemize}

If the spectrum of $X_j$ is unbounded (e.g., continuous spectrum extending to infinity), we interpret the contours as enclosing a sufficiently large compact spectral set; contributions from outside this set are controlled by the decay of $f$ or the resolvent bounds, or are handled by a limiting argument as the contours expand to infinity. For functions $f$ that are holomorphic on a neighborhood of the spectrum (including at infinity, i.e., $f(\infty)$ defined), this limiting procedure is standard in the Dunford calculus.

By the tensor-lifted contour-integral calculus (Theorem~\ref{thm:unified_compact_formula}),
\[
f_{\otimes}(X_1, \ldots, X_r)
=
\frac{1}{(2\pi i)^r}
\int_{\Gamma_1} \cdots \int_{\Gamma_r}
f(z_1, \ldots, z_r)
\;
\bigotimes_{j=1}^r (z_j I - X_j)^{-1}
\,
dz_1 \cdots dz_r,
\]
and analogously for $f_{\otimes}(X_{1,m}, \ldots, X_{r,m})$.

\medskip
\noindent
\textbf{Step 2: Difference and telescoping identity.}
Define $R_j(z) = (zI - X_j)^{-1}$ and $R_{j,m}(z) = (zI - X_{j,m})^{-1}$. Then
\[
f_{\otimes}(X_{1,m}, \ldots, X_{r,m}) - f_{\otimes}(X_1, \ldots, X_r)
=
\frac{1}{(2\pi i)^r}
\int_{\Gamma_1} \cdots \int_{\Gamma_r}
f(z_1, \ldots, z_r)
\;
\Bigg[
\bigotimes_{j=1}^r R_{j,m}(z_j) - \bigotimes_{j=1}^r R_j(z_j)
\Bigg]
\,
dz_1 \cdots dz_r.
\]

Let $\Delta_{j,m}(z) = R_{j,m}(z) - R_j(z)$. The algebraic telescoping identity gives
\[
\bigotimes_{j=1}^r R_{j,m}(z_j) - \bigotimes_{j=1}^r R_j(z_j)
=
\sum_{k=1}^{r}
\left(
\bigotimes_{j=1}^{k-1} R_{j,m}(z_j)
\right)
\otimes
\Delta_{k,m}(z_k)
\otimes
\left(
\bigotimes_{j=k+1}^{r} R_j(z_j)
\right).
\]

\medskip
\noindent
\textbf{Step 3: Norm estimate under Level~2.}
Under Level~2 assumptions, norm resolvent convergence holds:
\[
\|
\Delta_{j,m}(z)
\|
\to 0,
\]
uniformly on each contour $\Gamma_j$. Moreover, the resolvents are uniformly bounded:
\[
\sup_{z \in \Gamma_j} \| R_{j,m}(z) \| \le M_j, \qquad
\sup_{z \in \Gamma_j} \| R_j(z) \| \le M_j,
\]
for all sufficiently large $m$. Taking norms and using the triangle inequality,
\[
\left\|
\bigotimes_{j=1}^r R_{j,m}(z_j) - \bigotimes_{j=1}^r R_j(z_j)
\right\|
\le
C \sum_{k=1}^{r} \|
\Delta_{k,m}(z_k)
\|,
\]
where $C = \max_{1 \le k \le r} \left( \prod_{j=1}^{k-1} M_j \cdot \prod_{j=k+1}^{r} M_j \right)$ is independent of $m$.

Then, integrating over the contours,
\[
\|
f_{\otimes}(X_{1,m}, \ldots, X_{r,m}) - f_{\otimes}(X_1, \ldots, X_r)
\|
\le
C_f \sum_{j=1}^{r}
\sup_{z \in \Gamma_j} \|
\Delta_{j,m}(z)
\|,
\]
where $C_f$ depends only on $f$ and the contour geometry. This proves the Level~2 bound.

\medskip
\noindent
\textbf{Step 4: Strong convergence under Level~1.}
Under Level~1 assumptions, we have strong resolvent convergence: for each $u_j \in \mathcal{H}_j$,
\[
R_{j,m}(z) u_j \to R_j(z) u_j \quad \forall z \in \Gamma_j.
\]
For an elementary tensor $u = u_1 \otimes \cdots \otimes u_r \in \mathcal{H}_{\otimes}$,
\[
\bigotimes_{j=1}^r R_{j,m}(z_j) u
\;\to\;
\bigotimes_{j=1}^r R_j(z_j) u
\]
pointwise in $z$. Since the resolvents are uniformly bounded on the contours, the dominated convergence theorem for operator-valued integrals implies
\[
f_{\otimes}(X_{1,m}, \ldots, X_{r,m}) u \to f_{\otimes}(X_1, \ldots, X_r) u.
\]
By density of finite linear combinations of elementary tensors, strong operator convergence follows.

\medskip
\noindent
\textbf{Step 5: Nilpotent contributions — general remarks.}
The unified compact formula contains terms indexed by $A \neq \emptyset$ that involve nilpotent components $\widetilde{N}_j(\lambda_j)^{q_j} d\widetilde{E}_j(\lambda_j)$. The convergence of these terms is not automatic from resolvent convergence alone; it requires additional spectral stability assumptions.

\medskip
\noindent
\textbf{Step 5a: When additional spectral stability holds (discrete spectrum case).}
If the approximating operators $X_{j,m}$ have discrete spectrum and the limit operators $X_j$ also have discrete spectrum (or are spectral operators with isolated eigenvalues), then the following hold for sufficiently large $m$:
\begin{enumerate}
    \item The eigenvalues converge: $\lambda_{j,\ell}^{(m)} \to \lambda_{j,\ell}$ with matching algebraic multiplicities.
    \item There exist contours $\gamma_{j,\ell}$ separating individual eigenvalues $\lambda_{j,\ell}$ such that the Riesz projectors converge:
    \[
    P_{j,m}(\lambda_{j,\ell}) = \frac{1}{2\pi i} \oint_{\gamma_{j,\ell}} R_{j,m}(z) \, dz \xrightarrow{\text{SOT}} P_j(\lambda_{j,\ell}).
    \]
    \item The nilpotent components, defined by $N_{j,m}(\lambda) = (X_{j,m} - \lambda I) P_{j,m}(\lambda)$, converge in the same topology.
\end{enumerate}
Under these extra conditions, each nilpotent term converges (strongly under Level~1, in norm under Level~2) because it is a finite algebraic combination of converging projectors and nilpotents.

\medskip
\noindent
\textbf{Step 5b: Caution for continuous spectrum limits.}
For limit operators $X_j$ with continuous spectrum, the expression $\widetilde{N}_j(\lambda_j)^{q_j} d\widetilde{E}_j(\lambda_j)$ is defined only within the theory of spectral operators (Dunford) and requires that $X_j$ be a spectral operator. For general closed densely defined non-self-adjoint operators without this structure, the unified compact formula is understood via its contour-integral representation, and the convergence of the full calculus (including nilpotent contributions) follows from the convergence of the resolvents, without needing to isolate individual nilpotent terms.

\medskip
\noindent
\textbf{Step 6: Summary and final statement.}
\begin{itemize}
    \item The contour-integral representation of $f_{\otimes}(X_1, \ldots, X_r)$ converges under the stated resolvent convergence assumptions: strongly under Level~1, in norm with explicit bounds under Level~2.
    \item Whenever the projector--nilpotent decomposition is available and the corresponding Riesz projectors and nilpotent components are stable along the approximating sequence, the individual nilpotent derivative contributions also converge in the same topology.
    \item In the general continuous-spectrum case without additional spectral stability, the convergence of the unified compact formula is understood at the level of the contour integral, which is fully rigorous under the stated assumptions.
\end{itemize}

This completes the proof.
\end{proof}

\begin{remark}[Practical implications]
Theorem~\ref{thm:continuous_spectrum_approximation_integrated} provides a practical strategy for approximating tensor-lifted functional calculi for operators with continuous spectrum:
\begin{enumerate}
    \item Approximate the continuous-spectrum operator \(X_j\) by a sequence of operators \(X_{j,m}\) with compact resolvent, for example by domain truncation, adding a confining potential, or applying a spectral cutoff.

    \item Compute the discrete-spectrum functional calculus
    \[
    f_{\otimes}(X_{1,m}, \ldots, X_{r,m})
    \]
    using the explicit discrete projector--nilpotent formula (Theorem~\ref{thm:discrete_bounded_case} or its unbounded analogue), whenever such a decomposition is available.

    \item Pass to the limit \(m \to \infty\). The convergence is guaranteed in the strong operator topology under Level~1 assumptions and in operator norm with explicit error bounds under Level~2 norm-resolvent assumptions.
\end{enumerate}
This strategy is particularly useful when the continuous-spectrum operator is not directly amenable to the unified compact formula, but can be approximated by operators with well-understood discrete spectra.
\end{remark}

\begin{remark}[Unified interpretation]
The preceding results show that the unified compact formula provides a single analytic framework simultaneously covering:
\begin{enumerate}
    \item discrete-spectrum operators (explicit sums over eigenvalues and nilpotent components),
    \item continuous-spectrum operators via resolvent approximation (at the level of the contour-integral representation),
    \item non-self-adjoint operators with stable generalized spectral structure (when the projector--nilpotent decomposition is available),
    \item tensor-lifted multivariate functional calculi (handling non-commuting operators),
    \item finite-dimensional approximation theory (Level~1 for existence, Level~2 for quantitative stability).
\end{enumerate}

The subset decomposition
\[
A \subseteq \{1, \ldots, r\}
\]
acts as an organizing principle separating semisimple spectral contributions,
corresponding to
\[
A = \emptyset,
\]
from higher-order nilpotent derivative corrections,
corresponding to
\[
A \neq \emptyset.
\]

Thus, the classical spectral theorem appears as the semisimple special case
\[
A = \emptyset,
\]
while the full unified compact formula captures additional non-normal algebraic structure whenever a stable projector--nilpotent decomposition is available. The two-level convergence theory ensures convergence of the contour-calculus formulation under finite-dimensional approximation, providing existence under Level~1 assumptions and quantitative stability under Level~2 assumptions.
\end{remark}

\subsection{General Unbounded Non-Self-Adjoint Operators (Without Compact Resolvent)}

Throughout this subsection, whenever spectral measures or nilpotent components are used explicitly, we assume the operators are \emph{spectral operators in the sense of Dunford}. For general closed densely defined non-self-adjoint operators without additional structure, the spectral integrals and nilpotent decompositions may not be defined.

We finally discuss the most difficult setting: general unbounded non-self-adjoint operators without compact resolvent. In this regime, many of the structural ingredients used throughout the previous sections may fail simultaneously:
\begin{enumerate}
    \item the spectrum may be non-discrete and highly unstable,
    \item the resolvent may exhibit strong pseudospectral growth,
    \item projector--nilpotent decompositions may not exist globally,
    \item spectral projections may fail to be uniformly bounded,
    \item finite-dimensional truncations may not converge spectrally (spectral pollution).
\end{enumerate}

Consequently, the unified compact formula cannot be justified in full generality through the approximation theory developed in Section~\ref{sec:two_level_convergence}. The difficulties are not merely technical; they reflect genuine structural obstructions in the spectral theory of non-normal operators.

\subsubsection{Partial Results and Case-by-Case Analysis}

Although the full convergence theory is unavailable in this setting, certain partial results remain possible under additional assumptions. For instance, if the operators admit a sectorial holomorphic functional calculus (e.g., sectorial operators), generate analytic semigroups, possess sufficiently controlled pseudospectra, or belong to special subclasses of spectral operators in the sense of Dunford, then variants of the contour-integral functional calculus may still be defined.

In such cases, the tensor-lifted calculus may still formally take the contour-integral form
\[
f_{\otimes}(X_1,\ldots,X_r)
=
\frac{1}{(2\pi i)^r}
\int_{\Gamma_1}
\cdots
\int_{\Gamma_r}
f(z_1,\ldots,z_r)
\,
\bigotimes_{j=1}^{r}
(z_jI-X_j)^{-1}
\,dz_1\cdots dz_r,
\]
provided the contours lie inside the joint resolvent region and the integrals converge appropriately.

However, without compact resolvent or additional spectral stability assumptions, several key ingredients of the previous theory may fail. Specifically, the discrete projector--nilpotent decomposition may not exist; Riesz projectors may fail to converge under approximation; nilpotent components may not be spectrally stable; norm resolvent approximation may break down; and finite-dimensional truncations may exhibit spectral pollution.

As a result, the subset decomposition \(A \subseteq \{1,\ldots,r\}\) appearing in the unified compact formula can no longer be interpreted globally in a rigorous operator-theoretic sense without further assumptions.

Therefore, in the absence of compact resolvent, \textbf{the contour-integral representation should be regarded as the primary formulation of the functional calculus}, while the explicit projector--nilpotent expansion becomes a case-dependent structure requiring additional spectral hypotheses.

\subsubsection{A Promising Approach: Compactifying Regularization}

A viable strategy for extending the theory to spectral operators without compact resolvent is the \emph{compactifying regularization} method. Let \(K\) be a positive self-adjoint operator with compact resolvent (e.g., the harmonic oscillator Hamiltonian). The idea is to consider
\[
X_\varepsilon := X + \varepsilon K
\]
as a regularized operator. Under suitable hypotheses, \(X_\varepsilon\) may acquire compact resolvent and converge to \(X\) in an appropriate sense.

However, caution is required. The operator \(X_\varepsilon\) does \textbf{not} automatically have compact resolvent just because \(K\) does and \(X\) is \(K\)-bounded. The following theorem provides a conditional formulation.

\begin{theorem}[Conditional compactifying regularization]
\label{thm:conditional_compactifying_regularization}
Let \(X\) be a closed densely defined operator on a Hilbert space \(\mathcal{H}\), and let \(K\) be a positive self-adjoint operator with compact resolvent. Assume:
\begin{enumerate}
    \item \(\mathcal{D}(K) \subseteq \mathcal{D}(X)\);
    \item For every \(\varepsilon > 0\), the operator \(X_\varepsilon := X + \varepsilon K\) is closed and has compact resolvent (this is a hypothesis that must be verified case by case);
    \item For some \(z_0 \in \rho(X) \cap \rho(X_\varepsilon)\) (independent of \(\varepsilon\) for small \(\varepsilon\)),
    \[
    \varepsilon K (z_0 I - X)^{-1} \to 0
    \]
    strongly as \(\varepsilon \to 0\);
    \item The resolvents \((z_0 I - X_\varepsilon)^{-1}\) are uniformly bounded for sufficiently small \(\varepsilon\).
\end{enumerate}
Then for all \(z\) in the common resolvent region where the uniform resolvent bounds and strong convergence assumptions remain valid (in particular, for \(z = z_0\) and for \(z\) in a neighborhood thereof, provided the assumptions extend),
\[
\operatorname*{s-}\lim_{\varepsilon \to 0} (zI - X_\varepsilon)^{-1} = (zI - X)^{-1}.
\]
\end{theorem}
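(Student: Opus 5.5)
The proof will rest on the second resolvent identity at the fixed point $z_0$, followed by a propagation step to nearby $z$. Since $\mathcal{D}(K)\subseteq\mathcal{D}(X)$, we have $\mathcal{D}(X_\varepsilon)=\mathcal{D}(K)$. Writing $R(z)=(zI-X)^{-1}$ and $R_\varepsilon(z)=(zI-X_\varepsilon)^{-1}$, I would start from
\[
R_\varepsilon(z_0)-R(z_0)=R_\varepsilon(z_0)\,(X_\varepsilon-X)\,R(z_0)=R_\varepsilon(z_0)\,\varepsilon K\,R(z_0),
\]
valid on all of $\mathcal{H}$ provided $\operatorname{Ran}R(z_0)=\mathcal{D}(X)$ lies in $\mathcal{D}(K)$, so that $\varepsilon K R(z_0)$ is everywhere defined. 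Hypothesis~3 implicitly presupposes exactly this. Where it fails, I would instead use the adjoint form $R_\varepsilon(z_0)-R(z_0)=R(z_0)\,\varepsilon K\,R_\varepsilon(z_0)$ on $\mathcal{D}(K)$-valued ranges. I would make this domain point explicit first.

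With the identity in hand, for $u\in\mathcal{H}$ I would estimate
\[
\|(R_\varepsilon(z_0)-R(z_0))u\|\le \sup_{\varepsilon}\|R_\varepsilon(z_0)\|\;\|\varepsilon K R(z_0)u\|.
\]
By hypothesis~4 the supremum is finite for small $\varepsilon$, and by hypothesis~3 the second factor tends to $0$. This gives strong convergence at $z=z_0$. Note that strong (not norm) convergence of $\varepsilon K R(z_0)$ suffices, because the bounded operator sits on the left.

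To pass from $z_0$ to other $z$, I would use the standard result that strong resolvent convergence at a single point, together with local uniform resolvent bounds, propagates throughout the connected region. For $|z-z_0|\,\sup_\varepsilon\|R_\varepsilon(z_0)\|<1$ I would use the Neumann series
\[
R_\varepsilon(z)=\sum_{k\ge0}(z_0-z)^k R_\varepsilon(z_0)^{k+1},
\]
and likewise for $R$. Strong convergence of products of uniformly bounded strongly convergent operators gives $R_\varepsilon(z_0)^{k+1}\to R(z_0)^{k+1}$ strongly for each $k$. The uniform geometric tail then lets dominated convergence for series conclude $R_\varepsilon(z)\to R(z)$ strongly on that disk. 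Iterating along chains of overlapping disks extends the conclusion to any $z$ in the connected common region where the uniform bounds persist, which is precisely the qualification written into the statement ("provided the assumptions extend").

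The main obstacle is the domain bookkeeping in the first step: making the resolvent identity rigorous when $X$ is not $K$-bounded in the reverse direction, and justifying that $\varepsilon K R(z_0)$ is defined. I would handle this by reading hypothesis~3 as including $\operatorname{Ran}R(z_0)\subseteq\mathcal{D}(K)$, or else by testing on the dense set $R_\varepsilon(z_0)^{-1}$-images. The compactness in hypothesis~2 is not needed for the limit itself. It only ensures that the approximants fall under the discrete-spectrum theory, so that Theorem~\ref{thm:level1_non_self_adjoint_integrated} and Theorem~\ref{thm:continuous_spectrum_approximation_integrated} can be applied afterwards.
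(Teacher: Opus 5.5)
Your argument at $z_0$ is the same as the paper's: the identity $R_\varepsilon(z_0)-R(z_0)=R_\varepsilon(z_0)\,\varepsilon K\,R(z_0)$, the estimate with the uniformly bounded factor on the left, and hypotheses~3 and~4 used in the same way. The difference is in passing to other $z$.

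The paper does not propagate anything. It fixes a $z$ at which the uniform bound and the strong convergence $\varepsilon K(zI-X)^{-1}\to 0$ are \emph{assumed}, and reruns the identity there, so the extension is built into the hypotheses. Your Neumann-series argument derives it instead. On the disk $|z-z_0|\sup_\varepsilon\|R_\varepsilon(z_0)\|<1$ it gives, with no further assumption:
\begin{itemize}
\item $z\in\rho(X_\varepsilon)\cap\rho(X)$;
\item a uniform bound on $R_\varepsilon(z)$;
\item strong convergence, since the limit series also converges there because $\|R(z_0)\|\le\liminf_{\varepsilon\to0}\|R_\varepsilon(z_0)\|$.
\end{itemize}
Chaining then reaches the whole component of the region of uniform boundedness that contains $z_0$. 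This is more informative than the paper's version.

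A cheaper improvement, closer to the paper's route, is the identity $\varepsilon K R(z)=\varepsilon K R(z_0)\,[I+(z_0-z)R(z)]$ for $z\in\rho(X)$. It shows that hypothesis~3 at $z_0$ transfers automatically to every $z\in\rho(X)$. The paper's per-$z$ argument then needs only $\sup_\varepsilon\|R_\varepsilon(z)\|<\infty$ at that single $z$, and no connectedness.

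On domains, you are right, and more careful than the paper, that the identity needs $\operatorname{Ran}R(z_0)=\mathcal{D}(X)\subseteq\mathcal{D}(K)$. Together with hypothesis~1 this forces $\mathcal{D}(X)=\mathcal{D}(K)=\mathcal{D}(X_\varepsilon)$. It is implicit in hypothesis~3 being a statement about an everywhere-defined operator, and the paper uses it silently.

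Your fallbacks would not rescue the case where it fails. The identity $R_\varepsilon(z_0)-R(z_0)=R(z_0)\,\varepsilon K\,R_\varepsilon(z_0)$ does hold under hypothesis~1 alone, but the hypotheses give no control of $\varepsilon K R_\varepsilon(z_0)$. Restricting the primary identity to $\{u: R(z_0)u\in\mathcal{D}(K)\}=(z_0I-X)\mathcal{D}(K)$ helps only if that set is dense, i.e.\ if $\mathcal{D}(K)$ is a core for $X$, which is not assumed. So keep the primary reading and drop the fallbacks.
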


\begin{proof}
Fix \(z\) in the common resolvent set such that \(z \in \rho(X) \cap \rho(X_\varepsilon)\) for all sufficiently small \(\varepsilon > 0\), and such that the uniform boundedness of \((zI - X_\varepsilon)^{-1}\) and the strong convergence \(\varepsilon K (zI - X)^{-1} \to 0\) hold. (By hypothesis, at least \(z = z_0\) satisfies these; the extension to other \(z\) requires verification case by case.)

For notational simplicity, write
\[
R(z,X) := (zI - X)^{-1}, \qquad
R(z,X_\varepsilon) := (zI - X_\varepsilon)^{-1}.
\]

Since \(X_\varepsilon = X + \varepsilon K\), we have
\[
zI - X_\varepsilon = zI - X - \varepsilon K.
\]

Using the resolvent identity,
\[
R(z,X_\varepsilon) - R(z,X)
= R(z,X_\varepsilon)(X_\varepsilon - X)R(z,X)
= R(z,X_\varepsilon) (\varepsilon K) R(z,X).
\]

Let \(u \in \mathcal{H}\). Then
\[
\bigl( R(z,X_\varepsilon) - R(z,X) \bigr) u
= R(z,X_\varepsilon) \bigl( \varepsilon K R(z,X) u \bigr).
\]

Taking norms,
\[
\bigl\| \bigl( R(z,X_\varepsilon) - R(z,X) \bigr) u \bigr\|
\le \| R(z,X_\varepsilon) \| \cdot \bigl\| \varepsilon K R(z,X) u \bigr\|.
\]

By assumption, the resolvents are uniformly bounded for sufficiently small \(\varepsilon\). Hence there exists a constant \(M_z > 0\) such that
\[
\| R(z,X_\varepsilon) \| \le M_z \quad \text{for all small } \varepsilon.
\]

By the strong convergence assumption,
\[
\varepsilon K R(z,X) u \to 0 \quad \text{as } \varepsilon \to 0.
\]

Therefore,
\[
\bigl\| \bigl( R(z,X_\varepsilon) - R(z,X) \bigr) u \bigr\|
\le M_z \cdot \bigl\| \varepsilon K R(z,X) u \bigr\| \to 0.
\]

Since this holds for every \(u \in \mathcal{H}\), we obtain
\[
R(z,X_\varepsilon) \xrightarrow{SOT} R(z,X) \quad \text{as } \varepsilon \to 0.
\]

Thus,
\[
\operatorname*{s-}\lim_{\varepsilon \to 0} (zI - X_\varepsilon)^{-1} = (zI - X)^{-1}.
\]

If the above assumptions (uniform resolvent bounds and strong convergence) hold uniformly on compact subsets of \(\rho(X)\), then the same argument applies locally uniformly in \(z\). In particular, strong resolvent convergence at one admissible point extends to the corresponding common resolvent region where the conditions remain valid. This completes the proof.
\end{proof}

\begin{remark}[Verifying the hypotheses]
The assumptions of Theorem~\ref{thm:conditional_compactifying_regularization} are not automatic. They must be verified case by case. For example:
\begin{itemize}
    \item If \(X\) is relatively \(K\)-bounded with sufficiently small relative bound, then \(X_\varepsilon\) is closed. \textbf{Compact resolvent} of \(X_\varepsilon\) requires additional compactness assumptions and must be verified separately.
    \item The strong convergence \(\varepsilon K (zI - X)^{-1} \to 0\) holds if \(K(zI - X)^{-1}\) is bounded, which requires \(\operatorname{Ran}(zI - X)^{-1} \subseteq \mathcal{D}(K)\).
    \item Uniform boundedness of the resolvents follows from standard resolvent estimates if the numerical range is contained in a sector.
\end{itemize}
\end{remark}

\begin{corollary}[Norm convergence under stronger assumptions]
\label{cor:norm_convergence_regularization}
If, in addition to the hypotheses of Theorem~\ref{thm:conditional_compactifying_regularization},
\[
\| \varepsilon K (z_0 I - X)^{-1} \| \to 0
\]
and the resolvents \((zI - X_\varepsilon)^{-1}\) are uniformly bounded for \(z\) in a compact contour \(\Gamma\) enclosing the spectra, then the convergence holds in operator norm.
\end{corollary}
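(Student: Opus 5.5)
The plan is to mirror the proof of Theorem~\ref{thm:conditional_compactifying_regularization}, replacing strong limits by norm estimates, and then to integrate over $\Gamma$ exactly as in the single-operator convergence theorem. The starting point is again the second resolvent identity
\[
R(z,X_\varepsilon)-R(z,X)=R(z,X_\varepsilon)\,(\varepsilon K)\,R(z,X),\qquad z\in\Gamma .
\]
The hypothesis controls only $\varepsilon K R(z_0,X)$, so the first step is to transfer it to an arbitrary point $z\in\Gamma$. We factor
\[
\varepsilon K R(z,X)=\varepsilon K R(z_0,X)\,(z_0I-X)R(z,X)=\varepsilon K R(z_0,X)\bigl(I+(z_0-z)R(z,X)\bigr).
\]
This is the same factorization used in the proof of Theorem~\ref{thm:level2_non_self_adjoint_integrated}. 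The equality holds on all of $\mathcal{H}$ because $R(z,X)$ maps $\mathcal{H}$ into $\mathcal{D}(X)$, and $(z_0I-X)R(z,X)$ is bounded.

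Since $\Gamma$ is compact and lies in $\rho(X)$, the quantity $\sup_{z\in\Gamma}\|R(z,X)\|$ is finite. Writing $\delta_\varepsilon:=\|\varepsilon K R(z_0,X)\|$, this gives
\[
\sup_{z\in\Gamma}\|\varepsilon K R(z,X)\|\le \delta_\varepsilon\Bigl(1+\sup_{z\in\Gamma}|z_0-z|\,\|R(z,X)\|\Bigr)=:C_\Gamma\,\delta_\varepsilon .
\]
The assumed uniform bound $\sup_{z\in\Gamma}\|R(z,X_\varepsilon)\|\le M$ then yields
\[
\sup_{z\in\Gamma}\|R(z,X_\varepsilon)-R(z,X)\|\le M\,C_\Gamma\,\delta_\varepsilon\to0 .
\]
This is norm resolvent convergence, uniform on $\Gamma$.

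The final step inserts this bound into the Dunford integral:
\[
\|f(X_\varepsilon)-f(X)\|\le \frac{L(\Gamma)}{2\pi}\,\sup_\Gamma|f|\;M\,C_\Gamma\,\delta_\varepsilon .
\]
This gives operator-norm convergence with a linear rate in $\|\varepsilon K R(z_0,X)\|$. For the tensor-lifted version, the telescoping tensor identity from Theorem~\ref{thm:multivariate_convergence_tensor_lifted} produces an additive bound $C_f\sum_j\delta_\varepsilon^{(j)}$.

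The main obstacle is the meaning of ``$\Gamma$ enclosing the spectra'' when $X$ lacks compact resolvent and may have unbounded spectrum. The Dunford integral over a compact contour then represents only the part of the calculus attached to the enclosed spectral set, or requires $f$ to be holomorphic at infinity. I would therefore state the conclusion first as uniform norm resolvent convergence on $\Gamma$. The norm convergence of $f(X_\varepsilon)$ would then be asserted for whatever contour representation is admissible, either a bounded enclosed spectral part or the extended-plane Dunford calculus.

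A secondary point is that $\Gamma\subset\rho(X_\varepsilon)$ for small $\varepsilon$ is implicit in the uniform-bound hypothesis. If it were not assumed, it could be derived from a Neumann-series argument once $M C_\Gamma\delta_\varepsilon<1$. In that case the roles of $X$ and $X_\varepsilon$ in the identity are reversed, and $\sup_\Gamma\|R(z,X)\|$ is used in place of $M$.
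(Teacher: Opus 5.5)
Your proposal is correct and follows the paper's proof essentially step for step. Both use the second resolvent identity, then the factorization $\varepsilon K R(z,X)=\varepsilon K R(z_0,X)\,(z_0I-X)R(z,X)$ to transfer the hypothesis from $z_0$ to all of $\Gamma$, then the uniform bound $M$, and finally the Dunford-integral estimate. Your explicit constant $C_\Gamma$ (via $(z_0I-X)R(z,X)=I+(z_0-z)R(z,X)$) and your caveat about compact contours when the spectrum is unbounded are minor refinements that the paper leaves implicit.
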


\begin{proof}
We start from the resolvent identity used in Theorem~\ref{thm:conditional_compactifying_regularization}:
\[
(zI - X_\varepsilon)^{-1} - (zI - X)^{-1} = (zI - X_\varepsilon)^{-1} (\varepsilon K) (zI - X)^{-1},
\]
where the sign has been absorbed (norms are unaffected).

Taking operator norms on both sides and using submultiplicativity,
\[
\| (zI - X_\varepsilon)^{-1} - (zI - X)^{-1} \|
\le
\| (zI - X_\varepsilon)^{-1} \| \cdot \| \varepsilon K (zI - X)^{-1} \|.
\]

By hypothesis, the resolvents \((zI - X_\varepsilon)^{-1}\) are uniformly bounded for sufficiently small \(\varepsilon\) and for all \(z\) on the compact contour \(\Gamma\). That is, there exists a constant \(M > 0\) such that
\[
\sup_{z \in \Gamma} \sup_{0 < \varepsilon \le \varepsilon_0} \| (zI - X_\varepsilon)^{-1} \| \le M.
\]

Also by hypothesis,
\[
\| \varepsilon K (z_0 I - X)^{-1} \| \to 0 \quad \text{as } \varepsilon \to 0.
\]
To extend this to arbitrary \(z \in \Gamma\), note that
\[
\varepsilon K (zI - X)^{-1} = \varepsilon K (z_0 I - X)^{-1} \cdot (z_0 I - X)(zI - X)^{-1},
\]
where the second factor is bounded uniformly for \(z \in \Gamma\). Hence
\[
\sup_{z \in \Gamma} \| \varepsilon K (zI - X)^{-1} \| \to 0.
\]

Therefore,
\[
\sup_{z \in \Gamma} \| (zI - X_\varepsilon)^{-1} - (zI - X)^{-1} \|
\le
M \cdot \sup_{z \in \Gamma} \| \varepsilon K (zI - X)^{-1} \|
\to 0
\]
as \(\varepsilon \to 0\). This establishes uniform norm resolvent convergence on \(\Gamma\).

\medskip
\noindent
\textbf{Extension to the functional calculus.}
For the contour-integral representation (single-operator case),
\[
f_{\otimes}(X_\varepsilon) - f_{\otimes}(X)
=
\frac{1}{2\pi i}
\int_{\Gamma}
f(z)
\Big[
(zI - X_\varepsilon)^{-1} - (zI - X)^{-1}
\Big]
\, dz.
\]

Taking operator norms,
\[
\| f_{\otimes}(X_\varepsilon) - f_{\otimes}(X) \|
\le
\frac{L(\Gamma)}{2\pi}
\sup_{z \in \Gamma} |f(z)|
\sup_{z \in \Gamma}
\| (zI - X_\varepsilon)^{-1} - (zI - X)^{-1} \|,
\]
where \(L(\Gamma)\) is the length of the contour \(\Gamma\).

Since the right-hand side tends to zero as \(\varepsilon \to 0\) by the uniform norm resolvent convergence established above, we obtain
\[
\lim_{\varepsilon \to 0} \| f_{\otimes}(X_\varepsilon) - f_{\otimes}(X) \| = 0,
\]
i.e., convergence in operator norm.

\medskip
\noindent
\textbf{Explicit error bound.}
Combining the estimates, we obtain the explicit bound
\[
\| f_{\otimes}(X_\varepsilon) - f_{\otimes}(X) \|
\le
\frac{L(\Gamma)}{2\pi}
\sup_{z \in \Gamma} |f(z)|
\cdot
\sup_{z \in \Gamma} \| (zI - X_\varepsilon)^{-1} \|
\cdot
\sup_{z \in \Gamma} \| \varepsilon K (zI - X)^{-1} \|.
\]

Since \(\| (zI - X_\varepsilon)^{-1} \|\) is uniformly bounded by \(M\), we have
\[
\| f_{\otimes}(X_\varepsilon) - f_{\otimes}(X) \|
\le
C_f \cdot \sup_{z \in \Gamma} \| \varepsilon K (zI - X)^{-1} \|,
\]
where
\[
C_f = \frac{L(\Gamma)}{2\pi} \sup_{z \in \Gamma} |f(z)| \cdot M.
\]

This completes the proof.
\end{proof}

\subsubsection{Conditional Convergence of the Functional Calculus via Regularization}

The following theorem provides a conditional convergence result for the contour-integral functional calculus under the compactifying regularization approach.

\begin{theorem}[Conditional convergence via compactifying regularization]
\label{thm:conditional_regularization_convergence}
Let \(X_1,\ldots,X_r\) be spectral operators (in the sense of Dunford) on separable Hilbert spaces
\(\mathcal{H}_1,\ldots,\mathcal{H}_r\). Suppose there exist positive self-adjoint operators
\(K_1,\ldots,K_r\) with compact resolvent such that, for each \(j\) and every \(\varepsilon > 0\),
\[
X_{j,\varepsilon} := X_j + \varepsilon K_j
\]
is closed and has compact resolvent.

Assume further that there exist admissible contours \(\Gamma_1,\ldots,\Gamma_r\) satisfying:
\begin{enumerate}
    \item \(\mathcal{D}(K_j) \subseteq \mathcal{D}(X_j)\);
    \item The contours \(\Gamma_j\) are contained in the resolvent sets \(\rho(X_j) \cap \rho(X_{j,\varepsilon})\) for all sufficiently small \(\varepsilon > 0\);
    \item For every \(u_j \in \mathcal{H}_j\),
    \[
    \sup_{z \in \Gamma_j}
    \bigl\| \bigl( (zI - X_{j,\varepsilon})^{-1} - (zI - X_j)^{-1} \bigr) u_j \bigr\|
    \to 0 \quad \text{as } \varepsilon \to 0;
    \]
    \item The resolvents \((zI - X_{j,\varepsilon})^{-1}\) and \((zI - X_j)^{-1}\) are uniformly bounded on \(\Gamma_j\) (i.e., \(\sup_{z \in \Gamma_j} \| (zI - X_{j,\varepsilon})^{-1} \| \le M_j\) for some \(M_j\) independent of \(\varepsilon\)).
\end{enumerate}

Let \(f\) be holomorphic on a neighborhood of the product contour \(\Gamma_1 \times \cdots \times \Gamma_r\) and assume the corresponding multivariate contour integral
\[
f_{\otimes}(X_1,\ldots,X_r)
:=
\frac{1}{(2\pi i)^r}
\int_{\Gamma_1}
\cdots
\int_{\Gamma_r}
f(z_1,\ldots,z_r)
\,
\bigotimes_{j=1}^{r}
(z_jI - X_j)^{-1}
\,dz_1\cdots dz_r
\]
converges (e.g., in the strong operator topology). Then
\[
\operatorname*{s-}\lim_{\varepsilon \to 0}
f_{\otimes}(X_{1,\varepsilon}, \ldots, X_{r,\varepsilon})
=
f_{\otimes}(X_1, \ldots, X_r)
\]
in the strong operator topology.

If, moreover,
\[
\sup_{z \in \Gamma_j}
\bigl\| (zI - X_{j,\varepsilon})^{-1} - (zI - X_j)^{-1} \bigr\|
\to 0 \quad \text{as } \varepsilon \to 0
\]
for each \(j\), then the convergence holds in operator norm and
\[
\bigl\| f_{\otimes}(X_{1,\varepsilon}, \ldots, X_{r,\varepsilon}) - f_{\otimes}(X_1, \ldots, X_r) \bigr\|
\le
C_f \sum_{j=1}^{r}
\sup_{z \in \Gamma_j}
\bigl\| (zI - X_{j,\varepsilon})^{-1} - (zI - X_j)^{-1} \bigr\|,
\]
where \(C_f\) depends only on \(f\) and the contour geometry.
\end{theorem}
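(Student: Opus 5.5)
The plan is to argue exactly as in the proofs of Theorem~\ref{thm:multivariate_convergence_tensor_lifted} and Theorem~\ref{thm:continuous_spectrum_approximation_integrated}. Hypotheses 3 and 4 replace the Level~1/Level~2 assumptions used there. Write $R_j(z)=(zI-X_j)^{-1}$, $R_{j,\varepsilon}(z)=(zI-X_{j,\varepsilon})^{-1}$ and $\Delta_{j,\varepsilon}(z)=R_{j,\varepsilon}(z)-R_j(z)$. By hypothesis 2, both $f_\otimes(X_{1,\varepsilon},\ldots,X_{r,\varepsilon})$ and $f_\otimes(X_1,\ldots,X_r)$ are given by the contour integral over the same product contour $\Gamma_1\times\cdots\times\Gamma_r$. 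Their difference is therefore $(2\pi i)^{-r}\int f(z)\,[\bigotimes_j R_{j,\varepsilon}(z_j)-\bigotimes_j R_j(z_j)]\,dz$. I then apply the telescoping tensor identity to write the bracket as
\[
\sum_{k=1}^r \Bigl(\bigotimes_{j<k}R_{j,\varepsilon}(z_j)\Bigr)\otimes\Delta_{k,\varepsilon}(z_k)\otimes\Bigl(\bigotimes_{j>k}R_j(z_j)\Bigr).
\]
Hypothesis 4 gives $\|R_{j,\varepsilon}\|,\|R_j\|\le M_j$ on $\Gamma_j$, uniformly in small $\varepsilon$. Since $f$ is continuous on the compact product contour, $M_f=\sup|f|<\infty$.

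\textbf{Norm part.} Taking norms gives $\|\bigotimes_j A_j\|=\prod_j\|A_j\|$ on the Hilbert tensor product. Each telescoped summand is then bounded by $\prod_{j\ne k}M_j\,\sup_{\Gamma_k}\|\Delta_{k,\varepsilon}\|$. Integrating over the contours gives the stated bound with $C_f=(2\pi)^{-r}\prod_j L(\Gamma_j)\,M_f\max_k\prod_{j\ne k}M_j$. This constant is independent of $\varepsilon$, and norm convergence follows.

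\textbf{Strong part.} This is the step I expect to need care, since tensor products of merely strongly convergent factors do not converge strongly in general without uniform bounds. I would first take an elementary tensor $u=u_1\otimes\cdots\otimes u_r$. The $k$-th telescoped term applied to $u$ has norm at most $\prod_{j\ne k}M_j\|u_j\|\cdot\|\Delta_{k,\varepsilon}(z_k)u_k\|$. By hypothesis 3, $\sup_{z\in\Gamma_k}\|\Delta_{k,\varepsilon}(z)u_k\|\to0$, and this uniformity in $z$ lets me integrate directly without dominated convergence. Hence the difference applied to $u$ tends to $0$, and by linearity the same holds on the algebraic tensor product, which is dense in $\mathcal H_\otimes$. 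To pass to all of $\mathcal H_\otimes$ I need a uniform bound on the operators themselves. The norm estimate above, with $\Delta$ replaced by the full resolvents, gives $\|f_\otimes(X_{1,\varepsilon},\ldots,X_{r,\varepsilon})\|\le(2\pi)^{-r}\prod_j L(\Gamma_j)M_f\prod_jM_j$, uniformly in $\varepsilon$. A standard $\varepsilon/3$ argument then extends strong convergence from the dense subspace to all of $\mathcal H_\otimes$. The convergence assumption on $f_\otimes(X_1,\ldots,X_r)$ is used only to ensure the limit object is well defined. Hypotheses 1 and the compactness of the resolvents play no role beyond guaranteeing that the regularized calculus is itself well defined.
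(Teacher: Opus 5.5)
Your proposal is correct and follows the paper's proof essentially step for step. Both arguments use the contour-integral representation of the two calculi, the telescoping tensor identity, the uniform resolvent bounds of hypothesis 4 for the norm estimate, and elementary tensors plus density for the strong part. Your explicit uniform bound on $\|f_\otimes(X_{1,\varepsilon},\ldots,X_{r,\varepsilon})\|$ for the $\varepsilon/3$ extension, and your use of the $\sup_z$ uniformity in hypothesis 3 instead of dominated convergence, make precise what the paper compresses into ``by linearity and density.''
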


\begin{proof}
For each fixed \(\varepsilon > 0\), the regularized operators \(X_{j,\varepsilon}\) have compact resolvent by hypothesis. Hence the tensor-lifted functional calculus \(f_{\otimes}(X_{1,\varepsilon},\ldots,X_{r,\varepsilon})\) is well-defined via the unified compact formula (Theorem~\ref{thm:unified_compact_formula}) and admits the contour-integral representation
\[
f_{\otimes}(X_{1,\varepsilon},\ldots,X_{r,\varepsilon})
=
\frac{1}{(2\pi i)^r}
\int_{\Gamma_1}
\cdots
\int_{\Gamma_r}
f(z_1,\ldots,z_r)
\,
\bigotimes_{j=1}^{r}
(z_jI - X_{j,\varepsilon})^{-1}
\,dz_1\cdots dz_r.
\]

The contours \(\Gamma_j\) are chosen as in hypothesis (2) so that they lie in the common resolvent set for all sufficiently small \(\varepsilon\). By hypothesis (4), the resolvents are uniformly bounded on these contours.

\medskip
\noindent
\textbf{Strong convergence (Level~1).}
Consider the difference
\[
f_{\otimes}(X_{1,\varepsilon},\ldots,X_{r,\varepsilon}) - f_{\otimes}(X_1,\ldots,X_r)
=
\frac{1}{(2\pi i)^r}
\int_{\Gamma_1}
\cdots
\int_{\Gamma_r}
f(z_1,\ldots,z_r)
\,
\Delta_\varepsilon(z_1,\ldots,z_r)
\,dz_1\cdots dz_r,
\]
where
\[
\Delta_\varepsilon(z_1,\ldots,z_r)
=
\bigotimes_{j=1}^{r} (z_jI - X_{j,\varepsilon})^{-1}
-
\bigotimes_{j=1}^{r} (z_jI - X_j)^{-1}.
\]

For any elementary tensor \(u = u_1 \otimes \cdots \otimes u_r \in \mathcal{H}_{\otimes}\),
\[
\Delta_\varepsilon(z_1,\ldots,z_r) u
= \sum_{k=1}^{r}
\left(
\bigotimes_{j=1}^{k-1} (z_jI - X_{j,\varepsilon})^{-1}
\right)
\otimes
\bigl( (z_kI - X_{k,\varepsilon})^{-1} - (z_kI - X_k)^{-1} \bigr)
\otimes
\left(
\bigotimes_{j=k+1}^{r} (z_jI - X_j)^{-1}
\right) u.
\]

Taking norms and using the uniform boundedness of the resolvents, we obtain
\[
\| \Delta_\varepsilon(z_1,\ldots,z_r) u \|
\le
C \sum_{k=1}^{r}
\bigl\| \bigl( (z_kI - X_{k,\varepsilon})^{-1} - (z_kI - X_k)^{-1} \bigr) u_k \bigr\|
\cdot \prod_{j \neq k} \| u_j \|,
\]
where \(C\) depends only on the uniform resolvent bounds.

By hypothesis (3), for each \(k\) and each \(u_k \in \mathcal{H}_k\),
\[
\sup_{z_k \in \Gamma_k}
\bigl\| \bigl( (z_kI - X_{k,\varepsilon})^{-1} - (z_kI - X_k)^{-1} \bigr) u_k \bigr\|
\to 0 \quad \text{as } \varepsilon \to 0.
\]

Hence \(\| \Delta_\varepsilon(z_1,\ldots,z_r) u \|\) converges to zero uniformly in \((z_1,\ldots,z_r) \in \Gamma_1 \times \cdots \times \Gamma_r\). The integrand is uniformly bounded on the compact product contour. By the dominated convergence theorem for operator-valued integrals (strong operator topology), we obtain
\[
f_{\otimes}(X_{1,\varepsilon},\ldots,X_{r,\varepsilon}) u \to f_{\otimes}(X_1,\ldots,X_r) u
\]
for every elementary tensor \(u\). By linearity and density, strong convergence holds on \(\mathcal{H}_{\otimes}\).

\medskip
\noindent
\textbf{Norm convergence (Level~2).}
If the resolvent convergence is uniform in operator norm on the contours (hypothesis for the second part), then
\[
\sup_{z \in \Gamma_j} \| (zI - X_{j,\varepsilon})^{-1} - (zI - X_j)^{-1} \| \to 0.
\]

Using the telescoping tensor identity and taking operator norms,
\[
\| \Delta_\varepsilon(z_1,\ldots,z_r) \|
\le
C \sum_{j=1}^{r}
\| (z_jI - X_{j,\varepsilon})^{-1} - (z_jI - X_j)^{-1} \|.
\]

Integrating over the contours yields
\[
\| f_{\otimes}(X_{1,\varepsilon},\ldots,X_{r,\varepsilon}) - f_{\otimes}(X_1,\ldots,X_r) \|
\le
C_f \sum_{j=1}^{r}
\sup_{z \in \Gamma_j}
\| (zI - X_{j,\varepsilon})^{-1} - (zI - X_j)^{-1} \|,
\]
where \(C_f\) incorporates the contour lengths, the supremum of \(|f|\) on the product contour, and the uniform resolvent bounds. The right-hand side tends to zero as \(\varepsilon \to 0\), establishing operator norm convergence with the explicit error bound stated in the theorem.

This completes the proof.
\end{proof}

\begin{remark}[What is proven conditionally]
Theorem~\ref{thm:conditional_regularization_convergence} shows that \textbf{if} the regularized operators \(X_{j,\varepsilon}\) have compact resolvent and converge in the strong resolvent sense, \textbf{then} the contour-integral functional calculus converges. The verification of these hypotheses for a given operator \(X_j\) and regularizer \(K_j\) is a nontrivial task that depends on the specific structure of \(X_j\).
\end{remark}

\begin{remark}[Limitation: Nilpotent stability]
Even when the contour-integral convergence holds, convergence of the explicit projector--nilpotent expansion (the unified compact formula with \(A \neq \emptyset\) terms) requires additional \emph{nilpotent stability} assumptions. Specifically, for the spectral decomposition to pass to the limit, one would need convergence of the Riesz projectors and nilpotent components in a suitable sense (e.g., strong convergence of \(P_{j,\varepsilon}(\lambda) \to P_j(\lambda)\) and \(N_{j,\varepsilon}(\lambda)^{q} P_{j,\varepsilon}(\lambda) \to N_j(\lambda)^{q} P_j(\lambda)\) for each spectral value \(\lambda\)). This is not automatic and remains an open problem, particularly for operators with continuous spectrum where the pointwise spectral decomposition is not available. The compactifying regularization preserves the contour integrals but may not preserve the pointwise nilpotent structure on the continuous spectrum. \textbf{The validity of the explicit projector--nilpotent expansion for general spectral operators without compact resolvent remains conjectural.}
\end{remark}

\subsubsection{Conjecture and Open Problem Statement}

Extending the unified compact formula and the associated convergence theory to general unbounded non-self-adjoint operators without compact resolvent remains an important open problem. The compactifying regularization approach provides a promising direction but requires careful hypothesis verification. We formulate the following optimistic conjecture.

\begin{conjecture}[Extension beyond compact resolvent via regularization]
\label{conj:general_unbounded_no_compact_resolvent}
Let \(X_1, \ldots, X_r\) be spectral operators (in the sense of Dunford) on separable Hilbert spaces \(\mathcal{H}_1, \ldots, \mathcal{H}_r\). Assume there exist positive self-adjoint operators \(K_1, \ldots, K_r\) with compact resolvent such that the regularized operators \(X_{j,\varepsilon} = X_j + \varepsilon K_j\) satisfy the hypotheses of Theorem~\ref{thm:conditional_regularization_convergence} for sufficiently small \(\varepsilon > 0\).

Let \(\widetilde{X}_1, \ldots, \widetilde{X}_r\) be their tensor liftings on
\[
\mathcal{H}_{\otimes} = \mathcal{H}_1 \otimes \cdots \otimes \mathcal{H}_r.
\]

Define the lifted spectral measures and nilpotent components:
\[
d\widetilde{E}_j(\lambda_j) = I_1 \otimes \cdots \otimes dE_{X_j}(\lambda_j) \otimes \cdots \otimes I_r,
\qquad
\widetilde{N}_j(\lambda_j) = I_1 \otimes \cdots \otimes N_j(\lambda_j) \otimes \cdots \otimes I_r,
\]
and analogously for the regularized operators \(X_{j,\varepsilon}\).

Then the contour-integral representation of the tensor-lifted functional calculus is well-defined for holomorphic functions \(f\) on an open polydisk containing the product spectrum. Moreover,
\[
\operatorname*{s-}\lim_{\varepsilon \to 0} f_{\otimes}(X_{1,\varepsilon}, \ldots, X_{r,\varepsilon}) = f_{\otimes}(X_1, \ldots, X_r)
\]
in the strong operator topology. If the resolvent convergence is norm resolvent, the limit holds in operator norm with explicit error bounds.

The validity of the explicit projector--nilpotent expansion (the unified compact formula with \(A \neq \emptyset\) terms) for the limit operators remains conjectural and requires additional spectral stability assumptions.
\end{conjecture}

\begin{remark}[What is proven vs. what remains open]
The current state of the theory can be summarized as follows:
\begin{itemize}
    \item \textbf{Proven conditionally:} If the regularized operators \(X_{j,\varepsilon}\) have compact resolvent and satisfy the stated resolvent convergence hypotheses, then the \textbf{contour-integral representation} of the functional calculus converges. This is established in Theorem~\ref{thm:conditional_regularization_convergence}.
    
    \item \textbf{Open:} The explicit convergence of the nilpotent derivative terms (\(A \neq \emptyset\)) in the unified compact formula requires additional stability assumptions on the spectral projectors and nilpotent components. This remains an open problem.
    
    \item \textbf{Open:} Verifying the hypotheses of the conditional theorems for concrete classes of operators (e.g., Schrödinger operators with complex potentials) is an ongoing research direction.
    
    \item \textbf{Open:} Norm resolvent convergence of the regularized operators to the limit is not generally available when the limit operator has continuous spectrum. Hence Level~2 convergence (operator norm with error bounds) may not be achievable in full generality.
\end{itemize}
\end{remark}

\begin{remark}[Open research questions]
The following specific questions remain unresolved and form natural directions for future research:
\begin{enumerate}
    \item Under what minimal assumptions do the nilpotent components stabilize under compactifying regularization?
    
    \item Can one characterize the class of spectral operators for which the Riesz projectors converge strongly under the regularization \(X + \varepsilon K\)?
    
    \item Is there a choice of the compactifying operator \(K\) (e.g., \(K = (I + X^*X)^{1/2}\)) that guarantees nilpotent stability?
    
    \item Can one establish norm resolvent convergence for the regularized operators under additional assumptions (e.g., sectoriality or numerical range bounds)?
    
    \item Does the unified compact formula hold for spectral operators without compact resolvent directly via spectral integrals, without passing through the regularization limit?
\end{enumerate}
Developing a stable tensor-lifted functional calculus beyond the compact-resolvent regime therefore constitutes a natural direction for future research.
\end{remark}

\section{Examples}\label{sec:examples}

\subsection{Example: Finite-Dimensional Non-Commuting Matrices}

We illustrate the tensor-lifted functional calculus in the simplest nontrivial setting: finite-dimensional non-commuting matrices. In this case, the contour-integral formulation is exact, the Level~2 convergence theory (Theorem~\ref{thm:level2_non_self_adjoint_integrated}) holds trivially because all norms are equivalent in finite dimensions, and the full nilpotent structure (Theorem~\ref{thm:unified_compact_formula}) is preserved.

Let
\[
X_1 =
\begin{pmatrix}
1 & 1\\
0 & 1
\end{pmatrix}, \qquad
X_2 =
\begin{pmatrix}
0 & 0\\
1 & 0
\end{pmatrix}.
\]

Then
\[
X_1 X_2 =
\begin{pmatrix}
1 & 0\\
1 & 0
\end{pmatrix}, \qquad
X_2 X_1 =
\begin{pmatrix}
0 & 0\\
1 & 1
\end{pmatrix},
\]
and therefore
\[
[X_1, X_2] = X_1 X_2 - X_2 X_1 =
\begin{pmatrix}
1 & 0\\
0 & -1
\end{pmatrix} \neq 0.
\]
Hence the matrices are non-commuting.

The matrix \(X_1\) contains a nontrivial Jordan block:
\[
X_1 = I + N_1, \qquad
N_1 = \begin{pmatrix}
0 & 1\\
0 & 0
\end{pmatrix}, \qquad
N_1^2 = 0.
\]
Similarly, \(X_2\) is nilpotent:
\[
X_2^2 = 0.
\]
Thus both operators possess nontrivial nilpotent structure, and the full projector--nilpotent decomposition is exact.

\medskip
\noindent
\textbf{Tensor lifting construction.}
Let \(f(z_1, z_2)\) be holomorphic on a neighborhood containing the product spectrum
\[
\sigma(X_1) \times \sigma(X_2) = \{1\} \times \{0\}.
\]
The tensor-lifted functional calculus is defined by (cf. Section~\ref{subsec:tensor_lifting_step0})
\[
f_{\otimes}(X_1, X_2) := f(\widetilde{X}_1, \widetilde{X}_2),
\]
where
\[
\widetilde{X}_1 = X_1 \otimes I, \qquad
\widetilde{X}_2 = I \otimes X_2.
\]

Since
\[
(\widetilde{X}_1)(\widetilde{X}_2) = (X_1 \otimes I)(I \otimes X_2) = X_1 \otimes X_2 = (I \otimes X_2)(X_1 \otimes I) = (\widetilde{X}_2)(\widetilde{X}_1),
\]
the lifted operators commute on the tensor-product space \(\mathcal{H}_{\otimes} = \mathbb{C}^2 \otimes \mathbb{C}^2 \cong \mathbb{C}^4\) even though the original matrices do not commute.

\medskip
\noindent
\textbf{Application of the unified compact formula.}
Because \(X_1\) and \(X_2\) are finite-dimensional, the unified compact formula (Theorem~\ref{thm:unified_compact_formula}) reduces to the discrete projector--nilpotent expansion (Theorem~\ref{thm:discrete_bounded_case}) with eigenvalues \(\lambda_1 = 1\) and \(\lambda_2 = 0\). The nilpotency indices are \(\nu_1(1) = 2\) (since \(N_1^2 = 0\)) and \(\nu_2(0) = 2\) (since \(X_2^2 = 0\)). Hence the summation over \(q_j\) runs from \(1\) to \(\nu_j(\lambda_j)-1 = 1\), i.e., only \(q_1 = 1\) and \(q_2 = 1\) contribute.

The subset decomposition \(A \subseteq \{1,2\}\) yields:
\begin{itemize}
    \item \(A = \emptyset\): pure spectral term \(f(1,0) \, I \otimes I\).
    \item \(A = \{1\}\): nilpotent correction \(\partial_1 f(1,0) \, N_1 \otimes I\).
    \item \(A = \{2\}\): nilpotent correction \(\partial_2 f(1,0) \, I \otimes X_2\).
    \item \(A = \{1,2\}\): mixed nilpotent correction \(\partial_1 \partial_2 f(1,0) \, N_1 \otimes X_2\).
\end{itemize}

Thus, the unified compact formula gives the exact expansion
\[
\boxed{
f_{\otimes}(X_1, X_2)
=
f(1,0) \, I \otimes I
+
\partial_1 f(1,0) \, N_1 \otimes I
+
\partial_2 f(1,0) \, I \otimes X_2
+
\partial_1 \partial_2 f(1,0) \, N_1 \otimes X_2.
}
\]

\medskip
\noindent
\textbf{Verification via direct computation.}
For elementary tensors \(u = u_1 \otimes u_2 \in \mathbb{C}^2 \otimes \mathbb{C}^2\), the action of \(\widetilde{X}_1 = X_1 \otimes I\) and \(\widetilde{X}_2 = I \otimes X_2\) is given by
\[
\widetilde{X}_1 (u_1 \otimes u_2) = (X_1 u_1) \otimes u_2, \qquad
\widetilde{X}_2 (u_1 \otimes u_2) = u_1 \otimes (X_2 u_2).
\]
Since \(\widetilde{X}_1\) and \(\widetilde{X}_2\) commute, the multivariate functional calculus can be computed via the Dunford integral or via power series expansion. One readily verifies that the expansion above matches the direct computation of \(f(\widetilde{X}_1, \widetilde{X}_2)\) for any analytic function \(f\).

\medskip
\noindent
\textbf{Convergence theory aspects.}
Because the operators are finite-dimensional:
\begin{enumerate}
    \item The contour-integral representation (Theorem~\ref{thm:unified_compact_formula}) converges absolutely in operator norm.
    \item Level~1 and Level~2 convergence (Section~\ref{sec:two_level_convergence}) hold automatically — all finite-dimensional compressions are exact for sufficiently large truncation dimension.
    \item The projector--nilpotent decomposition is exact and stable.
    \item All nilpotent derivative correction terms (\(A \neq \emptyset\)) in the unified compact formula are preserved exactly.
\end{enumerate}

\medskip
\noindent
\textbf{Key takeaways.}
This finite-dimensional example demonstrates three essential features of the tensor-lifted framework:
\begin{enumerate}
    \item \textbf{Non-commutativity is handled via tensor lifting.} Lifted operators commute even when original operators do not.
    \item \textbf{Nilpotent structure is explicitly captured.} The expansion includes derivative terms of all orders up to the nilpotency indices, including mixed partial derivatives.
    \item \textbf{The unified compact formula is exact.} No truncation or approximation error occurs; the formula matches the functional calculus defined via the commuting lifted operators.
\end{enumerate}

Thus, this example serves as a self-contained sanity check before proceeding to infinite-dimensional settings where the two-level convergence theory becomes nontrivial.

\subsection{Example: Quantum Harmonic Oscillator (Self-Adjoint Case)}

We now consider the quantum harmonic oscillator, which provides a canonical example of an unbounded self-adjoint operator with compact resolvent. In this setting, the two-level convergence theory (Section~\ref{sec:two_level_convergence}) applies naturally, while the nilpotent contributions in the unified compact formula vanish identically because the operator is self-adjoint.

\medskip
\noindent
\textbf{Operator definition.}
Let
\[
X = -\frac{d^2}{dx^2} + x^2
\]
acting on \(L^2(\mathbb{R})\), with domain
\[
\mathcal{D}(X) = \left\{ u \in L^2(\mathbb{R}) : u'',\, x^2 u \in L^2(\mathbb{R}) \right\},
\]
where the derivatives are understood in the distributional sense. The operator \(X\) is the standard quantum harmonic oscillator Hamiltonian. It is densely defined, self-adjoint, and positive.

\medskip
\noindent
\textbf{Spectral properties.}
The spectrum of \(X\) is discrete:
\[
\sigma(X) = \{2n+1 : n = 0, 1, 2, \ldots\},
\]
with corresponding Hermite eigenfunctions \(\{e_n\}_{n=0}^{\infty}\). These eigenfunctions form a complete orthonormal basis of \(L^2(\mathbb{R})\).

Define the finite-rank projections
\[
P_n = \sum_{k=0}^{n} |e_k\rangle\langle e_k|,
\]
and the finite-dimensional compressions
\[
X_n = P_n X P_n.
\]

\medskip
\noindent
\textbf{Compact resolvent property.}
Since the harmonic oscillator has compact resolvent (the confining potential \(x^2\) forces the spectrum to consist of isolated eigenvalues tending to infinity), \((zI - X)^{-1}\) is compact for every \(z \in \rho(X)\). This satisfies Assumption (A1) of Section~\ref{subsec:level1_assumptions}.

\medskip
\noindent
\textbf{Diagonalization and decay estimates.}
The Hermite basis diagonalizes the operator:
\[
X e_n = (2n+1) e_n.
\]
Consequently, the matrix representation of \(X\) in the Hermite basis is diagonal:
\[
\langle e_j, X e_k \rangle = (2n+1) \delta_{jk}.
\]

Since all off-diagonal entries are zero, the operator exhibits exact diagonal localization in the Hermite basis. For the purpose of Level~1 convergence, this diagonal structure (with no off-diagonal coupling) is sufficient to guarantee strong resolvent convergence of the finite-dimensional compressions; the exponential off-diagonal decay condition in Assumption (A3) may be replaced by exact diagonalization in this example.

\medskip
\noindent
\textbf{Core condition.}
The range \(\operatorname{ran}(P_n)\) is the linear span of the first \(n+1\) Hermite functions. Since finite linear combinations of Hermite functions are dense in the graph norm of \(X\), \(\operatorname{ran}(P_n)\) is a core for \(X\). This verifies Assumption (A2).

\medskip
\noindent
\textbf{Verification of Level~1 assumptions.}
Therefore, the Level~1 assumptions are satisfied:
\begin{enumerate}
    \item[(A1)] \(X\) has compact resolvent.
    \item[(A2)] \(\operatorname{ran}(P_n)\) is a core for \(X\).
    \item[(A3)] The operator is exactly diagonal in the Hermite basis (a stronger condition than exponential off-diagonal decay).
\end{enumerate}

\medskip
\noindent
\textbf{Strong resolvent convergence.}
By the theory developed in Section~\ref{subsec:level1_assumptions}, the finite-dimensional truncations satisfy strong resolvent convergence:
\[
\operatorname*{s-}\lim_{n \to \infty} (zI - X_n)^{-1} = (zI - X)^{-1}
\]
for all \(z \in \rho(X)\).

\medskip
\noindent
\textbf{Convergence of the functional calculus.}
Applying the Level~1 convergence theorem (Theorem~\ref{thm:level1_non_self_adjoint_integrated} or the single-operator version in Section~\ref{sec:two_level_convergence}), for every holomorphic function \(f\) on a neighborhood of the spectrum,
\[
\operatorname*{s-}\lim_{n \to \infty} f(X_n) = f(X)
\]
in the strong operator topology.

\medskip
\noindent
\textbf{Vanishing nilpotent structure.}
Since the operator \(X\) is self-adjoint, the generalized nilpotent component vanishes identically. (Self-adjoint operators are spectrally diagonalizable and possess no nontrivial Jordan blocks.) Consequently, every term in the unified compact formula (Theorem~\ref{thm:unified_compact_formula}) corresponding to \(A \neq \emptyset\) vanishes. Only the semisimple contribution \(A = \emptyset\) survives.

\medskip
\noindent
\textbf{Reduction to classical spectral theorem.}
Thus, the tensor-lifted functional calculus reduces exactly to the classical spectral theorem:
\[
f(X) = \int_{\sigma(X)} f(\lambda) \, dE_X(\lambda),
\]
where \(dE_X(\lambda)\) is the spectral measure of \(X\).

\medskip
\noindent
\textbf{Key takeaways.}
This example demonstrates:
\begin{enumerate}
    \item The quantum harmonic oscillator satisfies all Level~1 assumptions (compact resolvent, core approximation, exact diagonalization in the Hermite basis).
    \item Strong resolvent convergence holds for finite-dimensional truncations in the Hermite basis.
    \item The Level~1 convergence theorem guarantees strong operator topology convergence of the functional calculus.
    \item For self-adjoint operators, the unified compact formula recovers the classical spectral theorem as the semisimple special case (\(A = \emptyset\)) of the general projector--nilpotent framework.
\end{enumerate}

Thus, this example serves as a bridge between the general theory and classical quantum mechanics, illustrating how the unified framework reproduces known results while providing a rigorous convergence theory.

\subsection{Example: Anharmonic Oscillator}

We next consider the anharmonic oscillator, which provides an important example of an unbounded self-adjoint operator whose spectral structure is more complicated than the classical harmonic oscillator, but which still satisfies the Level~1 approximation framework developed in Section~\ref{sec:two_level_convergence}.

\medskip
\noindent
\textbf{Operator definition.}
Let
\[
X = -\frac{d^2}{dx^2} + x^4
\]
acting on \(L^2(\mathbb{R})\), with domain
\[
\mathcal{D}(X) = \left\{ u \in L^2(\mathbb{R}) : u'',\, x^4 u \in L^2(\mathbb{R}) \right\},
\]
where the derivatives are understood in the distributional sense. The operator \(X\) is densely defined, self-adjoint, and bounded from below. Since the confining potential \(V(x) = x^4\) tends to infinity as \(|x| \to \infty\), the resolvent
\[
(zI - X)^{-1}
\]
is compact for every \(z \in \rho(X)\). This satisfies Assumption (A1) of Section~\ref{subsec:level1_assumptions}.

\medskip
\noindent
\textbf{Spectral properties.}
Consequently, \(X\) has purely discrete spectrum:
\[
\sigma(X) = \{\lambda_n\}_{n=0}^{\infty}, \qquad \lambda_n \to \infty \text{ as } n \to \infty.
\]

Let \(\{e_n\}_{n=0}^{\infty}\) be the orthonormal eigenbasis of \(X\), i.e., \(X e_n = \lambda_n e_n\). This basis is complete in \(L^2(\mathbb{R})\) and is the natural choice for spectral approximation.

\medskip
\noindent
\textbf{Finite-rank projections and compressions.}
Define the finite-rank orthogonal projections
\[
P_n = \sum_{k=0}^{n} |e_k\rangle\langle e_k|,
\]
and the finite-dimensional compressions
\[
X_n = P_n X P_n.
\]

\medskip
\noindent
\textbf{Diagonalization and strong resolvent convergence.}
In the eigenbasis, the matrix representation of \(X\) is diagonal:
\[
\langle e_j, X e_k \rangle = \lambda_j \delta_{jk}.
\]

The spectral projections \(P_n\) converge strongly to the identity operator as \(n \to \infty\) (since the eigenbasis is complete). Consequently, the finite-dimensional compressions satisfy strong resolvent convergence:
\[
\operatorname*{s-}\lim_{n \to \infty} (zI - X_n)^{-1} = (zI - X)^{-1}
\]
for all \(z \in \rho(X)\). This follows directly from the spectral theorem and does not require any additional off-diagonal decay estimates.

\medskip
\noindent
\textbf{Core condition.}
The range \(\operatorname{ran}(P_n)\) is the linear span of the first \(n+1\) eigenfunctions. Since finite linear combinations of eigenfunctions are dense in the graph norm of \(X\) (the eigenfunctions form a complete orthonormal basis), \(\operatorname{ran}(P_n)\) is a core for \(X\). This verifies Assumption (A2).

\medskip
\noindent
\textbf{Verification of Level~1 framework.}
The anharmonic oscillator therefore satisfies the essential ingredients of the Level~1 approximation framework:
\begin{enumerate}
    \item[(A1)] Compact resolvent (true).
    \item[(A2)] Core approximation (true, using the eigenbasis).
    \item Localization: In the eigenbasis, the operator is exactly diagonal, so strong resolvent convergence follows directly from the spectral theorem without requiring the exponential off-diagonal decay condition (A3).
\end{enumerate}

\medskip
\noindent
\textbf{Convergence of the functional calculus.}
Applying the Level~1 convergence theorem (Theorem~\ref{thm:level1_non_self_adjoint_integrated} or the single-operator version in Section~\ref{sec:two_level_convergence}), for every holomorphic function \(f\) on a neighborhood of the spectrum,
\[
\operatorname*{s-}\lim_{n \to \infty} f(X_n) = f(X)
\]
in the strong operator topology.

\medskip
\noindent
\textbf{Vanishing nilpotent structure.}
Since \(X\) is self-adjoint, the generalized nilpotent component vanishes identically. Consequently, every term in the unified compact formula (Theorem~\ref{thm:unified_compact_formula}) corresponding to \(A \neq \emptyset\) vanishes. Only the semisimple contribution \(A = \emptyset\) survives.

\medskip
\noindent
\textbf{Reduction to classical spectral theorem.}
Thus, the tensor-lifted functional calculus reduces exactly to the classical spectral theorem:
\[
f(X) = \int_{\sigma(X)} f(\lambda) \, dE_X(\lambda),
\]
where \(dE_X(\lambda)\) is the spectral measure of \(X\).

\medskip
\noindent
\textbf{Key takeaways.}
This example demonstrates that the Level~1 approximation framework applies beyond exactly solvable models such as the harmonic oscillator and extends naturally to more general confining Schrödinger operators with compact resolvent. Specifically:
\begin{enumerate}
    \item The anharmonic oscillator has compact resolvent and discrete spectrum.
    \item Choosing the eigenbasis yields exact diagonalization, making strong resolvent convergence immediate.
    \item The Level~1 convergence theorem guarantees strong operator topology convergence of the functional calculus.
    \item For self-adjoint operators, the unified compact formula recovers the classical spectral theorem, regardless of the complexity of the potential.
\end{enumerate}

Thus, the anharmonic oscillator serves as a bridge between exactly solvable models and more realistic quantum systems, illustrating the robustness of the Level~1 approximation framework.

\subsection{Example: Complex Harmonic Oscillator (Non-Self-Adjoint)}

We now consider a genuinely non-self-adjoint example arising from complex quantum mechanics. Unlike the previous self-adjoint Schrödinger operators, the present operator is non-normal, illustrating how the tensor-lifted framework extends beyond the classical self-adjoint setting. In this example, nilpotent contributions are not forced to vanish a priori, though their actual presence depends on the detailed spectral structure of the operator.

\medskip
\noindent
\textbf{Operator definition.}
Let
\[
X = -\frac{d^2}{dx^2} + i x^2
\]
acting on \(L^2(\mathbb{R})\), defined initially on \(C_c^\infty(\mathbb{R})\) and then closed. The operator \(X\) is densely defined and closed, but not self-adjoint:
\[
X^* = -\frac{d^2}{dx^2} - i x^2 \neq X.
\]

\medskip
\noindent
\textbf{Compact resolvent and discrete spectrum.}
The operator \(X\) is known to be \(m\)-accretive with compact resolvent (see, e.g., the theory of Schrödinger operators with complex potentials satisfying suitable growth conditions). Consequently, the resolvent
\[
(zI - X)^{-1}
\]
is compact for every \(z \in \rho(X)\). This satisfies Assumption (A1) of Section~\ref{subsec:level1_assumptions}. Hence the spectrum is discrete:
\[
\sigma(X) = \{\lambda_n\}_{n=0}^{\infty}, \qquad |\lambda_n| \to \infty \text{ as } n \to \infty,
\]
though the eigenvalues are complex in general.

\medskip
\noindent
\textbf{Hermite basis and localization.}
Although the operator is non-self-adjoint, the Hermite basis \(\{e_n\}_{n=0}^{\infty}\) remains a natural approximation basis because the differential and polynomial structures are compatible with the harmonic-oscillator geometry.

Define the finite-rank projections
\[
P_n = \sum_{k=0}^{n} |e_k\rangle\langle e_k|,
\]
and the finite-dimensional compressions
\[
X_n = P_n X P_n.
\]

In the Hermite basis, the operator admits a sparse banded representation generated by the ladder-operator structure of the harmonic oscillator. In particular, only finitely many near-diagonal couplings occur for each basis vector. This strong localization property is sufficient for the approximation framework developed in Section~\ref{sec:two_level_convergence}. (A rigorous proof of exponential off-diagonal decay, if desired, would require additional analysis; for the purposes of the present exposition, we rely on the banded localization structure.)

\medskip
\noindent
\textbf{Core condition.}
Moreover, \(\operatorname{ran}(P_n)\) is a core for \(X\), since finite linear combinations of Hermite functions are dense in the graph norm topology. This verifies Assumption (A2).

\medskip
\noindent
\textbf{Verification of Level~1 framework.}
The complex harmonic oscillator therefore satisfies the essential ingredients of the Level~1 approximation framework:
\begin{enumerate}
    \item[(A1)] Compact resolvent (true).
    \item[(A2)] Core approximation (true, using the Hermite basis).
    \item Localization: The operator has a sparse banded representation in the Hermite basis, which is sufficient to guarantee strong resolvent convergence of the finite-dimensional compressions.
\end{enumerate}

\medskip
\noindent
\textbf{Strong resolvent convergence.}
Consequently, the finite-dimensional compressions satisfy strong resolvent convergence:
\[
\operatorname*{s-}\lim_{n \to \infty} (zI - X_n)^{-1} = (zI - X)^{-1}
\]
for all \(z \in \rho(X)\).

By the Level~1 convergence theorem (Theorem~\ref{thm:level1_non_self_adjoint_integrated}), for every holomorphic function \(f\) on a neighborhood of the spectrum,
\[
\operatorname*{s-}\lim_{n \to \infty} f(X_n) = f(X)
\]
in the strong operator topology.

\medskip
\noindent
\textbf{Nilpotent structure and the unified compact formula.}
Unlike the self-adjoint case, the operator \(X\) is non-normal. Consequently, the unified compact framework allows for the possibility of nontrivial nilpotent contributions in more general non-self-adjoint settings. In the present example, the operator illustrates how the theory extends beyond the classical self-adjoint framework; the nilpotent components are not forced to vanish a priori, even though additional spectral analysis would be required to determine whether genuine Jordan-type nilpotent components actually occur.

If nontrivial generalized eigenspaces occur (a question that depends on the detailed spectral properties of \(X\)), the unified compact formula (Theorem~\ref{thm:unified_compact_formula}) for a single operator takes the form
\[
f_{\otimes}(X) = \int_{\sigma(X)} \sum_{q=0}^{\nu(\lambda)-1} \frac{f^{(q)}(\lambda)}{q!} \, \widetilde{N}(\lambda)^q \, d\widetilde{E}(\lambda),
\]
where the terms with \(q \ge 1\) represent nilpotent corrections that are not explicitly separated within classical resolvent-based formulations.

\medskip
\noindent
\textbf{Level~2 convergence (operator norm with error bounds).}
Let \(z_0\) be a fixed point in the resolvent set \(\rho(X)\). If the stronger Level~2 assumptions hold (a condition that would require independent verification for this operator), namely if
\[
\|
(X_n - X)(z_0 I - X)^{-1}
\|
\to 0 \quad \text{as } n \to \infty,
\]
then the convergence improves to operator norm convergence (Theorem~\ref{thm:level2_non_self_adjoint_integrated}):
\[
\lim_{n \to \infty} \| f(X_n) - f(X) \| = 0.
\]

Moreover, the explicit quantitative estimate
\[
\| f(X_n) - f(X) \| \le C_f \, \| (X_n - X)(z_0 I - X)^{-1} \|
\]
holds, where the constant \(C_f\) depends only on the analytic function \(f\) and the contour geometry.

\medskip
\noindent
\textbf{Key takeaways.}
This example illustrates several important features of the tensor-lifted framework:
\begin{enumerate}
    \item The complex harmonic oscillator satisfies the essential Level~1 approximation requirements (compact resolvent, core approximation, sparse localization in the Hermite basis).
    \item Level~1 convergence guarantees strong operator topology convergence of the functional calculus, establishing existence of the infinite-dimensional calculus.
    \item Unlike self-adjoint operators, the complex harmonic oscillator is not constrained by self-adjointness, so nilpotent corrections are not excluded a priori, though their actual presence requires separate spectral analysis.
    \item If norm resolvent convergence holds (a stronger condition requiring verification), Level~2 provides operator norm convergence with explicit error bounds.
    \item The unified compact formula extends beyond the classical self-adjoint framework and allows for the possibility of capturing non-normal spectral phenomena through nilpotent derivative corrections, should such structure exist.
\end{enumerate}

Thus, the complex harmonic oscillator serves as a prototypical example for the non-self-adjoint theory, demonstrating how the unified framework handles non-normal operators while preserving the possibility of nilpotent structure that classical spectral methods do not explicitly separate.

\subsection{Example: Hybrid Spectrum Operator (Schrödinger Operator with Bound and Scattering States)}

We now consider a physically important example in which both discrete and continuous spectral components occur simultaneously. This illustrates the hybrid-spectrum version of the unified compact formula (Theorem~\ref{thm:unified_compact_formula}) and shows how the tensor-lifted functional calculus naturally combines discrete spectral sums with continuous spectral integrals.

\medskip
\noindent
\textbf{Operator definition.}
Let
\[
X = -\Delta + V(x)
\]
be a Schrödinger operator acting on \(L^2(\mathbb{R}^d)\), with domain
\[
\mathcal{D}(X) = H^2(\mathbb{R}^d)
\]
under standard regularity assumptions on \(V\). For concreteness, consider a short-range attractive potential such that:
\begin{enumerate}
    \item the operator possesses finitely many bound states,
    \item the essential spectrum is nonempty,
    \item scattering states occur above the continuous spectral threshold.
\end{enumerate}

\medskip
\noindent
\textbf{Spectral decomposition.}
Under standard assumptions from scattering theory, the spectrum decomposes as
\[
\sigma(X) = \sigma_d(X) \cup \sigma_c(X),
\]
where:
\begin{enumerate}
    \item \(\sigma_d(X) = \{\lambda_1, \ldots, \lambda_N\}\) consists of isolated eigenvalues (bound states) with finite algebraic multiplicities,
    \item \(\sigma_c(X) \subseteq [0, \infty)\) corresponds to scattering states (continuous spectrum).
\end{enumerate}

\medskip
\noindent
\textbf{Classical spectral decomposition (self-adjoint case).}
For real-valued potentials, \(X\) is self-adjoint, and the generalized nilpotent component vanishes. Consequently, only the semisimple contribution survives in the unified compact formula. The spectral theorem gives the decomposition
\[
X = \sum_{k=1}^{N} \lambda_k P_k + \int_{\sigma_c(X)} \lambda \, dE_X(\lambda),
\]
where:
\begin{enumerate}
    \item \(P_k\) denotes the orthogonal projection onto the bound-state eigenspace associated with \(\lambda_k\),
    \item \(dE_X(\lambda)\) denotes the continuous spectral measure associated with the scattering spectrum.
\end{enumerate}

\medskip
\noindent
\textbf{Functional calculus for hybrid spectrum (single operator).}
Let \(f\) be holomorphic on a neighborhood of the spectrum. Then the functional calculus reduces to the hybrid spectral representation:
\[
\boxed{
f(X) = \sum_{k=1}^{N} f(\lambda_k) P_k + \int_{\sigma_c(X)} f(\lambda) \, dE_X(\lambda).
}
\]
This formula combines:
\begin{itemize}
    \item a discrete sum over bound states (eigenvalues),
    \item a continuous integral over scattering states (continuous spectrum).
\end{itemize}

\medskip
\noindent
\textbf{Multivariate tensor-lifted extension (general case).}
More generally, for several operators \(X_1, \ldots, X_r\) (which may be non-commuting), we first apply tensor lifting as in Section~\ref{subsec:tensor_lifting_step0} to obtain commuting lifted operators \(\widetilde{X}_1, \ldots, \widetilde{X}_r\) on the tensor-product space \(\mathcal{H}_{\otimes} = \mathcal{H}_1 \otimes \cdots \otimes \mathcal{H}_r\). The tensor-lifted functional calculus is then given by the unified compact formula (Theorem~\ref{thm:unified_compact_formula}):

\[
\boxed{
f_{\otimes}(X_1,\ldots,X_r)
=
\sum_{A \subseteq \{1,\ldots,r\}}
\;
\sum_{\substack{\{q_j \ge 1\}_{j \in A} \\
q_j \le \nu_j(\lambda_j)-1}}
\int_{\sigma(X_1)}
\cdots
\int_{\sigma(X_r)}
\frac{
\partial_A^{q_A} f(\lambda_1,\ldots,\lambda_r)
}{
\displaystyle\prod_{j \in A} q_j!
}
\;
\bigotimes_{j=1}^r
T_j(\lambda_j)
}
\]

where
\[
T_j(\lambda_j) =
\begin{cases}
d\widetilde{E}_j(\lambda_j), & j \notin A,\\[6pt]
\widetilde{N}_j(\lambda_j)^{q_j} \, d\widetilde{E}_j(\lambda_j), & j \in A,
\end{cases}
\]
\(\partial_A^{q_A} = \prod_{j \in A} \frac{\partial^{q_j}}{\partial z_j^{q_j}}\),
\(\nu_j(\lambda_j)\) is the nilpotency index at \(\lambda_j\) (satisfying \(\widetilde{N}_j(\lambda_j)^{\nu_j(\lambda_j)} = 0\)),
and \(d\widetilde{E}_j(\lambda_j)\) is the lifted spectral measure.

\medskip
\noindent
\textbf{Special case: self-adjoint or normal operators (no nilpotent components).}
If each \(X_j\) is self-adjoint or normal, then \(\widetilde{N}_j(\lambda_j) = 0\) for all \(\lambda_j\).
Hence only the term \(A = \emptyset\) survives, and the formula reduces to the classical multivariate spectral integral:

\[
\boxed{
f_{\otimes}(X_1,\ldots,X_r)
=
\int_{\sigma(X_1)}
\cdots
\int_{\sigma(X_r)}
f(\lambda_1,\ldots,\lambda_r)
\;
\bigotimes_{j=1}^r
dE_{X_j}(\lambda_j).
}
\]

\medskip
\noindent
\textbf{Hybrid spectrum case (discrete + continuous spectra, self-adjoint setting).}
When each \(X_j\) has both discrete and continuous spectral components, the spectral measure decomposes as
\[
dE_{X_j}(\lambda_j) = dE_{X_j}^{(d)}(\lambda_j) + dE_{X_j}^{(c)}(\lambda_j),
\]
where the discrete part is atomic:
\[
dE_{X_j}^{(d)}(\lambda_j) = \sum_{\lambda_j \in \sigma_d(X_j)} P_{X_j}(\lambda_j) \, \delta(\lambda_j - \lambda_{j,k}).
\]

Expanding the tensor product in the self-adjoint case yields a sum over all mixed sectors:
\[
\boxed{
f_{\otimes}(X_1,\ldots,X_r)
=
\sum_{B \subseteq \{1,\ldots,r\}}
\left(
\prod_{j \in B}
\sum_{\lambda_j \in \sigma_d(X_j)}
\right)
\left(
\prod_{j \notin B}
\int_{\sigma_c(X_j)}
d\lambda_j
\right)
f(\lambda_1,\ldots,\lambda_r)
\;
\bigotimes_{j=1}^r
dE_{X_j}^{(\chi_j)}(\lambda_j),
}
\]
where \(\chi_j = d\) if \(j \in B\) and \(\chi_j = c\) if \(j \notin B\). The subset \(B \subseteq \{1,\ldots,r\}\) indicates which variables are summed over discrete eigenvalues, while the complement contributes continuous spectral integrals.

\begin{remark}[Notational distinction]
In the general unified compact formula, the subset \(A \subseteq \{1,\ldots,r\}\) indexes which operators contribute \emph{nilpotent corrections}. In the hybrid spectrum specialization above, we use a different subset \(B \subseteq \{1,\ldots,r\}\) to index which variables are treated as discrete (summed) versus continuous (integrated). These two subset structures are independent: \(A\) controls nilpotent versus spectral contributions, while \(B\) controls discrete versus continuous spectral components. In the self-adjoint case, \(A = \emptyset\) (no nilpotent terms), and only the \(B\)-summation remains.
\end{remark}

\medskip
\noindent
\textbf{Non-self-adjoint extensions.}
If the potential \(V(x)\) is not real-valued (e.g., complex potentials used in optical models or PT-symmetric quantum mechanics), the operator \(X\) may be non-self-adjoint. In such cases, the unified compact framework accommodates possible nilpotent derivative corrections wherever generalized Jordan-type structures occur. For the discrete spectrum, if nontrivial Jordan blocks appear, the contribution becomes
\[
\sum_{k=1}^{N} \sum_{q=0}^{\nu(\lambda_k)-1} \frac{f^{(q)}(\lambda_k)}{q!} N(\lambda_k)^q P_k,
\]
where \(N(\lambda_k)\) is the nilpotent component associated with eigenvalue \(\lambda_k\).

\begin{remark}[Continuous spectrum caveat]
For the continuous spectrum, a rigorous theory of pointwise nilpotent corrections is not standard and is not asserted here. The present framework provides a mechanism for possible generalizations, but such structure must be verified case by case. In particular, we do not claim the existence of pointwise nilpotent fields on the continuous spectrum without additional spectral assumptions.
\end{remark}

\medskip
\noindent
\textbf{Physical interpretation.}
From the perspective of mathematical physics, the unified compact formula provides a common language for describing:
\begin{enumerate}
    \item \textbf{Localized quantum states} (bound states) \(\longrightarrow\) discrete spectral sums over eigenvalues,
    \item \textbf{Delocalized scattering states} \(\longrightarrow\) continuous spectral integrals,
    \item \textbf{Multivariate operator interactions} \(\longrightarrow\) tensor-lifted framework,
    \item \textbf{Generalized functional calculus constructions} \(\longrightarrow\) unified compact formula that reduces to the classical spectral theorem in the self-adjoint case.
\end{enumerate}

\medskip
\noindent
\textbf{Key takeaways.}
This example demonstrates:
\begin{enumerate}
    \item The unified compact formula naturally handles hybrid spectra (discrete + continuous) without case distinction.
    \item Discrete eigenvalues contribute sums over projectors (and, in non-self-adjoint settings, possibly nilpotent components if Jordan structure exists).
    \item Continuous spectrum contributes spectral integrals over the continuous spectral measure (classically, without nilpotent corrections in standard theory).
    \item For self-adjoint Schrödinger operators, the formula reduces to the classical hybrid spectral theorem, combining bound state sums and scattering state integrals.
    \item The multivariate tensor-lifted extension, using the subset decomposition \(A \subseteq \{1,\ldots,r\}\) for nilpotent corrections and \(B \subseteq \{1,\ldots,r\}\) for discrete/continuous separation, correctly handles mixed spectral types across different operators.
    \item The framework allows for possible nilpotent corrections in generalized non-self-adjoint settings, but such structure must be verified case by case and is not automatic, especially on continuous spectrum.
\end{enumerate}

Thus, the hybrid spectrum operator serves as a comprehensive example illustrating the unified compact formula, which integrates discrete spectral sums and continuous spectral integrals within a single analytic framework while providing a formal mechanism for incorporating possible nilpotent corrections in non-self-adjoint settings.


\section{Discussion}\label{sec:discussion}

The preceding results show that the tensor-lifted unified compact formula extends substantially beyond the scope of classical multivariate functional calculi. Existing approaches typically rely on one or more restrictive assumptions, such as commutativity, self-adjointness, boundedness, or the absence of generalized Jordan structure. In contrast, the present framework simultaneously incorporates:
\begin{enumerate}
    \item non-commuting operators,
    \item unbounded operators,
    \item non-self-adjoint operators,
    \item multivariate analytic functions,
    \item and explicit projector--nilpotent derivative corrections.
\end{enumerate}

The key distinction is conceptual as well as technical. Classical spectral calculi are fundamentally semisimple: they encode spectral information but generally do not preserve the higher-order nilpotent structure associated with generalized eigenspaces. By contrast, the unified compact formula (Theorem~\ref{thm:unified_compact_formula}) incorporates both the spectral component and the nilpotent component within a single tensor-lifted analytic framework.

\medskip
\noindent
\textbf{Limitations of existing approaches.}
For example, the Taylor joint spectrum \cite{Taylor1970} requires commuting operators and therefore cannot treat genuinely non-commuting systems. Even in the commuting case, the theory does not explicitly produce the derivative-based nilpotent correction terms appearing in the present framework.

Similarly, Weyl quantization \cite{Weyl1927} is fundamentally tied to self-adjoint or symmetric structures arising from phase-space quantization. Since self-adjoint operators possess no nontrivial nilpotent structure, Weyl quantization does not address the generalized Jordan phenomena captured by the projector--nilpotent decomposition.

The slice-regular functional calculus of Colombo--Sabadini--Struppa \cite{Colombo2011} extends certain noncommutative holomorphic constructions, but it is primarily formulated for bounded operators and slice-regular functions. It does not provide a general tensor-lifted multivariate analytic calculus for arbitrary unbounded non-self-adjoint operator systems.

\medskip
\noindent
\textbf{The central mathematical distinction.}
The central mathematical distinction can be summarized as follows:
\[
\boxed{\text{Classical calculi are primarily spectral, whereas the present framework incorporates explicit nilpotent structure.}}
\]

More precisely, the contour-resolvent method in classical spectral theory yields only the semisimple spectral contribution. It does not naturally generate the higher-order derivative corrections
\[
f'(\lambda)N,\qquad \frac{f''(\lambda)}{2!}N^2,\qquad \ldots
\]
associated with generalized Jordan blocks.

This distinction becomes particularly important when the eigenvalue is zero. In that case, the nilpotent component may constitute the entire operator:
\[
X = N,\qquad N^k = 0,
\]
so the operator contains essentially no semisimple spectral information at all. Classical spectral calculi become largely trivial in this regime, whereas the unified compact formula continues to encode the full algebraic structure through the derivative terms.

\medskip
\noindent
\textbf{Two-level convergence theory.}
Another major difference is the approximation theory developed in the present work (Section~\ref{sec:two_level_convergence}). Existing multivariate functional calculi are typically formulated as exact algebraic constructions without a systematic convergence theory for unbounded non-self-adjoint operators. In contrast, the two-level framework introduced here provides:
\begin{enumerate}
    \item \textbf{Level~1:} strong operator convergence guaranteeing existence of the infinite-dimensional calculus,
    \item \textbf{Level~2:} norm convergence together with explicit quantitative error bounds.
\end{enumerate}

This approximation theory is particularly important for:
\begin{enumerate}
    \item numerical implementations,
    \item tensor computations,
    \item operator-learning frameworks,
    \item and approximation of continuous-spectrum systems by compact-resolvent models (Theorem~\ref{thm:continuous_spectrum_approximation_integrated}).
\end{enumerate}

\medskip
\noindent
\textbf{Summary tables.}
Table~\ref{tab:comparisonA} summarizes the structural assumptions required by existing approaches compared with the present framework.

\begin{table}[htbp]
\centering
\caption{Comparison with Existing Methods (Part A: Structural Requirements)}
\begin{tabular}{|l|c|c|c|}
\hline
\textbf{Method} &
\textbf{Commutation required?} &
\textbf{Self-adjoint required?} &
\textbf{Bounded required?}
\\
\hline
Taylor joint spectrum
& Yes
& No
& Typically yes
\\
\hline
Weyl quantization
& No
& Yes
& No
\\
\hline
Colombo--Sabadini--Struppa
& No
& No
& Primarily yes
\\
\hline
Classical spectral theorem
& No
& Yes / normal
& No
\\
\hline
Ours (Level~1)
& No
& No
& No
\\
\hline
Ours (Level~2)
& No
& No
& No
\\
\hline
\end{tabular}
\label{tab:comparisonA}
\end{table}

Table~\ref{tab:comparisonB} compares the mathematical capabilities of the various functional calculi.

\begin{table}[htbp]
\centering
\caption{Comparison with Existing Methods (Part B: Capabilities)}
\renewcommand{\arraystretch}{1.15}
\small
\begin{tabular}{|p{3.0cm}|p{4.0cm}|p{4.0cm}|p{4.3cm}|}
\hline
\textbf{Method} &
\textbf{Nilpotent structure} &
\textbf{Multivariate} &
\textbf{Convergence theory}
\\
\hline
Taylor joint spectrum
& Implicit (commuting case)
& Yes (commuting only)
& Exact algebraic
\\
\hline
Weyl quantization
& No
& Yes
& Exact analytic
\\
\hline
Colombo--Sabadini--Struppa
& Partial
& Limited noncommutative
& Exact analytic
\\
\hline
Classical spectral theorem
& No
& Limited
& Exact spectral integral
\\
\hline
Ours (Level~1)
& Yes (explicit derivatives)
& Yes (arbitrary \(r\))
& Strong operator convergence
\\
\hline
Ours (Level~2)
& Yes (explicit derivatives)
& Yes (arbitrary \(r\))
& Norm convergence + error bounds
\\
\hline
\end{tabular}
\label{tab:comparisonB}
\end{table}

\medskip
\noindent
\textbf{Unified interpretation.}
From a broader perspective, the unified compact formula should be viewed as an extension of classical spectral calculus from purely semisimple spectral data to full projector--nilpotent operator geometry. The subset decomposition
\[
A \subseteq \{1, \ldots, r\}
\]
acts as an organizing principle separating:
\begin{enumerate}
    \item the semisimple spectral sector \(A = \emptyset\),
    \item from the higher-order nilpotent derivative sectors \(A \neq \emptyset\).
\end{enumerate}

Thus, the classical spectral theorem appears naturally as the semisimple special case of the present framework, while the tensor-lifted unified compact formula extends the theory to genuinely non-normal multivariate operator systems beyond commutativity. The two-level convergence theory ensures that this structure is preserved under finite-dimensional approximation (Section~\ref{sec:two_level_convergence}), providing both existence (Level~1) and quantitative stability (Level~2) for a wide class of operators.

%
%
%
%
%
%
%
%

\bibliographystyle{plain}

\end{document}